\newtheorem{theorem}{Theorem}[section]
\newtheorem*{theorem*}{Theorem}
\newtheorem*{conjecture*}{Conjecture}
\newtheorem*{question*}{Question}
\newtheorem*{guess*}{Guess}
\newtheorem*{problem*}{Problem}
\newtheorem{assumption}[theorem]{Assumption}
\newtheorem*{assumption*}{Assumption}
\newtheorem{lemma}[theorem]{Lemma}
\newtheorem*{lemma*}{Lemma}
\newtheorem{calculation}[theorem]{Calculation}
\newtheorem*{calculation*}{Calculation}
\newtheorem*{exercise*}{Exercise}
\newtheorem{proposition}[theorem]{Proposition}
\newtheorem*{proposition*}{Proposition}
\newtheorem{corollary}[theorem]{Corollary}
\newtheorem*{corollary*}{Corollary}
\theoremstyle{definition}
\newtheorem*{definition*}{Definition}
\newtheorem{remark}[theorem]{Remark}
\newtheorem*{example*}{Example}
\newtheorem*{examples*}{Examples}
\newcommand{\pres}[1]{\left\langle #1 \right\rangle}
\renewcommand{\bar}{\overline}
\renewcommand{\AA}{\mathbb{A}}
\newcommand{\FF}{\mathbb{F}}
\newcommand{\F}{\mathbb{F}}
\newcommand{\GG}{\mathbb{G}}
\newcommand{\PP}{\mathbb{P}}
\newcommand{\Q}{\mathbb{Q}}
\newcommand{\R}{\mathbb{R}}
\newcommand{\VV}{\mathbb{V}}
\newcommand{\ZZ}{\mathbb{Z}}
\newcommand{\Z}{\mathbb{Z}}
\newcommand{\Cc}{\mathcal{C}}
\newcommand{\Ec}{\mathcal{E}}
\newcommand{\Fc}{\mathcal{F}}
\newcommand{\Ic}{\mathcal{I}}
\newcommand{\Lc}{\mathcal{L}}
\renewcommand{\Mc}{\mathcal{M}}
\newcommand{\Oc}{\mathcal{O}}
\newcommand{\Vc}{\mathcal{V}}
\newcommand{\Xc}{\mathcal{X}}
\newcommand{\Yc}{\mathcal{Y}}
\newcommand{\Zc}{\mathcal{Z}}
\newcommand{\bfrak}{\mathfrak{b}}
\newcommand{\gf}{\mathfrak{g}}
\newcommand{\nf}{\mathfrak{n}}
\newcommand{\tf}{\mathfrak{t}}
\newcommand{\Xf}{\mathfrak{X}}
\newcommand{\onto}{\twoheadrightarrow}
\newcommand{\into}{\hookrightarrow}
\newcommand{\isomto}{\xrightarrow{\sim}}
\newcommand{\tensor}{\otimes}
\DeclareMathAlphabet{\mathpzc}{OT1}{pzc}{m}{it}
\newcommand{\red}{\operatorname{red}}
\newcommand{\psupp}{\operatorname{psupp}}
\newcommand{\Ext}{\operatorname{Ext}}
\DeclareMathOperator{\Homf}{\mathcal{H}\! \mathit{om}}
\newcommand{\Tor}{\operatorname{Tor}}
\newcommand{\ind}{\operatorname{ind}}
\newcommand{\Spec}{\operatorname{Spec}}
\newcommand{\tr}{\operatorname{tr}}
\newcommand{\Sym}{\operatorname{Sym}}
\newcommand{\ch}{\operatorname{char}}
\newcommand{\St}{\operatorname{St}}
\newcommand{\coker}{\operatorname{coker}}
\newcommand{\diag}{\operatorname{diag}}
\newcommand{\Cl}{\mathrm{Cl}}
\newcommand{\codim}{\mathrm{codim}}
\newcommand{\GL}{\mathrm{GL}}
\newcommand{\SL}{\mathrm{SL}}
\DeclareMathOperator{\Pic}{Pic}
\DeclareMathOperator{\Div}{Div}
\newcommand{\reg}{\mathrm{reg}}
\title{Singularities of Steinberg deformation rings}
\author{Daniel Funck \and Jack Shotton}
\date{\today}
\newcommand{\spl}{\mathfrak{sl}}
\begin{document}
\begin{abstract}
Let $l$ and $p$ be distinct primes, let $F$ be a local field with residue field of characteristic $p$, and let $\Xf$ be the irreducible component of the moduli space of Langlands parameters for $GL_3$ over $\Z_l$ corresponding to parameters of Steinberg type. We show that $\Xf$ is Cohen--Macaulay and compute explicit equations for it. We also compute the Weil divisor class group of the special fibre of $\Xf$, motivated by work of Manning for $GL_2$. Our methods involve the calculation of the cohomology of certain vector bundles on the flag variety, and build on work of Snowden, Vilonen--Xue, and Ngo. 
\end{abstract}
\maketitle

\section{Introduction}
\label{sec:intro}

Let $F$ be a local field with residue field $\F_q$ of order $q$, a power of $p$. Let $l$ be a prime distinct from $p$,
and let $E/\Q_l$ be a finite extension with ring of integers $\Oc$ and residue field $\F$. The (framed) moduli space of
 Langlands parameters for $GL_n$ over $\Oc$ parametrizes continuous homomorphisms \[\rho :
W^0_F \to GL_n(R)\] for $\Oc$-algebras $R$, where $W_F^0$ is a certain choice of discretization of the Weil group of
$F$. It has been intensely studied in recent years due to its role in modularity lifting theorems (for example,
\cite{Kisin2009-ModuliFFGSandModularity} and \cite{ClozelHarrisTaylor2008-Automorphy}) and, more recently, in
geometrization of the local Langlands conjecture as in \cite{Zhu20}, \cite{farguesGeometrizationLocalLanglands2021},
\cite{hellmannDerivedCategoryIwahoriHecke2020}, and \cite{ben-zviCoherentSpringerTheory2024}; the analogous object for
arbitrary reductive groups was constructed in \cite{DHKM20}, \cite{farguesGeometrizationLocalLanglands2021}, and
\cite{Zhu20}.

There is a very explicit presentation for the clopen subscheme of \emph{tame} parameters: it is isomorphic to
the $\Oc$-scheme $\Mc = \Mc_{n,q}$ whose $R$-points, for $\Oc$-algebras $R$, are given by 
\[\Mc_{n,q}(R) = \{(\Phi, \Sigma) \in GL_n(R) \times GL_n(R) : \Phi\Sigma\Phi^{-1} = \Sigma^q\}.\] This has a number of
pleasant geometric properties: it is an affine complete intersection over $\Oc$, equidimensional of relative dimension
$n^2$, it is flat over $\Oc$, and it is reduced (see \cite{DHKM20} section 2).

The geometric irreducible components of $\Mc$ are in bijection with $q$-power stable conjugacy classes in $GL_n(\bar{E})$.
In applications of the Taylor--Wiles--Kisin method, as well as in the analysis of the functors conjectured in
\cite{Zhu20} and \cite{farguesGeometrizationLocalLanglands2021}, it is natural to consider various
unions of irreducible components of $\Mc$ obtained by restricting the conjugacy class of $\Sigma$ (the `type'). For $n =
2$, very explicit equations for these fixed-type deformation rings were constructed by the second author in \cite{Shotton16,hupaskunas2019}. As a result, they were shown to be Cohen--Macaulay. 

In this article we consider the \emph{Steinberg component} $\Xc_{\St}$ defined to be the Zariski closure of the
open subset of $\Mc(\bar{E})$ on which $\Sigma$ is a regular unipotent matrix: 
\[\Xc_{St} = \overline{\{(\Phi, \Sigma) \in \Mc(\bar{E}) : \Sigma \text{ regular unipotent}\}}.\]
Our main theorem is then:
\begin{theorem} \label{thm:main} Let $n = 2$ or $3$ and suppose that $l > n$ and that \[q \equiv 1 \mod l.\]

Then $\Xc_{\St}$ is Cohen-Macaulay and $\Xc_{\St, \F}$ is normal and reduced.
\end{theorem}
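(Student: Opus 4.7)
The plan is to realize $\Xc_{\St}$ as the image of a proper birational resolution $\pi : \widetilde{\Xc} \to \Xc_{\St}$ built from the flag variety $\flag$ of $\GL_n$, and to transfer geometric properties from $\widetilde{\Xc}$ to $\Xc_{\St}$ by computing $R\pi_*\Oc_{\widetilde{\Xc}}$ via cohomology on $\flag$. For the resolution, observe that if $\Sigma$ is regular unipotent then there is a unique Borel $B$ containing $\Sigma$, and since $\Sigma^q$ lies in this same $B$, the equation $\Phi\Sigma\Phi^{-1}=\Sigma^q$ forces $\Phi\in B$. Accordingly, I would define $\widetilde{\Xc}$ to be the closure in $\GL_n\times\GL_n\times\flag$ of the locus of triples $(\Phi,\Sigma,B)$ with $\Sigma$ regular unipotent and $B\supset\Sigma$. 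By $\GL_n$-equivariance, $\widetilde{\Xc}$ fibres over $\flag$, so it suffices to understand a single fibre. For $n=2,3$ I would write down this fibre as an explicit subscheme of $B_0\times U_{B_0}$ cut out by the relation $\Phi\Sigma\Phi^{-1}=\Sigma^q$ together with ``twisted torus'' conditions $\alpha(\bar\Phi)=q$ on the image $\bar\Phi$ of $\Phi$ in $T=B_0/U_{B_0}$ (one for each simple root $\alpha$, arising from the closure operation), verify that it is a flat complete intersection over $\Oc$ of the correct dimension, and conclude that $\widetilde{\Xc}$ itself is Cohen--Macaulay and $\Oc$-flat. The projection $\pi:\widetilde{\Xc}\to\Xc_{\St}$ is then proper and, by construction, an isomorphism on the regular unipotent locus, hence birational.

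The second step is the cohomological computation. Factoring $\pi$ through the affine morphism $p:\widetilde{\Xc}\to\flag$, the sheaf $p_*\Oc_{\widetilde{\Xc}}$ is the $\GL_n$-equivariant quasi-coherent sheaf on $\flag$ corresponding to the $B_0$-representation on the coordinate ring of the fibre of $p$ over $B_0$. Using positive-characteristic Borel--Weil--Bott for $\GL_n$ (valid because $l>n$) together with a direct weight-by-weight analysis (tractable because $\dim U_{B_0}\le 3$), I would establish $R^i\pi_*\Oc_{\widetilde{\Xc}}=0$ for $i>0$ and $\pi_*\Oc_{\widetilde{\Xc}}=\Oc_{\Xc_{\St}}$. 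Together, these imply that $\Xc_{\St}$ is normal and, via Grothendieck duality applied to the Cohen--Macaulay $\widetilde{\Xc}$, that $\Xc_{\St}$ is Cohen--Macaulay. The special-fibre statements then follow: Cohen--Macaulayness combined with $\Oc$-flatness gives that $\Xc_{\St,\F}$ is Cohen--Macaulay, so by Serre's criterion normality reduces to regularity in codimension one, which is checkable on $\widetilde{\Xc}_{\F}$; reducedness follows from normality.

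The hard part will be the Borel--Weil--Bott vanishing: one must correctly identify the (typically non-dominant) weights appearing in $p_*\Oc_{\widetilde{\Xc}}$ and verify vanishing of their higher cohomology on $\flag$ in characteristic $l$. The hypotheses $l>n$ and $q\equiv 1\pmod l$ are crucial here, the first so that Kempf vanishing and Borel--Weil--Bott apply in the needed range for $\GL_n$, and the second so that the twisted commutation relation specialises modulo $l$ to ordinary commutation, bringing the mod-$l$ fibre within reach of the Springer-theoretic techniques of Ngo and Vilonen--Xue.
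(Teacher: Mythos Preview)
Your approach is the paper's: resolve $\Xc_{\St}$ by the incidence variety $\widetilde{\Xc}$ of triples $(\Phi,\Sigma,B)$, realize $\widetilde{\Xc}$ as a local-complete-intersection fibre bundle over $\flag$, and descend via cohomology on $\flag$. Two substantive ingredients are missing, however. First, the equality $\pi_*\Oc_{\widetilde{\Xc}} = \Oc_{\Xc_{\St}}$ is not a Borel--Weil--Bott statement: since $\Xc_{\St}$ is defined only as a Zariski closure, what must be shown is that $\F[\gf^2] \to H^0(\widetilde{\Xc}_\F,\Oc)$ is surjective. The paper does this (following Snowden) by a Koszul computation of $\Tor_0^S(H^0(\widetilde{\Xc}_\F,\Oc),\F)$, which reduces to the vanishing of $H^i(\flag,\Lambda^i[\bfrak\oplus\bfrak])$ for $i>0$ --- a separate and more delicate calculation than the $R^i\pi_*\Oc$ vanishing. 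Second, $R^i\pi_*\Oc=0$ plus Grothendieck duality does not by itself give Cohen--Macaulayness of the target; one also needs $R^i\pi_*\omega_{\widetilde{\Xc}}=0$ for $i>0$ (this is a hypothesis in the relevant lemma of Snowden). The paper obtains this by identifying $\omega_Y \cong \Oc_Y(\rho)$ via adjunction and rerunning the vanishing arguments with a $\rho$-twist.

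Two smaller points. The paper works entirely on the special fibre --- where $q\equiv 1$ makes the auxiliary space $Z_\F$ an honest \emph{vector} bundle $\nf^2\to\flag$, enabling the Koszul argument --- and then lifts to $\Oc$ afterwards by a short flatness argument; working integrally as you propose is possible but obscures this simplification. And ``Borel--Weil--Bott valid because $l>n$'' is too quick: the relevant weights are those of $\Lambda^j[\bfrak\oplus\bfrak]$ and its twists, and one must verify case by case that they lie in (a $W$-translate of) the closure of the fundamental alcove. This is where the bulk of the paper's effort goes, and is why the method does not readily extend to $n\ge 4$.
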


Under the same hypotheses, we also obtain a complete set of equations for $\Xc_{\St}$ as a closed subscheme of $GL_n \times GL_n$; \emph{a priori}, it is defined only as a Zariski closure. See Corollaries~\ref{cor:Xc-eqns-sl2} and~\ref{cor:Xc-equations-sl3}.

When $n = 2$ this theorem (except the very last part) was essentially proved by the second author in \cite{Shotton16} by explicit
methods, but we reprove it here more geometrically. The calculations of \cite{Shotton16} were applied by Manning in \cite{manning} to compute the
Weil divisor class group of $\Xc_{\St, \F}$. This was combined with the Taylor--Wiles--Kisin patching method and a
certain self-duality argument to identify the `patched module' as an element of the class group and deduce multiplicity $2^k$ results for the mod $l$ cohomology of Shimura curves (and sets) associated to division
algebras ramified at $p$.

For $GL_3$ we prove
\begin{theorem}\label{thm:class-group} The Weil divisor class group of $\Xc_{\St, \F}$ is isomorphic to $\Z \times
\Z/3\Z$.
\end{theorem}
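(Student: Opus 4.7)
The plan is to compute $\Cl(\Xc_{\St, \F})$ via a Springer-type resolution $\pi: \widetilde{\Xc} \to \Xc_{\St, \F}$, together with the standard fact that for a resolution of singularities of a normal variety the class group is the Picard group of the resolution modulo the classes of exceptional divisors. This generalizes Manning's argument in the $\GL_2$ case.

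First, I would construct $\widetilde{\Xc}$ as the closed subscheme of $\Xc_{\St, \F} \times (\GL_3/B)$ parameterizing triples $(\Phi, \Sigma, B')$ such that $\Phi, \Sigma \in B'$ and $\Sigma$ is in the unipotent radical of $B'$. Since the Borel containing a regular unipotent is unique, the projection $\pi: \widetilde{\Xc} \to \Xc_{\St, \F}$ is an isomorphism over the open dense regular unipotent locus. Projection to the flag variety $\mathcal{B} = \GL_3/B$ realizes $\widetilde{\Xc}$ as a smooth fiber bundle (indeed an affine-space bundle by $\GL_3$-equivariance), from which I expect $\Pic(\widetilde{\Xc}) \cong \Pic(\mathcal{B}) \cong \Z\omega_1 \oplus \Z\omega_2$, generated by the two fundamental-weight line bundles.

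The exact sequence
\begin{equation*}
\bigoplus_i \Z \cdot [E_i] \xrightarrow{\;\alpha\;} \Pic(\widetilde{\Xc}) \longrightarrow \Cl(\Xc_{\St, \F}) \longrightarrow 0,
\end{equation*}
where the $E_i$ are the codimension-one $\pi$-exceptional divisors (i.e.\ the codimension-one subvarieties of $\widetilde{\Xc}$ that are contracted to codimension $\geq 2$ in $\Xc_{\St, \F}$), then reduces the problem to computing the matrix $\alpha$. The exceptional divisors sit over the strata where $\Sigma$ has Jordan type $[2,1]$ or $[1,1,1]$; after enumerating them and computing their classes via Schubert-cycle intersection theory on $\mathcal{B}$, I expect $\im(\alpha)$ to be a rank-one subgroup of $\Z^2$ whose saturation has index $3$, yielding $\Cl(\Xc_{\St, \F}) \cong \Z \oplus \Z/3\Z$. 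The $\Z/3\Z$ torsion then reflects $|\pi_1(\SL_3)| = 3$---that is, the index of the coroot lattice in the coweight lattice for $\SL_3$---paralleling Manning's $\Z/2\Z$ for $\GL_2$.

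The main obstacle is the explicit determination of $\alpha$. Concretely, I need to verify that the candidate $\widetilde{\Xc}$ is genuinely smooth; identify each codimension-one exceptional divisor over the subregular and trivial unipotent strata; and compute its class in $\Z\omega_1 \oplus \Z\omega_2$ via intersection theory on $\mathcal{B}$. The flag-variety cohomology calculations developed earlier in the paper, together with the explicit equations from Corollary~\ref{cor:Xc-equations-sl3}, should supply the needed input. If $\widetilde{\Xc}$ as defined above fails to be smooth, the resolution must be refined (for instance by further blow-ups along the non-regular strata), and the effect on the generators and on $\alpha$ tracked carefully through the modification.
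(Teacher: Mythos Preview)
Your outline has the right starting point --- the paper uses exactly the same Springer-type variety $Y = \widetilde{\Xc}$ fibred over the flag variety --- but the key structural claim you rely on is false, and this is precisely the difficulty the paper has to work around.

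The fibre of $\widetilde{\Xc} \to \mathcal{B}$ is \emph{not} an affine space. Over $\F$ the relation $\Phi\Sigma\Phi^{-1} = \Sigma^q$ becomes (after writing $\Phi = aI + M$, $\Sigma = I + N$ with $M,N$ strictly upper triangular) the commutation $[M,N]=0$. Hence the fibre is $\GG_m \times C(\nf)$ with $C(\nf) = \{(M,N)\in \nf^2 : [M,N]=0\}$, the affine hypersurface $af - cd = 0$ in $\AA^6$; this is the cone on a smooth quadric and is singular. So $\widetilde{\Xc}$ is not smooth, $\Pic(\widetilde{\Xc}) \ne \Cl(\widetilde{\Xc})$, and your exact sequence with $\Pic(\widetilde{\Xc})$ does not compute $\Cl(X)$. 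Refining to a genuine resolution would introduce further exceptional divisors whose classes you would then have to control, and there is no evident clean way to do this. (Incidentally, Manning's $n=2$ computation gives $\Cl \cong \Z$, not $\Z/2\Z$; there is no torsion in that case.)

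The paper avoids resolving $Y$ altogether. Writing $X$ for the reduced special fibre with the $\GG_m$ stripped off, one checks that the locus in $X$ where neither $M$ nor $N$ is regular has codimension $2$, so $\Cl(X) = \Cl(U)$ for its complement $U$. Over $U$ the map $f$ is already an isomorphism onto $V \subset Y$, and $V \to F$ is a fibre bundle with fibre $C(\nf)^{\reg} \cong (\AA^2\setminus\{0\}) \times \GG_m \times \AA^2$. Fossum's exact sequence
\[
\Gamma(C(\nf)^{\reg}_K,\Oc^\times)/K^\times \xrightarrow{\;\delta\;} \Cl(F) \longrightarrow \Cl(V) \longrightarrow \Cl(C(\nf)^{\reg}_K) \longrightarrow 0
\]
then does the work: the right-hand term vanishes, $\Cl(F) = X^*(T) \cong \Z^2$, and the units of the fibre are generated by $\lambda = c/a = f/d$, a $T$-eigenfunction of weight $3(L_1+L_3)$. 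Thus $\Cl(X) \cong X^*(T)/\langle 3(L_1+L_3)\rangle \cong \Z \times \Z/3\Z$. The single relation $3(L_1+L_3)$ plays the role of your ``$\im(\alpha)$'', but it is extracted from the units of the generic fibre rather than from exceptional divisors of a smooth resolution.
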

Our methods also give a new method to calculate the Weil divisor class group when $n = 2$, which Manning did using toric
geometry; for $n = 3$ the variety is no longer toric. 

Unfortunately, and unlike for $n = 2$, the expected self-duality property does not specify a unique element of the class group (rather, there are three possibilities). We therefore do not make a precise conjecture for what the patched module should be, nor a multiplicity conjecture. We also note that there are a number of other obstructions to carrying out Manning's argument in the case $n = 3$; see section~\ref{sec:multiplicity-new} for further discussion.

The Steinberg component was previously considered by the first author \cite{funck2023geometry}, in which he showed
that $\Xc_{\St}$ is smooth when $l$ is a banal prime, meaning that $q^i \not\equiv 1 \bmod l$ for $1 \le i \le n$. In fact he proved an
analogous result for general $G$ when $q$ is ``considerate" towards $l$, a notion related to (but not identical with)
the generalisations of ``banal" considered in \cite{DHKM20}. This paper in some sense deals
with the other extreme, when $q \equiv 1 \bmod l$, sometimes called the quasi-banal case (if $l > n$).

We use the method developed by Snowden \cite{snowdenSingularitiesOrdinaryDeformation2018} to study ordinary deformation
rings. The idea is to consider a projective resolution of $\Xc_{\St}$ by the variety obtained by adding a choice of
Borel subgroup containing $\Phi$ and $\Sigma$. This variety fibres over the flag variety for $GL_3$, and the desired
properties of $\Xc_{\St}$ can then be proved by computing (enough of) the cohomology of certain vector bundles on the
flag variety. The methods for computing these cohomology groups are mostly due to Vilonen and Xue \cite{VilonenXue} and Ngo
\cite{Ngo18}, who were motivated by generalising Snowden's work in \cite{snowdenSingularitiesOrdinaryDeformation2018} on ordinary deformation rings; however, we have to go  beyond their calculations due to the subtleties of working in positive characteristic and also to obtain our results on explicit equations and multiplicities. The idea of applying this method in the $l \ne p$ setting is also original to this paper. 

Similar resolutions play a role in the ``generalised Springer theory" of \cite{Zhu20} and
\cite{ben-zviCoherentSpringerTheory2024} and it would be interesting to compare our results with Conjecture~4.5.1 of
\cite{Zhu20}; however, we will not do this here.

Here is an outline of the paper. In section~\ref{sec:resolution} we define the resolution we need and explain how
Theorem~\ref{thm:main} follows from properties that hold for its special fibre. In section~\ref{sec:vec-buns} we prove
these properties modulo calculations of the cohomology of vector bundles on the flag variety; we carry out these calculations in section~\ref{sec:cohom-calc}. In section~\ref{sec:equations} we obtain explicit equations for $\Xc_{\St}$. In
section~\ref{sec:class-group} we use our resolution to compute the Weil divisor class group of $\Xc_{\St, \F}$ and identify the
canonical class inside it. We identify the divisorial sheaves satisfying the expected self-duality, and compute their multiplicities.
\subsection{Acknowledgements}
\label{sec:ack}
We thank Jeffrey Manning for helpful correspondence about this research.

Much of this work was carried our during the first author's PhD, funded by the Engineering and Physical Sciences Research Council.
\section{Resolution of Steinberg deformation rings}
\label{sec:resolution}

We let $p$, $q$, $l$, $\Oc$, $E$ and $\F$ be as in the introduction, let $n \ge 1$ be an integer, and consider the
Steinberg component $\Xc_{\St}$ as defined in the introduction; equivalently, $\Xc_{\St}$ is the Zariski closure of
\[\{(\Phi, \Sigma) \in GL_n(\bar{E}) \times GL_n(\bar{E}) : \Phi \Sigma \Phi^{-1} =
\Sigma^q, \Sigma \text{ regular unipotent}\}.\]
inside $GL_{n,\Oc} \times GL_{n,\Oc}$. 

From now on, we assume:
\begin{assumption} \label{ass:nlq} We have $l > n$ and $q \equiv 1 \bmod l$.
\end{assumption}

On $\Xc_{St}$, the eigenvalues of $\Phi$ are in the ratio $1:q:\ldots:q^{n-1}$. Since $q \equiv 1 \mod l$ and $l \nmid
n$, there is then an isomorphism $\GG_m \times \Xc \isomto \Xc_{\St}$ where
\begin{align*}\Xc &= \{(\Phi, \Sigma) \in \Xc_{\St} : \tr(\Phi) = 1 + q + \ldots + q^{n-1}.\} \\
    &= \{(\Phi, \Sigma \in \Xc_{\St} : \ch_{\Phi}(T) = \prod_{i=1}^{n-1}(T-q^i))\}.
\end{align*}
It will be technically more convenient to work with $\Xc$. Here $\ch_{\Phi}(T)$ is the characteristic polynomial of $\Phi$.

As $l > n$, the logarithm map $\Sigma\mapsto \log(\Sigma-1)$ is well defined for unipotent $\Sigma$ and hence on $\Xc$
we may write $\Sigma = \exp(N)$ for a nilpotent matrix $N \in \gf$, where $\gf$ is the Lie algebra of $GL_n$; the
defining equation then becomes $\Phi N \Phi^{-1} = qN$. In what follows we will describe points on $\Xc$ in the form
$(\Phi, N)$.

We let $X = \Xc_{\F}^{\red}$; it will follow from our work below that
\[X = \overline{\{(\Phi, \Sigma) \in \Xc(\bar{\FF}) : \Sigma \text{ regular nilpotent}\}},\] which is irreducible. This
fact also follows from the results of \cite{shotton-gln} Section~7.

\subsection{Resolution of $\Xc$}
\label{sec:res-def}

Let $B \subset GL_n$ be the standard Borel subgroup with Lie algebra $\bfrak$; let $N$ be its unipotent radical, with
Lie algebra $\nf$. Let $\Fc \cong G/B$ be the flag variety; for an $\Oc$-algebra $R$, we can write a point $F \in \Fc(R)$ as a flag
$0 \subset F_{n-1} \subset \ldots \subset F_0 = R^n$ with the $F_i$ projective $R$-modules such that
$\mathrm{gr}_i(F_{\bullet})$ are all projective. We define $\Zc$ to be the closed subscheme of $G \times \gf \times \Fc$
given (on $R$-points) as the set of triples $(\Phi, N, F)$ such that 

\begin{align*}(\Phi - q^i)F_i &\subset F_{i+1}\\ \intertext{and}
    NF_i&\subset F_{i+1} 
\end{align*}
for $i = 0, \ldots, n-1$. We define $\Yc \subset \Zc$ as the closed subscheme with
\[\Yc(R) = \{(\Phi, N, F) \in \Zc(R) : \Phi N \Phi^{-1}= qN\}.\]

We thus have a closed embedding $\Yc\hookrightarrow \Zc$ fitting into the diagram below:

\[\begin{tikzcd}
	\Zc && \Yc && \Xc \\
	\\
	& \Fc
	\arrow["\pi_{\Zc}"', from=1-1, to=3-2]
	\arrow["{\pi}", from=1-3, to=3-2]
	\arrow[hook', from=1-3, to=1-1]
	\arrow["f", from=1-3, to=1-5]
\end{tikzcd}\]

\begin{lemma}\label{lem:isom-regular} The morphism $f : \Yc \to \Xc$ given by forgetting $F$ is a projective morphism. It is an isomorphism over the open subset of $\Xc$ on which $N$ is regular or $l$ is invertible.
\end{lemma}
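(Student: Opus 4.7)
The plan is to establish the three content pieces of the lemma---that the projection $\Yc \to G \times \gf$ factors through $\Xc$, that the resulting $f : \Yc \to \Xc$ is projective, and that $f$ is an isomorphism over each of the two listed opens---in turn.

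For the factorisation through $\Xc$, note that the flag conditions defining $\Yc$ force $\Phi$ to have characteristic polynomial $\prod_{i=0}^{n-1}(T - q^i)$ (since $\Phi$ acts by $q^i$ on each $F_i/F_{i+1}$) and $N^n = 0$, so using $l > n$ the element $\Sigma := \exp N$ is a well-defined unipotent matrix with $\Phi \Sigma \Phi^{-1} = \Sigma^q$, placing the image of $\Yc$ in the tame moduli space $\Mc$. Writing $\Yc^{\reg} \subset \Yc$ for the open subscheme where $N$ is regular nilpotent, this open maps tautologically into the regular open $\Xc^{\reg} \subset \Xc$; combined with density of $\Yc^{\reg}$ in $\Yc$ and properness of the projection $G \times \gf \times \Fc \to G \times \gf$, this forces the set-theoretic image of $\Yc$ to lie in the closure of $\Xc^{\reg}$, which is $\Xc$. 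Projectivity of $f$ then follows from the factorisation $\Yc \hookrightarrow \Xc \times \Fc \to \Xc$: the first arrow is a closed immersion, and the second is projective since $\Fc$ is projective over $\Oc$.

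For the isomorphism over $\Xc^{\reg}$, I would construct the inverse $\Xc^{\reg} \to \Yc$ by sending $(\Phi, N)$ to the unique Borel subalgebra containing the regular nilpotent $N$; this is well-defined over any base, since the Springer fibre over a regular nilpotent is a reduced point (equivalently, the Grothendieck--Springer resolution is an isomorphism over the regular locus). One verifies the $\Phi$-condition automatically: conjugation by $\Phi$ sends $N$ to $qN$, which lies in the same unique Borel as $N$, so $\Phi$ preserves the associated flag $F$; the commutation $\Phi N = qN\Phi$ combined with the induced isomorphism $N : F_i/F_{i+1} \isomto F_{i+1}/F_{i+2}$ gives the recursion $\lambda_{i+1} = q\lambda_i$ on the scalar $\lambda_i$ by which $\Phi$ acts on $F_i/F_{i+1}$; and $\lambda_0 = 1$ is forced by comparing $\prod_i (T - \lambda_i)$ with the prescribed characteristic polynomial $\prod_i (T - q^i)$. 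For the isomorphism over $\Xc[1/l]$, after inverting $l$ the differences $q^i - q^j$ for $i \ne j$ become units in $\Oc[1/l] = E$ (since $q$ has infinite order in characteristic zero), so Cayley--Hamilton applied to $\ch_\Phi = \prod(T - q^i)$ yields, for any $E$-algebra $R$ and $(\Phi, N) \in \Xc(R)$, a splitting $R^n = \bigoplus_i V_{q^i}$ into rank-one projective $\Phi$-eigenspaces. The flag $F_i := \bigoplus_{j \ge i} V_{q^j}$ is the unique solution of the $\Phi$-condition, and the $N$-condition is automatic since $\Phi N = qN\Phi$ forces $N(V_{q^i}) \subset V_{q^{i+1}}$ (with $V_{q^n} := 0$).

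The most delicate step I anticipate is making the factorisation through $\Xc$ rigorous at the scheme-theoretic (not just set-theoretic) level, since the naive equations defining $\Yc$ can produce embedded components in the special fibre. To address this one would either show $\Yc$ is reduced by analysing the fibres of $\pi: \Yc \to \Fc$ (which for small $n$ are readily computed to be smooth affine varieties, so that $\Yc$ is smooth over $\Fc$), in which case the scheme-theoretic image agrees with the reduced-induced structure on the set-theoretic image, or else appeal to the description of $\Xc$ from Section~7 of \cite{shotton-gln}, which identifies $\Xc$ with the evident closed subscheme of $G \times \gf$ cut out by the commutation, nilpotence, and characteristic polynomial conditions.
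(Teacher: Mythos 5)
Your proof follows the same basic strategy as the paper — projectivity from the factorisation $\Yc \hookrightarrow \Xc \times \Fc \to \Xc$, then an explicit two-sided inverse written down separately on the two opens and matched on the overlap — so the core is correct and aligned. Two remarks on where you differ and where a step wobbles.

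First, you are right to flag the factorisation through $\Xc$ as something that needs an argument; the paper simply asserts that $\Yc$ is a closed subscheme of $\Xc \times \Fc$. Your density-of-$\Yc^{\reg}$ route works, but note it implicitly uses irreducibility (or at least that every component of $\Yc$ meets $\Yc^{\reg}$), which the paper only establishes in the subsequent Lemma~\ref{lem:ZYproperties}; so there is a mild ordering issue in both your proof and the paper's. A cleaner variant: one knows from Lemma~\ref{lem:ZYproperties} that $\Yc$ is $\Oc$-flat, so $\Yc_E$ is dense in $\Yc$, and every point of $\Yc_E$ satisfies the equations defining $\Xc_E$ by Proposition~\ref{prop:eqns-generic}; properness of $\Yc \to G \times \gf$ then puts the image in $\overline{\Xc_E} = \Xc$.

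Second, your ``delicate step'' paragraph contains a false claim: for $n = 3$ the fibres of $\pi : \Yc \to \Fc$ are \emph{not} smooth. Over $\F$ the fibre is $C(\nf) = \{af = cd\} \subset \AA^6$, the affine cone on a smooth quadric, which is singular at the origin, so $\Yc$ is not smooth over $\Fc$ in the residue characteristic. The fallback you offer (reducedness of $\Yc$ from Lemma~\ref{lem:ZYproperties}, or citing \cite{shotton-gln} for the description of $X$) is the right fix. Finally, for the inverse over the $N$-regular locus you appeal to the Springer-fibre / unique-Borel fact rather than writing down $F_i = \ker(N^{n-i})$ as the paper does; these are equivalent, but the explicit kernel formula is what makes the construction manifestly well-defined over an arbitrary $\Oc$-algebra $R$ (not just geometric points), which is the setting in which the lemma needs to hold.
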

\begin{proof}
The scheme $\Yc$ is a closed subscheme of $\Xc\times \Fc$ and $\Fc$ is projective, thus $\Yc \to \Xc$ is projective. Let
$U$ denote the open subset of $\Xc$ on which $N$ is regular or $l$ is invertible. We write down an inverse to $f$ on $U$
by writing down the required flag for each $R$-point $(\Phi, N)$ of $U$:
\begin{itemize}
    \item When $N$ is regular (that is, its value at each point of $\Spec R$ is regular), take $F_i = \ker(N^{n-i})$;
    \item When $l$ is invertible, take $F_i=\bigoplus_{j=i}^{n-1} \ker(\Phi-q^i)$.
\end{itemize}
The relation $\Phi N \Phi^{-1} = qN$ implies that these agree on the locus where  $N$
is regular \emph{and} $l$ is invertible, and one can check that this defines a two-sided inverse $U \to f^{-1}(U) \subset \Yc$ of $f|_{f^{-1}(U)}$.
\end{proof}

\begin{lemma}\label{lem:ZYproperties}
\begin{enumerate}
    \item The scheme $\mathcal{Z}$ is an affine bundle over $\Fc$; in particular, it is $\Oc$-flat and
    $\mathcal{Z}_\F$ is reduced and irreducible.
    \item If $n \le 3$, $\mathcal{Y}$ is $\Oc$-flat and $\mathcal{Y}_{\FF}$ is reduced and irreducible. It is a local
    complete intersection, and hence Cohen--Macaulay.
\end{enumerate}
\end{lemma}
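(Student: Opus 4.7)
The plan is to realise $\Zc$ and $\Yc$ as associated bundles over the flag variety $\Fc = G/B$ (with $B$ the standard Borel and $F^{\mathrm{st}}$ its flag) and then reduce each statement to a fibre computation.

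For (1), the fibre of $\Zc \to \Fc$ over $F^{\mathrm{st}}$ consists of pairs $(\Phi, N)$ with $\Phi \in \Phi_0 + \nf$ and $N \in \nf$, where $\Phi_0$ is a fixed diagonal matrix with entries a suitable ordering of $1, q, \ldots, q^{n-1}$. Conjugation by $B$ preserves the subset $W \defeq (\Phi_0 + \nf) \times \nf \subset \gf \times \gf$, giving an identification $\Zc \cong G \times^B W$. Since $W$ is a $B$-torsor under the $B$-representation $\nf \oplus \nf$, this presents $\Zc$ as an affine bundle over $\Fc$. Hence $\Zc$ is smooth over $\Oc$ (in particular $\Oc$-flat), and $\Zc_\F$ is smooth (so reduced) and irreducible (as $\Fc_\F$ is).

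For (2), the key observation is that on $\Zc$ the $q$-commutator $c \defeq \Phi N - qN\Phi$ \emph{automatically} sends each $F_i$ into $F_{i+2}$: for $v \in F_i$, the relations $\Phi v \equiv q^i v \pmod{F_{i+1}}$ and $Nv \in F_{i+1}$ give both $\Phi Nv \equiv q^{i+1}Nv$ and $qN\Phi v \equiv q^{i+1}Nv$ modulo $F_{i+2}$. Hence $\Yc \subset \Zc$ is the zero locus of a section of the rank-$\binom{n-1}{2}$ vector bundle $G \times^B \nf^{(2)}$, where $\nf^{(2)} \defeq \{X \in \nf : X F^{\mathrm{st}}_i \subset F^{\mathrm{st}}_{i+2} \text{ for all } i\}$. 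For $n=2$ this rank is zero and $\Yc = \Zc$, so (2) reduces to (1).

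For $n=3$ we have $\binom{n-1}{2} = 1$, so $\Yc$ is cut out by a single equation $f$. I would compute $f$ explicitly on the fibre over $F^{\mathrm{st}}$ in characteristic $l$: there $\Phi = I + M$ with $M \in \nf$ (since $q \equiv 1 \bmod l$), so $c$ reduces to the ordinary commutator $[M, N]$, and writing the superdiagonal entries of $M$ and $N$ as $(a, c)$ and $(d, f)$ one finds that the unique component of $[M, N]$ in $\nf^{(2)}$ is $af - dc$, an irreducible polynomial in the six fibre coordinates. Thus the fibre $Y_B$ of $\Yc_\F \to \Fc_\F$ is an integral hypersurface of codimension one, and $f$ is a non-zerodivisor on the smooth scheme $\Zc_\F$. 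By the standard DVR criterion (if $\Zc$ is $\Oc$-flat and $f \bmod l$ is a non-zerodivisor on $\Zc_\F$, then $\Zc/(f)$ is $\Oc$-flat and $f$ is a non-zerodivisor on $\Zc$), $\Yc$ is $\Oc$-flat and a codimension-$1$ complete intersection in the smooth scheme $\Zc$, hence l.c.i.\ and Cohen--Macaulay; irreducibility and reducedness of $\Yc_\F \cong G \times^B Y_B$ follow from those of $Y_B$. The main obstacle is this fibre computation: it is the coincidence $\binom{n-1}{2} \le 1$ for $n \le 3$ that lets $\Yc$ be cut out by at most one equation, so that irreducibility of the single polynomial $af - dc$ suffices; for $n \ge 4$ one would need to show that several defining equations form a regular sequence on $\Zc_\F$, which the hypothesis $n \le 3$ suggests is genuinely harder.
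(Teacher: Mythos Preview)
Your proof is correct and the overall strategy matches the paper's: realise $\Zc$ as a bundle over $\Fc$ with fibre $(\Phi_0+\nf)\times\nf$, then for $n=3$ show $\Yc$ is cut out by the single irreducible equation $af-dc$ on the special fibre.

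The packaging differs slightly. The paper works with explicit local trivialisations of $\Zc$ over open affines of $\Fc$, writes out the full integral equation $(q^2-1)e+af-dc=0$, and deduces $\Oc$-flatness by noting this is not divisible by a uniformiser. You instead observe \emph{a priori} that the $q$-commutator $\Phi N-qN\Phi$ lands in the rank-$\binom{n-1}{2}$ bundle $G\times^B\nf^{(2)}$, which explains conceptually why there is at most one equation for $n\le 3$ (and why $n\ge 4$ is harder); you then compute only the mod-$l$ reduction and invoke a DVR regular-sequence criterion for flatness. Your approach buys a cleaner explanation of the role of the hypothesis $n\le 3$, at the cost of not displaying the integral equation; the paper's explicit computation is marginally more concrete but obscures why only one equation survives.
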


\begin{proof}
\begin{enumerate}
    \item We may cover $\Fc$ by open affine subschemes $U$, with $U=\Spec(A)$, such that the projection $\GL_n \rightarrow
    \Fc$ has a section $\gamma:U\rightarrow \GL_n$. Notice that $\gamma \in GL_n(A)$, so the universal pair $(\Phi,N)$
    on $\pi_{\Zc}^{-1}(U)$ takes the form 
    \[(\gamma(\Phi_0+M)\gamma^{-1},\gamma N \gamma^{-1})\] with $\Phi_0=\textrm{diag}(q^{n-1},...,q,1)$ and $M,N\in
    \nf$. It is now easy to see that $\pi_{\Zc}^{-1}(U)\cong U\times \nf^2$; examining the behaviour of $M$ under
    conjugation by $B$ gives that $\Zc$ is an affine bundle.

    \item When $n=2$, $\Yc=\Zc$, so we are done. 
    
    For $n=3$, the argument of part 1 gives similarly that $\Yc \times_{\Fc}U\cong U\times \Cc(\nf)$, where 
    \[\Cc(\nf)=\{(M,N)\in \nf\times \nf| (\Phi_0+M)N-qN(\Phi_0+M)=0\}\]
    Let \[M=\begin{pmatrix}
        0 & a & b \\
        0 & 0 & c \\
        0 & 0 & 0
    \end{pmatrix}\] and 
    \[N=\begin{pmatrix}
        0 & d & e \\
        0 & 0 & f \\
        0 & 0 & 0 \end{pmatrix}.\] Then the equation defining $\Cc(\nf)$ is $(q^2-1)e+af-dc=0$. This equation is not
    divisible by a uniformiser of $\Oc$, so $\Cc(\nf)$ and hence $\Yc$ is $\Oc$-flat. Now, $\Cc(\nf)_\FF$ is the affine
    hypersurface in $\Spec \FF[a,b,c,d,e,f]$ defined by $af - cd = 0$. Since $af-cd$ is irreducible, the rest of 
    part~(2) follows.\qedhere
\end{enumerate}
\end{proof} 

Recall that $X = \Xc_\F^{\red}$, and define $Y = \Yc_{\FF}$. Since $Y$ is reduced, the morphism $Y \to \Xc_\F$ factors
through $X$. Abusing notation slightly, we also write $f$ for this map $Y \to X$. 
The proof of the next theorem occupies most of the next two sections.
\begin{theorem}\label{thm:resolution} Suppose that $n \le 3$ (and recall that Assumption~\ref{ass:nlq} entails $l > n$).
\begin{enumerate}
\item The morphism $f : Y \to X$ is birational, \[R^if_*\Oc_Y = R^if_*\omega_Y = 0\] for $i > 0$, and the induced map $\Oc_X
\to f_*\Oc_Y$ is an isomorphism.
\item The variety $X$ is Cohen-Macaulay, with $X  =\Spec \Gamma(Y, \Oc_Y)$, and $\omega_X \cong f_*\omega_Y$.
\end{enumerate}
\end{theorem}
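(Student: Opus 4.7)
The plan is to reduce part (1) to cohomology calculations on the flag variety $\Fc$ (carried out in Section~\ref{sec:cohom-calc}) and then derive part (2) from (1) together with the Cohen--Macaulayness of $Y$ (Lemma~\ref{lem:ZYproperties}) via Grothendieck duality. Birationality of $f$ is immediate from Lemma~\ref{lem:isom-regular}: once one knows $X$ is irreducible, the regular nilpotent locus is a nonempty open subset of $X$ and $f$ is an isomorphism there. Because $X$ is affine---it is closed in the affine scheme $\GL_n \times \gf$---the sheaves $R^i f_* \Oc_Y$ and $R^i f_* \omega_Y$ are the quasi-coherent sheaves associated to $H^i(Y, \Oc_Y)$ and $H^i(Y, \omega_Y)$, so the remaining assertions in (1) amount to vanishing of these cohomology groups for $i > 0$, together with the identification $\Gamma(X, \Oc_X) \cong H^0(Y, \Oc_Y)$.

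For these global statements I would push everything down along $\pi : Y \to \Fc$. The local analysis in the proof of Lemma~\ref{lem:ZYproperties} exhibits $Y$ as the vanishing locus, inside the affine bundle $\pi_{\Zc} : \Zc \to \Fc$, of a single equation that (one checks) glues into a global section of a line bundle on $\Zc$. Combining the resulting Koszul resolution $0 \to \Oc_{\Zc}(-D) \to \Oc_{\Zc} \to \Oc_Y \to 0$ with the projection formula for $\pi_{\Zc}$ expresses $R\pi_* \Oc_Y$ and $R\pi_* \omega_Y$ in terms of symmetric powers of natural vector bundles on $\Fc$ twisted by line bundles. The Leray spectral sequence then reduces the desired vanishings, and the identification of $H^0(Y, \Oc_Y)$, to a finite list of cohomology computations for explicit vector bundles on $\Fc_{\F}$. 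This reduction is the heart of the argument, and the calculations themselves are the principal technical obstacle, since the hypothesis $q \equiv 1 \bmod l$ forces us to work in characteristic $l$, where Borel--Weil--Bott fails and one must rely on (and push beyond) the methods of Vilonen--Xue and Ngo.

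Given part (1), part (2) is a formal consequence of Grothendieck duality for the proper morphism $f$ together with the Cohen--Macaulayness of $Y$. The identity $f_* \Oc_Y = \Oc_X$, together with $X$ being affine, immediately yields $X = \Spec \Gamma(Y, \Oc_Y)$. For the remaining statements, duality applied to $Rf_* \Oc_Y = \Oc_X$ gives $Rf_* \omega_Y \cong \omega_X^\bullet$ (the dualizing complex of $X$), and the vanishing $R^i f_* \omega_Y = 0$ for $i > 0$ then forces $\omega_X^\bullet$ to be concentrated in degree~$0$. This simultaneously establishes that $X$ is Cohen--Macaulay and that $\omega_X \cong f_* \omega_Y$.
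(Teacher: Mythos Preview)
Your overall strategy matches the paper's: push down to $F$, reduce the higher vanishing to the computations in Section~\ref{sec:cohom-calc}, and then deduce part~(2) formally. Your Grothendieck duality argument for part~(2) is exactly the content of \cite[Lemma~2.1.4]{snowdenSingularitiesOrdinaryDeformation2018}, which is what the paper invokes (up to the harmless shift in the normalization of $\omega_X^\bullet$).

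There is, however, a genuine gap in your treatment of the isomorphism $\Oc_X \isomto f_*\Oc_Y$. Computing $H^0(Y,\Oc_Y)$ as a graded vector space via cohomology on $F$ does not, by itself, identify it with $\Gamma(X,\Oc_X)$: you must track the \emph{map}, and $X$ is defined only as a Zariski closure, so you do not know $\Gamma(X,\Oc_X)$ in advance. Injectivity is free from birationality, but surjectivity needs an extra idea. The paper's device (following Snowden) is to introduce the polynomial ring $S = \Sym[(\gf^*)^2]$ and factor
\[
S \twoheadrightarrow \Gamma(X,\Oc_X) \xrightarrow{f^*} H^0(Y,\Oc_Y),
\]
then show that the composite is surjective. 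For this one first shows $S \to \tilde{R} = H^0(Z,\Oc_Z)$ is surjective by computing $\tilde{R}/S_+\tilde{R} = \Tor_0^S(\tilde{R},\F)$ via \cite[Proposition~2.1.5]{snowdenSingularitiesOrdinaryDeformation2018}; this reduces to the vanishing of $H^i(F,\Lambda^i[\bfrak^2])$ for $i>0$ (Calculation~\ref{calc3}), which is a \emph{different} Koszul-type input from the one you set up (it comes from $0 \to \bfrak^2 \to \gf^2 \to (\gf/\bfrak)^2 \to 0$, not from the hypersurface $Y\subset Z$). Surjectivity of $\tilde{R}\to H^0(Y,\Oc_Y)$ then follows from $H^1(Z,\Ic_Y)=0$. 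Your proposal jumps from ``compute $H^0$'' to ``identify $H^0$'' without this bridge.

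A smaller point: your birationality argument needs $X$ irreducible, which you flag but do not prove. The paper obtains it (Lemma~\ref{lem:birational}) from the irreducibility of $Y$ (Lemma~\ref{lem:ZYproperties}) together with surjectivity of $f$, the latter deduced from properness and $\Oc$-flatness of $\Xc$.
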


\begin{proof}Part~(2) of Theorem~\ref{thm:resolution} follows from part~(1) by Lemma 2.1.4 of
\cite{snowdenSingularitiesOrdinaryDeformation2018} and the fact that $Y$ is Cohen--Macaulay. We will prove part~(1) in
the next section.
\end{proof}

\begin{remark}
    In fact $Y$ has resolution-rational singularities --- because of the corresponding fact for the cone on a smooth
    quadric in $\PP^3$ --- and it follows that $X$ also has resolution-rational singularities, the strongest of various
    possible notions of rational singularity in positive characteristic discussed in \cite{KovacsRS}. 
\end{remark}

\subsection{Proof of Theorem \ref{thm:main}}

In this section, we explain how to deduce Theorem~\ref{thm:main} from Theorem \ref{thm:resolution}. As
$\Xc_{St}\cong\Xc\times \GG_m$, we need only prove the theorem for $\Xc$. Let $B = \Gamma(\Yc, \Oc_{\Yc})$ and $A =
\Gamma(\Xc, \Oc_{\Xc})$. Thus we have a morphism $f^*:A \to B$ that we would like to show is an isomorphism.

By flat base change, $B \otimes_{\Oc} E = \Gamma(\Yc_E, \Oc_{\Yc_E}) = \Gamma(\Xc_E, \Oc_{\Xc_E}) = A \otimes_{\Oc}E$.
Moreover, as $f:\Yc \to \Xc$ is proper, $B$ is a finite $A$-algebra.

\begin{lemma}
We have $B \otimes_{\Oc}\FF = \Gamma(Y, \Oc_{Y})$.
\end{lemma}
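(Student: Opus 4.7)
The plan is to transfer between $\Yc$ and its special fibre using the uniformiser short exact sequence and the proper pushforward $f: \Yc \to \Xc$. Since $\Yc$ is $\Oc$-flat by Lemma~\ref{lem:ZYproperties}, multiplication by a uniformiser $\varpi \in \Oc$ gives a short exact sequence
\[0 \to \Oc_\Yc \xrightarrow{\varpi} \Oc_\Yc \to \Oc_{\Yc_\FF} \to 0\]
on $\Yc$. First I would apply $Rf_*$ to obtain a long exact sequence of coherent sheaves on $\Xc$, the relevant part being
\[0 \to f_*\Oc_\Yc \xrightarrow{\varpi} f_*\Oc_\Yc \to f_*\Oc_{\Yc_\FF} \to R^1 f_*\Oc_\Yc \xrightarrow{\varpi} R^1 f_*\Oc_\Yc \to R^1 f_*\Oc_{\Yc_\FF}.\]

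The main step is to show $R^1 f_*\Oc_\Yc = 0$, which I will do by combining two inputs. First, $\Yc_\FF = Y$ is reduced by Lemma~\ref{lem:ZYproperties} and $f$ factors through the closed immersion $X \hookrightarrow \Xc$, so $R^i f_*\Oc_{\Yc_\FF}$ agrees with $R^i f_*\Oc_Y$ (extended by zero across the immersion); by Theorem~\ref{thm:resolution}(1) this vanishes for $i > 0$. The long exact sequence then forces $\varpi$ to act surjectively on $R^1 f_*\Oc_\Yc$. Second, flat base change along $\Spec E \to \Spec \Oc$ combined with the isomorphism $\Yc_E \isomto \Xc_E$ of Lemma~\ref{lem:isom-regular} gives $R^1 f_*\Oc_\Yc \otimes_\Oc E = 0$, so $R^1 f_*\Oc_\Yc$ is $\varpi$-power torsion. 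Coherence on $\Xc$ bounds the torsion by some uniform $\varpi^N$, and a module on which $\varpi$ acts both surjectively and nilpotently must vanish, giving $R^1 f_*\Oc_\Yc = 0$.

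With this vanishing the long exact sequence collapses to an isomorphism $f_*\Oc_\Yc / \varpi \isomto f_*\Oc_{\Yc_\FF}$. Since $\Xc$ is affine, taking global sections is exact, and using $B \otimes_\Oc \FF = B/\varpi B$ we conclude
\[B \otimes_\Oc \FF \;=\; \Gamma(\Xc, f_*\Oc_\Yc)/\varpi \;=\; \Gamma(\Xc, f_*\Oc_{\Yc_\FF}) \;=\; \Gamma(Y, \Oc_Y),\]
as required.

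I expect the main obstacle to be the vanishing of $R^1 f_*\Oc_\Yc$: neither Theorem~\ref{thm:resolution}(1) on the special fibre nor the generic-fibre isomorphism suffices on its own, and one must marry them via coherence to upgrade $\varpi$-surjectivity to triviality.
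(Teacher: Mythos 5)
Your proof is correct and takes essentially the same approach as the paper: both use the $\varpi$-multiplication short exact sequence, deduce $R^1 f_*\Oc_{\Yc}=0$ by combining the special-fibre vanishing from Theorem~\ref{thm:resolution} (giving $\varpi$-surjectivity, i.e.\ $(R^1f_*\Oc_\Yc)\otimes\FF=0$) with the generic-fibre vanishing from flat base change and Lemma~\ref{lem:isom-regular}, and then use finite generation over $A$ to conclude. Your ``surjective and nilpotent, hence zero'' phrasing is just an unwinding of the paper's observation that a finitely generated module with $M\otimes E = M\otimes \FF = 0$ vanishes.
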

\begin{proof}
Using the short exact sequence $0 \to \Oc_{\Yc} \xrightarrow{\times \varpi} \Oc_{\Yc} \to \Oc_{Y} \to 0$ it suffices to
show that $H^1(\Yc, \Oc_{\Yc}) = 0$. Using that $\Xc$ is affine, this is equivalent to showing that $R^1f_{*}(\Oc_{\Yc})
= 0$. By Theorem~\ref{thm:resolution}, $R^if_{*}(\Oc_{Y}) = 0$ for $i \ge 1$ and, in particular, $(R^1f_*
\Oc_{\Yc})\otimes \FF = 0$ (using the short exact sequence again). But we also have $R^1f_*\Oc_{\Yc} \otimes E = 0$ by
flat base change and the fact that $f$ is an isomorphism after inverting $l$. Now $R^1f_*\Oc_{\Yc}$ is a
finitely-generated $A$-module $M$ (as $f$ is proper)  such that $M \otimes_\Oc \FF = M \otimes_\Oc E = 0$, from which it
follows that $M = 0$. 
\end{proof}

\begin{proposition} \label{prop:AeqB}
The map $A \to B$ is an isomorphism.
\end{proposition}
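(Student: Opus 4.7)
The plan is to reduce the statement to the special fibre and invoke Theorem~\ref{thm:resolution}(1). Write $C = \ker(f^*)$ and $Q = \coker(f^*)$. Both are finitely generated $A$-modules, since $B$ is finite over $A$ ($f$ being proper with affine target). Because $f\colon \Yc \to \Xc$ is an isomorphism after inverting $\varpi$ by Lemma~\ref{lem:isom-regular}, flat base change gives that $A \otimes_\Oc E \to B \otimes_\Oc E$ is an isomorphism, so both $C$ and $Q$ are killed by some power of $\varpi$.

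First I would check that $C = 0$. The scheme $\Xc_{\St}$ is $\Oc$-flat, being the Zariski closure of a subset of the generic fibre of the flat $\Oc$-scheme $\Mc_{n,q}$; via the isomorphism $\GG_m \times \Xc \isomto \Xc_{\St}$ recalled earlier, this gives that $\Xc$, and hence $A$, is $\Oc$-torsion-free. So the ideal $C \subseteq A$, being $\varpi$-power torsion, vanishes. Thus $f^*\colon A \hookrightarrow B$.

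Next I would show that $Q \otimes_\Oc \FF = 0$. Once this is in hand, iterated Nakayama applied to the $\varpi$-power torsion module $Q$ gives $Q = \varpi Q = \cdots = \varpi^N Q = 0$, whence $A = B$. Tensoring $0 \to A \to B \to Q \to 0$ with $\FF$ yields the right-exact sequence
\[A \otimes_\Oc \FF \to B \otimes_\Oc \FF \to Q \otimes_\Oc \FF \to 0,\]
so it is enough to show that $A \otimes_\Oc \FF \to B \otimes_\Oc \FF$ is surjective. By the preceding lemma $B \otimes_\Oc \FF = \Gamma(Y, \Oc_Y)$, and since $Y$ is reduced the morphism $Y \to \Xc_\FF$ factors through the closed immersion $X = \Xc_\FF^{\red} \hookrightarrow \Xc_\FF$, producing the factorisation
\[A \otimes_\Oc \FF = \Gamma(\Xc_\FF, \Oc_{\Xc_\FF}) \twoheadrightarrow \Gamma(X, \Oc_X) \isomto \Gamma(Y, \Oc_Y) = B \otimes_\Oc \FF,\]
in which the first map is surjective because $\Xc_\FF$ is affine, and the second is an isomorphism by applying $\Gamma(X, -)$ to the isomorphism $\Oc_X \isomto f_*\Oc_Y$ of Theorem~\ref{thm:resolution}(1).

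The substantive geometric content is absorbed into Theorem~\ref{thm:resolution}(1); the proposition itself is then a formal assembly of flatness and Nakayama arguments, and I foresee no further obstacle in this deduction.
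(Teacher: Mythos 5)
Your proof is correct and follows essentially the same approach as the paper: reduce to the special fibre using finiteness of $B$ over $A$ and the generic-fibre isomorphism, then invoke Theorem~\ref{thm:resolution} to see that $A\otimes_\Oc\FF \to B\otimes_\Oc\FF$ is the surjection $A\otimes_\Oc\FF\twoheadrightarrow(A\otimes_\Oc\FF)^{\red}$. The only cosmetic difference is ordering (you dispose of the kernel first using $\Oc$-torsion-freeness of $A$, the paper first establishes surjectivity and then uses flatness for injectivity), and your spelled-out factorisation $A\otimes\FF\twoheadrightarrow\Gamma(X,\Oc_X)\isomto\Gamma(Y,\Oc_Y)$ via $\Oc_X\cong f_*\Oc_Y$ is exactly what the paper invokes via Theorem~\ref{thm:resolution}(2); note also that your ``iterated Nakayama'' step is really just iterating $Q=\varpi Q$ against $\varpi^N Q = 0$, which is fine since $Q$ need not be $\Oc$-finite.
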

\begin{proof}
We know the proposition after inverting $l$. We claim that $A \to B$ is surjective. After $\otimes \FF$, this follows as, by the previous lemma and Theorem~\ref{thm:main} (2), $B \otimes_{\Oc} \FF = \Gamma(Y, \Oc_Y) = (A\otimes_{\Oc} \FF)^{\red}$. But then the cokernel, a finite $A$-module, vanishes after $\otimes E$ and $\otimes \FF$ and so must be zero, as at the end of the previous proof. 

Now, $A \to B$ is a surjective map of flat $\Oc$-algebras that is an isomorphism after inverting $l$, and is therefore an isomorphism, as required.
\end{proof}

\begin{proof}[Proof of Theorem~\ref{thm:main}]
   Since $A$ is $\varpi$-torsion free, $\varpi$ is a regular element of $A$. By Proposition~\ref{prop:AeqB}, $A
   \otimes_\Oc \FF = B \otimes_\Oc \FF = \Gamma(Y, \Oc_Y)$. This algebra is reduced since $Y$ is reduced, and so $X = \Spec(A \otimes_\Oc \FF)$.
   By Theorem~\ref{thm:resolution}, $X$ is Cohen--Macaulay. Finally,
   we will show below in Lemma~\ref{lem:YtoF} that the singular locus of $X$ has codimension 2, and so $X$ is normal and reduced by Serre's criterion and the fact that it is Cohen--Macaulay.
\end{proof}

\section{Vector bundles on the flag variety}
\label{sec:vec-buns}

Our aim in this section is to prove Theorem~\ref{thm:resolution}, modulo technical cohomological calculations that we
defer until later.

Recall that $X = \Xc_\F^{\red}$ and $Y = \Yc_\F$, and similarly define $Z = \Zc_\F$ and $F = \Fc_\F$. Note that $Y
\subset Z$ is a closed subscheme (and this is an equality for $n=2$). We continue to write $\pi$ and $\pi_Z$ for the natural morphisms $Y \to F$ and $Z \to F$, respectively. This gives us the following diagram of varieties over $\F$:

\[\begin{tikzcd}
	Z && Y && X \\
	\\
	& F
	\arrow["\pi_Z"', from=1-1, to=3-2]
	\arrow["{\pi}", from=1-3, to=3-2]
	\arrow[hook', from=1-3, to=1-1]
	\arrow["f", from=1-3, to=1-5]
\end{tikzcd}\]

Since $\pi_Z$ is affine, for any coherent sheaf $\Vc$ on $Z$ we have $H^i(Z, \Vc) = H^i(F, \pi_{Z,*}\Vc)$ (and similarly for
sheaves on $Y$). This is the starting point of our analysis.

\subsection{Bundles, roots, weights}
\label{sec:buns}
Working always over the field $\F$, we let $G = SL_n$ (a small departure from the previous section) and take $B$ to be
the standard Borel subgroup of upper triangular matrices, with $T$ the standard torus and $U$ the unipotent radical. Let
$\gf$, $\bfrak$, $\tf$ and $\nf$ be their respective Lie algebras. We write $X(T)$ for the character group of $T$, choose a system of positive 
roots such that the weights of $\nf$ are \emph{negative}, and write $\rho$ for half the sum of the positive roots.

If $V$ is a variety over $\F$ with an action of $B$, then we can form $G \times^B V$, which is a fibre bundle over $F$
with fibre $V$, and carries an action of $G$ compatible with that on $G/B$. 

If $V$ is the vector-space scheme underlying a finite-dimensional representation of $B$, then we obtain a
$G$-equivariant vector bundle on $F$, and this furnishes an equivalence of abelian categories between finite-dimensional
representations of $B$ over $\F$ and $G$-equivariant locally free coherent sheaves on $F$. We will therefore identify finite-dimensional representations of $B$ with the corresponding $G$-equivariant coherent sheaves.

The representation $\gf$ is self-dual via the trace pairing and under
this pairing $\bfrak = \nf^{\perp}$; thus $\nf^* \cong \gf/\bfrak$ as $B$-representations. If $\chi \in X(T)$ is a character, we write $\Oc(\chi)$ for the corresponding $G$-equivariant
line bundle on $F$. For $\Ec$ a coherent sheaf on $F$ we write $\Ec(\chi) = \Ec \otimes_{\Oc_F} \Oc(\chi)$. 

\subsection{The varieties $Y$ and $Z$.}
\label{sec:YZ}
Examining the proof of Lemma~\ref{lem:ZYproperties} and noting that $\Phi_0$ there is the \emph{identity} matrix over
$\F$, we see that  
\[Z \cong G \times^B (\nf \times \nf)\]
and that 
\[Y \cong G \times^B C(\nf)\]
where
\[C(\nf) = \{(M, N) \in \nf \times \nf : [M,N] = 0\}.\] In particular,
$Z$ is a vector bundle over $F$. 

\begin{lemma}\label{lem:birational} Suppose that $n = 2$ or $n = 3$. Then the morphism $f: Y \to X$ is birational.
\end{lemma}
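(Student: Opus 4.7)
The plan is to exhibit a common dense open subscheme of $Y$ and $X$ on which $f$ restricts to an isomorphism. The natural candidate is the locus where $N$ is regular nilpotent, since there the condition $N F_i \subset F_{i+1}$ forces $F_i = \ker(N^{n-i})$ uniquely, so the flag is recovered from the pair $(\Phi, N)$.

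First I would invoke Lemma~\ref{lem:isom-regular}: the morphism $f : \Yc \to \Xc$ is already an isomorphism over the open subscheme $\Xc^\reg \subset \Xc$ where $N$ is regular. Restricting to special fibres gives an isomorphism onto $\Xc^\reg_\F$. Since the source is an open subscheme of the reduced variety $Y$ (Lemma~\ref{lem:ZYproperties}(2)), the target is also reduced, and hence coincides with an open subscheme $X^\reg \subset X$. Writing $Y^\reg$ for its preimage in $Y$ gives $f : Y^\reg \isomto X^\reg$. To see $Y^\reg$ is nonempty I would exhibit the explicit triple $(\Phi, N, F) = (I, N_0, F_0)$, with $N_0$ the standard regular nilpotent Jordan block and $F_0$ the standard flag: under Assumption~\ref{ass:nlq} the Steinberg relation $\Phi N \Phi^{-1} = qN$ reduces modulo $\varpi$ to $[\Phi, N] = 0$, which is trivial for $\Phi = I$, and the flag conditions $N F_i \subset F_{i+1}$ and $(\Phi - q^i) F_i \subset F_{i+1}$ are immediate (the latter vanishing mod $\varpi$). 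Irreducibility of $Y$ (Lemma~\ref{lem:ZYproperties}(2)) then gives that $Y^\reg$ is dense in $Y$.

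The remaining subtle point---which I expect to be the main obstacle---is to upgrade this isomorphism on a common dense open to genuine birationality, i.e.\ to show $f(Y) = X$. As $f$ is proper, $f(Y)$ is closed and irreducible in $X$ and contains $X^\reg$; so it suffices that $X^\reg$ be dense in $X$, equivalently that $X$ be irreducible. As noted in the introduction, this follows from \cite{shotton-gln} Section~7 (or alternatively from the subsequent cohomological work of this paper applied to $Y$); granting it, the lemma is immediate.
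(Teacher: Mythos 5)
Your approach matches the paper's in outline: restrict the isomorphism of Lemma~\ref{lem:isom-regular} to the special fibre to produce a common open $Y^\reg \isomto X^\reg$, note that $Y^\reg$ is dense in the irreducible $Y$, and then reduce to showing $X^\reg$ is dense in $X$, i.e.\ that $X$ is irreducible. You are right that this last step is where the real content lies. Where you diverge is in how it is handled. Citing \cite{shotton-gln} Section~7 is legitimate (the paper itself notes it in Section~\ref{sec:resolution}), but the paper's own proof is self-contained and shorter: $\Yc \to \Xc$ is proper and is an isomorphism (hence surjective) over $E$, and $\Xc$ is $\Oc$-flat (being a schematic closure of a locally closed subscheme of the generic fibre, it has no component supported in the special fibre), so the closed image $f(\Yc)$ contains the dense open $\Xc_E$ and hence equals $\Xc$. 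Passing to special fibres gives $Y \to X$ surjective, and irreducibility of $X$ then follows from that of $Y$ — no external reference needed.

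One genuine problem with your alternative: appealing to ``the subsequent cohomological work of this paper applied to $Y$'' would be circular, since Theorem~\ref{thm:resolution}~(1), which is what that cohomological work establishes, lists birationality of $f$ among its conclusions and its proof takes Lemma~\ref{lem:birational} as input. Drop that suggestion. The explicit point $(\Phi,N,F) = (I, N_0, F_0)$ you exhibit in $Y^\reg$ is correct and a reasonable way to see nonemptiness; the paper instead observes that the regular locus has full dimension $n^2 - 1$ in $Y$, which accomplishes the same thing.
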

\begin{proof}
    It follows from the above that $Y$ is irreducible. Since $\Yc \to \Xc$ is proper, surjective over $E$, and $\Xc$ is
    flat over $\Oc$, we see that $f$ is surjective. As $n \le 3$, $Y$ is irreducible, and so the same is
    true for $X$. Moreover, $\dim Y = n^2 - 1$ (again, for $n \le 3$), and this is also the dimension of the set of
    points $(M,N,F)\in Y$ such that $N$ is regular. But $f$ is an isomorphism on this locus, and is therefore
    birational.
\end{proof}

If $\Ec$ is a coherent sheaf on $Z$, we write $\Ec(\chi)=\Ec\otimes
_{\Oc_Z}\pi_Z^*\Oc(\chi)$. The projection formula then gives the compatibility
\[\pi_{Z,*}\Ec(\chi) \cong (\pi_{Z,*}\Ec)(\chi).\]
Similarly for coherent sheaves on $Y$.

\begin{proposition}{\label{prop:sheafcalcs}}  We have the following isomorphisms:
\begin{align}
     \pi_{Z,*} \Oc_Z &\cong \Sym[(\gf/\bfrak)^2]\\
     \omega_Z &\cong \Oc_Z(2\rho)\\
\intertext{and, if $n = 3$, then}
     \Ic_Y &\cong\Oc_Z(\rho)\\
     \omega_Y &\cong \Oc_Y(\rho).
\end{align}
\end{proposition}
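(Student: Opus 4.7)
The plan is to exploit the identifications from Section~\ref{sec:YZ}, namely $Z = G\times^B(\nf\oplus\nf)$ and $Y = G\times^B C(\nf)$, and translate each of the four isomorphisms into a computation of weights on the fibres. Throughout I use the equivalence between finite-dimensional $B$-representations and $G$-equivariant coherent sheaves on $F$ recalled in Section~\ref{sec:buns}, together with standard formulae for the push-forward and canonical bundle of a vector bundle.

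Parts~(1) and~(2) are formal. Since $Z$ is the geometric vector bundle attached to the $B$-representation $W = \nf\oplus\nf$, push-forward of $\Oc_Z$ gives polynomial functions on the fibres: $\pi_{Z,*}\Oc_Z \cong \Sym(W^*) \cong \Sym[(\gf/\bfrak)^2]$, the last identification coming from the trace pairing $\nf^*\cong\gf/\bfrak$. For (2) I would combine $\omega_Z \cong \pi_Z^*\omega_F\otimes\omega_{Z/F}$ with $\omega_{Z/F} \cong \pi_Z^*(\det W)^{-1}$. The $B$-character of $\det W$ is $-4\rho$ (each $\det\nf$ has weight $-2\rho$, by the chosen positive system), while $\omega_F\cong\Oc(-2\rho)$ by the analogous calculation on $T_F = G\times^B(\gf/\bfrak)$; combining these recovers $\omega_Z\cong\Oc_Z(2\rho)$.

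The substantive content is in~(3) for $n=3$, where the key input is that $[\nf,\nf]$ is one-dimensional. Indeed, $\nf$ is spanned by $E_{12},E_{23},E_{13}$ with only nontrivial bracket $[E_{12},E_{23}]=E_{13}$, so $[\nf,\nf]=\F\cdot E_{13}$, and the weight of $E_{13}$ is $-\rho$ (for $SL_3$, $\rho$ coincides with the highest root in the paper's positive system). The universal pair of matrices on $Z$ gives a canonical section $[M,N]$ of $\pi_Z^*(G\times^B\nf)$ whose vanishing locus is $Y$. Since its image lies in $[\nf,\nf]$, it is in fact a section of the sub-line-bundle $\pi_Z^*(G\times^B[\nf,\nf])=\Oc_Z(-\rho)$, and the standard identification of the ideal sheaf of an effective Cartier divisor gives $\Ic_Y\cong \Oc_Z(-\rho)^{-1}=\Oc_Z(\rho)$. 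Part~(4) then follows from adjunction for the Cartier divisor $Y\subset Z$: $\omega_Y\cong \omega_Z|_Y\otimes\Oc_Z(Y)|_Y = \Oc_Y(2\rho)\otimes\Oc_Y(-\rho)=\Oc_Y(\rho)$, where I have used $\Oc_Z(Y)=\Ic_Y^{-1}=\Oc_Z(-\rho)$ and the fact that $Y$ is Cohen--Macaulay (Lemma~\ref{lem:ZYproperties}) so that adjunction for the dualising sheaf applies.

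The main obstacle is careful bookkeeping of sign conventions: the paper's choice that $\nf$ carries negative roots (so that for $SL_3$ the weight $\rho$ coincides with the highest root and $[\nf,\nf]$ has weight $-\rho$), and the normalisation of $\Oc(\chi)$ as a $G$-equivariant line bundle (which, if in doubt, one pins down by the formula $\omega_Z\cong\Oc_Z(2\rho)$ itself). Once those conventions are fixed, every step reduces to routine weight arithmetic, and the only genuine geometric input is the one-dimensionality of $[\nf,\nf]$ for $SL_3$ --- which is precisely what makes $\Ic_Y$ a line bundle and allows one to read off its weight.
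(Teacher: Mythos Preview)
Your proof is correct and follows essentially the same strategy as the paper. The only presentational difference is in part~(3): the paper writes explicit coordinates $a,b,c,d,e,f$ on $\nf^2$ and verifies directly that the generator $af-cd$ of $\Ic_{C(\nf)}$ lies in $\Sym^2[(\nf^2)^*]$ with weight $\rho$, whereas you phrase the same computation coordinate-free via the one-dimensionality of $[\nf,\nf]$ and its weight $-\rho$; parts~(1), (2), and~(4) are identical to the paper's argument.
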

\begin{remark}
    Part~(1) is implicit in \cite{VilonenXue}, while part~(3) may be found in \cite{Ngo18}. We include a proof here
    since we also require the result in positive characteristic, as well as the versions with the dualizing sheaf.
\end{remark}
\begin{proof}
\begin{enumerate}
    \item We have that $Z$ is the total space of the $G$-equivariant vector bundle $\nf^2$, and $\nf^* \cong \gf/\bfrak$. Therefore 
    \[Z=\underline{\Spec}_{F}(\Sym[(\gf/\bfrak)^2]),\]
     and the result follows.
    \item Recall that $\omega_Z\cong\omega_{Z/F}\otimes_{\Oc_Z} \pi_Z^*\omega_F$ because $\pi_Z$ and $F$ are smooth. We also recall from \cite{Jantzen03} II. \S 4.2 that $\omega_F\cong \Oc(-2\rho)$. It remains to calculate
    $\omega_{Z/F}$. Notice that the relative tangent bundle has $T_{Z/F}=\pi_Z^*((\nf)^2)$, so $\omega_{Z/F}\cong
    \pi_Z^*\det((\gf/\bfrak)^2)=\pi_Z^*(\Oc(4\rho))$. Therefore
    \[\omega_Z\cong \pi_Z^*(\Oc(-2\rho)\otimes\Oc(4\rho))=\pi_Z^*(\Oc(2\rho)) = \Oc_Z(\rho).\]
    \item Write a general point of $\nf^2$ as 
    \[(M, N) = \left(\begin{pmatrix}
        0 & a & b \\ 0 & 0 & c \\ 0 & 0 & 0
    \end{pmatrix},\begin{pmatrix}
        0 & d & e \\ 0 & 0 & f \\ 0 & 0 & 0
    \end{pmatrix}\right)\]
    as in Lemma~\ref{lem:ZYproperties}. Then $a, b, \ldots, f$ are a basis for $(\nf^2)^*$. The ideal sheaf of $C(\nf)$ inside $\Sym[(\nf^2)^*]$ is principal, generated by $af - dc$, which is an element of $\Sym^2[(\nf^2)^*]$ of weight $\rho$.\footnote{Recall that our convention is that the weights of $\nf$ are \emph{negative}.} In other words, \[\Ic_{C(\nf)} \cong \Oc_{\nf \times \nf}(\rho)\] as $B$-equivariant sheaves on $\nf^2$. 
    Since $Y = G\times^B C(\nf)\subset G \times^B\nf^2 = Z$ we get that 
    \[\Ic_Y \cong \Oc_Z(\rho)\]
    as required.
    \item This follows from the adjunction formula (see \cite[\href{https://stacks.math.columbia.edu/tag/0AU3}{Section 0AU3}~(7)]{stacks-project})
    \[\omega_Y = i^*(\omega_Z\otimes_{\Oc_Z} \Ic_Y^\vee)\]
    and parts~(2) and~(3).\qedhere
\end{enumerate}
\end{proof}

\subsection{Proof of Theorem~\ref{thm:resolution}.} 

We first prove Theorem~\ref{thm:resolution} when $n = 2$. This essentially appears (in the context of ordinary
deformation rings when $l = p$) in section 3.3 of \cite{snowdenSingularitiesOrdinaryDeformation2018}, but we include the
proof here to illustrate the ideas in this setting and to prepare the ground for the more complicated case $n = 3$.  

\begin{proof}[Proof of Theorem~\ref{thm:resolution} when $n=2$.]
In this case $F=\PP^1$ and $Y=Z$ is the total space of the vector bundle $\nf^2$. As line
bundles on $\PP^1$, we see that \[\gf/\bfrak \cong \nf^*\cong \Oc(2)\] and $\Oc(\rho)\cong \Oc(1)$, so that
\[H^1(\PP^1,\Sym^r\left[(\gf/\bfrak)^2\right])=H^1(\PP^1,\Sym^r\left[(\gf/\bfrak)^2\right](2\rho))=0\] 
for every $r\geq0$. By Proposition~\ref{prop:sheafcalcs},  
\[H^1(Y,\Oc_Y)=H^1(F,\pi_*\Oc_Y)=0\] and
\[H^1(Y,\omega_Y)=H^1(F,\pi_*\omega_Y)=0.\]
Since $X$ is affine, $H^1(Y,\Oc_Y) = H^0(X, R^1f_*\Oc_Y)$ vanishes and so \[R^1f_*\Oc_Y = 0.\]
Similarly, $R^1f_*\omega_Y= 0$.

It remains to show that $f^*:H^0(X,\Oc_X)\hookrightarrow H^0(Y,\Oc_Y)$ is an isomorphism.
For convenience, we set $R=H^0(X,\Oc_X)$ and $\tilde{R}=H^0(Y,\Oc_Y)$.
We note that the natural map
\[H^0(F,\Sym\left[\gf^2\right]) = \F[\gf^2]\rightarrow R\] is surjective because $X$ is defined as a closed subscheme
of $\gf^2$ (and recall that we are identifying $\gf \cong \gf^*$ via the trace pairing). Let $I$ be the kernel of this
surjection. The composite 
\[H^0(F,\Sym\left[\gf^2\right])\rightarrow R\xrightarrow{f^*}\tilde{R}=H^0(F,\Sym\left[(\gf/\bfrak)^2\right])\]
is the morphism induced by the natural surjective map of coherent sheaves $\gf^2\rightarrow (\gf/\bfrak)^2$.

Letting $S=\F[\gf^2]$ and $S_+$ be the irrelevant ideal of $S$, Proposition 2.1.5 of
\cite{snowdenSingularitiesOrdinaryDeformation2018} gives us that 
\[\Tor^S_0(R,\F)=\tilde{R}/S_+\tilde{R}\cong H^0(F, \Oc_F)\oplus H^1(F, \bfrak^2).\] Because of the (non-equivariant) isomorphism of vector bundles $\bfrak\cong \Oc(-1)^2$, this shows
that $\tilde{R}/S_+\tilde{R}$ is $1$-dimensional. Hence the composite $\F[\gf^2] \rightarrow R \rightarrow \tilde{R}$ is
surjective. It follows that the map $H^0(X,\Oc_X)\rightarrow H^0(Y,\Oc_Y)$ is an isomorphism as required.
\end{proof}

\begin{remark}
    The above proof works equally well for the Steinberg component of the fixed-determinant moduli space in the case $l = 2$.
\end{remark}

Next we turn to the main case of interest for this article, $n = 3$; we defer the actual cohomological calculations until the next section.

\begin{proof}[Proof of Theorem \ref{thm:resolution} when $n=3$]
By Proposition~\ref{propYcohom} below, \[H^i(Y,\Oc_Y)=H^i(Y,\omega_Y)=0\] for all $i>0$. To prove the theorem, the only
thing that remains to check is that the natural morphism $f^*H^0(X,\Oc_X)\hookrightarrow H^0(Y,\Oc_Y)$, injective since
$Y \to X$ is birational, is an isomorphism.    

As in the case $n = 2$, define $S=\F[\gf^2]=\Sym[\gf^2]$, with irrelevant ideal $S_+$. Let $\tilde{R}=H^0(Z,\Oc_Z)$ for
$Z$ the total space of the vector bundle $\nf^2$ on $G/B$. By Lemma~\ref{lem:Zcohom}, $H^i(Z, \Oc_Z) = 0$ for $i > 0$.
We may therefore apply Proposition 2.1.5 of \cite{snowdenSingularitiesOrdinaryDeformation2018} to deduce that 
\[\tilde{R}/S_+\tilde{R}=\Tor^S_0(\tilde{R},\F)=\bigoplus  H^i(F,\Lambda^i[\bfrak^2])[i].\] By Calculation~\ref{calc2}
below, we know that $H^i(F,\Lambda^i[\bfrak^2])=0$ unless $i=0$, when \[H^0(F,\Lambda^0[\bfrak^2])=H^0(F,\F)=\F.\] Thus
the map $S\rightarrow \tilde{R}$ is surjective and, as $H^1(Z, \Ic_Y) = 0$ by Proposition~\ref{prop:sheafcalcs} and
Lemma~\ref{lem:Zcohom}, the map $\tilde{R} = H^0(Z, \Oc_Z)\rightarrow H^0(Y,\Oc_Y)$ is surjective. The composite map 
\[S \to H^0(Z, \Oc_Z) \to H^0(Y, \Oc_Y)\]
is equal to the composite 
\[S \to H^0(X, \Oc_X) \xrightarrow{f^*}H^0(Y,\Oc_Y),\] and so $f^*$ is surjective as required. 
\end{proof}

\section{Cohomology of sheaves on the flag variety when $n=3$}
\label{sec:cohom-calc}

We let $n = 3$ and $G = \SL_3$, and continue with the notation of section~\ref{sec:buns}.
Let $X(T)$ be the character lattice, $X^{\vee}(T)$ be the cocharacter lattice, and $\langle\,,\rangle:X(T)\times X^{\vee}(T)\rightarrow \Z$ be the natural pairing.
For a character $\lambda\in X(T)$, let $\FF(\lambda)$ be the corresponding representation of $B$ and recall that in section \ref{sec:buns} we have defined a line bundle $\Oc(\lambda)$ on $G/B$. We have chosen the system $\Phi^+$ positive roots such that the weights of $\nf$ are \emph{negative}. Our basic tool will be the Borel--Weil--Bott theorem, which holds in positive characteristic for `sufficiently small' weights.

Let 
    \[\overline{C}_{\Z}=\{\lambda\in X(T):0\leq \langle\lambda+\rho,\beta^{\vee}\rangle\leq l \textrm{ for all } \beta\in\Phi^+\},\]
    \[X(T)_+ = \{\lambda \in X(T) : \langle \lambda, \beta^{\vee}\rangle \ge 0 \text{ for all } \beta \in \Phi^+\}\},\]
    and $C^0_\ZZ = \overline{C}_{\Z} \cap X(T)_+$.
Recall the `dot action' of the Weyl group $W \cong S_3$ of $G$ on $X(T)$:
\[w\cdotp \lambda = w(\lambda + \rho) - \rho.\]

\begin{theorem}\label{thm:BWB}
    Let $\lambda\in \bar{C}_{\Z}$. 
   \begin{enumerate}
     \item If $\lambda \in C^0_\ZZ$ then $H^0(G/B, \Oc(\lambda))$ is an irreducible representation of $G$ that we denote by $V(\lambda)$.  
   \item The representations $V(\lambda)$ for $\lambda \in C^0_\Z$ are pairwise non-isomorphic.
       \item If $\lambda \not \in C^0_\ZZ$, then $H^i(G/B, \Oc(\lambda)) = 0$ for all $i$.
     \item If $w\in W$, then
    $H^i(G/B,\Oc(w\cdotp\lambda))=0$ unless $i=l(w)$, in which case
    \[H^{l(w)}(G/B,\Oc(w\cdotp\lambda))\cong H^0(G/B,\Oc(\lambda)).\]
   \end{enumerate} 
\end{theorem}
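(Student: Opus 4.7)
The plan is to recognise this as a standard positive-characteristic Borel--Weil--Bott theorem, which holds in this form precisely because every weight $\lambda \in \bar{C}_\Z$ lies in the closure of the fundamental $l$-alcove for the dot action of the affine Weyl group. Essentially the whole statement can be found in Chapter II of Jantzen's \emph{Representations of Algebraic Groups}, and my task will be to assemble the pieces in this language.

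For parts (1) and (2), I would first invoke Kempf's vanishing theorem: for $\lambda$ dominant, $H^i(G/B, \Oc(\lambda)) = 0$ for $i > 0$, and $V(\lambda) = H^0(G/B, \Oc(\lambda))$ is the induced module, whose socle is the simple module $L(\lambda)$ of highest weight $\lambda$. To upgrade this to $V(\lambda) = L(\lambda)$, I would apply the strong linkage principle: every composition factor $L(\mu)$ of $V(\lambda)$ satisfies $\mu \uparrow \lambda$ under the affine dot action. For $\lambda \in C^0_\Z$, any non-trivial affine reflection pushes $\lambda$ either above itself or outside the dominant cone, so $\lambda$ is the only dominant weight strongly linked to itself; hence $V(\lambda) = L(\lambda)$ is simple. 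Part (2) follows immediately, as simple $G$-modules of distinct highest weights are non-isomorphic.

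For part (3), suppose $\lambda \in \bar{C}_\Z \setminus C^0_\Z$, so $\lambda$ is not dominant: there is a simple root $\alpha$ with $\langle \lambda, \alpha^\vee \rangle < 0$. Combined with $\langle \lambda + \rho, \alpha^\vee \rangle \ge 0$, this forces $\langle \lambda + \rho, \alpha^\vee \rangle = 0$, so $\lambda$ lies on the wall fixed by the dot action of $s_\alpha$ and $\langle \lambda, \alpha^\vee \rangle = -1$. The projection $G/B \to G/P_\alpha$ to the minimal parabolic has $\PP^1$ fibres on which $\Oc(\lambda)$ restricts to $\Oc(-1)$, which has no cohomology; the Leray spectral sequence then yields $H^i(G/B, \Oc(\lambda)) = 0$ for all $i$. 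For part (4), I would induct on $l(w)$: the base case $w = e$ is part (1) combined with Kempf vanishing, and for the inductive step, picking a simple root $\alpha$ with $l(s_\alpha w) = l(w) - 1$, a standard $\PP^1$-fibre computation for $G/B \to G/P_\alpha$ (as in Jantzen~II.5.2) identifies the cohomology of $\Oc(w \cdot \lambda)$ with a degree shift of that of $\Oc(s_\alpha w \cdot \lambda)$, and the claim follows by induction. The main obstacle will be the strong linkage step in part (1): outside the bottom alcove the induced module $V(\lambda)$ can be a proper extension of simple modules and the statement fails, so the restriction to $\bar{C}_\Z$ is exactly what saves us.
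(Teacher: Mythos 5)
The paper's proof of this theorem is a one-line citation to Corollaries II.5.5 and II.5.6 of Jantzen's \emph{Representations of Algebraic Groups}, and your sketch correctly reconstructs the standard argument that lies behind those corollaries: Kempf vanishing, irreducibility in the closed bottom alcove, and the $\PP^1$-fibration shifting. The only quibble worth noting is that Jantzen establishes II.5.5--5.6 before the (strong) linkage principle of II.6, whereas you deduce irreducibility from strong linkage; this is a perfectly valid route (and is the cleaner one to explain), but it reverses Jantzen's logical order, and the claim that no dominant $\mu \neq \lambda$ satisfies $\mu \uparrow \lambda$ for $\lambda \in C^0_{\ZZ}$ needs a little more than a single-reflection observation, since $\uparrow$ involves chains of reflections. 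These are presentational points; the mathematical content matches the cited result.
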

\begin{proof}
   This is Corollary II.5.5 and Corollary II.5.6 of \cite{Jantzen03}.
\end{proof}

In what follows, for $V$ a representation of $B$, we will abbreviate $H^i(G/B, V)$ to $H^i(V)$. For $V$ a representation of $B$ and $\lambda$ a character of $T$ we write $V_{\lambda} = \{v \in V : tv =\lambda(v)t \text{ for all $t \in T$}\}$ for the $\lambda$-weight space of $V$. The multiset in which each character $\lambda$ of $T$ occurs $\dim(V_{\lambda})$ times is the multiset of weights of $V$; its elements are the weights of $V$.

We will call $\bigcup_{w \in W} w \cdot \bar{C}_{\Z}$ the \emph{BWB locus} and say that a representation $V$ of $B$ is \emph{BWB-good} if all of its weights are in the BWB locus. For $i\ge 0$ let $C^i_{\Z}=\coprod_{l(w)=i}w\cdotp C^0_{\Z}$. For $i \ge 0$ and $V$ a BWB-good representation of $B$, we let 
\[\psupp^i(V) = \bigcup_{l(w) = i} \{\lambda \in C^0_{\Z} : w\cdot\lambda \text{ is a weight of $V$}\},\]
viewed as a multiset where the multiplicity of $\lambda$ is the sum of the multiplicities of $w\cdot \lambda$ as weights of $V$. 

\begin{lemma}\label{lem:sup-lemma} Suppose that $V$ is a BWB-good representation of $B$ and $i \ge 0$.
\begin{enumerate}
\item  The $G$-representation $H^i(V)$ has a composition series in which all irreducible subquotients have the form $V(\lambda)$ for $\lambda \in \psupp^i(V)$, and each $V(\lambda)$ occurs with multiplicity at most the multiplicity of $\lambda$ in $\psupp^i(V)$.
\item  The representation $H^i(V)$ is completely reducible.
\end{enumerate}
\end{lemma}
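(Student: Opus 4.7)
The plan is to deduce both parts from the Borel--Weil--Bott theorem (Theorem~\ref{thm:BWB}) by filtering $V$ by one-dimensional $B$-subrepresentations, and then to upgrade the resulting composition series to a direct sum via the linkage principle.

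Since $U$ is unipotent, $V$ admits a $B$-stable filtration $0 = V_0 \subset V_1 \subset \cdots \subset V_m = V$ with $V_j/V_{j-1} \cong \F(\mu_j)$ for $T$-characters $\mu_j$ running over the weights of $V$ with multiplicity; at each stage one picks a $T$-character line in the nonzero $T$-representation $V_j^U$, which is automatically $B$-stable since $U$ is normal in $B$, and iterates on the quotient. For part (1) I would induct on $m$, applying $H^\bullet(F, -)$ to the short exact sequences $0 \to V_{j-1} \to V_j \to \F(\mu_j) \to 0$. By the BWB-good hypothesis and Theorem~\ref{thm:BWB}(4), each $H^i(\F(\mu_j))$ is either zero or is isomorphic to $V(\lambda)$ for the unique pair $(\lambda, w)$ with $\lambda \in C^0_\Z$, $w \in W$, $l(w) = i$, and $w \cdot \lambda = \mu_j$. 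Tracking this through the long exact sequences yields a composition series of $H^i(V)$ of the claimed form, with the multiplicity bounds matching the definition of $\psupp^i(V)$ by construction.

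For part (2), it suffices to show that $\Ext^1_G(V(\lambda), V(\mu)) = 0$ for all $\lambda, \mu \in C^0_\Z$, since complete reducibility then follows by induction on composition length. By the linkage principle \cite[II.6.17]{Jantzen03}, this $\Ext^1$ vanishes unless $\lambda$ and $\mu$ lie in a common orbit under the dot action of the affine Weyl group $W_l$; since $C^0_\Z \subset \bar{C}_\Z$ and $\bar{C}_\Z$ is a strict fundamental domain for this action, two distinct weights in $C^0_\Z$ are always unlinked. For $\lambda = \mu$, membership of $\lambda$ in the closure of the fundamental alcove forces $V(\lambda)$ to coincide with both the Weyl module $\Delta(\lambda)$ and the induced module $\nabla(\lambda) = H^0(G/B, \Oc(\lambda))$, and the general vanishing $\Ext^1_G(\Delta(\lambda), \nabla(\lambda)) = 0$ \cite[II.4.13]{Jantzen03} finishes the argument.

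The main obstacle is the self-Ext vanishing $\Ext^1_G(V(\lambda), V(\lambda)) = 0$ when $\lambda$ lies on a wall of $C^0_\Z$: there the linkage principle alone does not suffice (such a $\lambda$ can be stabilised by a nontrivial reflection in $W_l$), and the argument genuinely relies on the good-filtration theory of Weyl and induced modules in positive characteristic. Once this input is accepted, the remainder of the proof is routine bookkeeping through the long exact sequences from Step~1.
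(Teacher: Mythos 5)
Your proof is correct and follows essentially the same route as the paper: part~(1) via the long exact sequence applied to a composition series of $V$, and part~(2) via the $\Ext$-vanishing of \cite[Proposition II.4.13]{Jantzen03}. Note, though, that the observation you make only in the $\lambda = \mu$ case --- that $V(\lambda)$ coincides with both the Weyl module $\Delta(\lambda)$ and the induced module $\nabla(\lambda)$ when $\lambda \in C^0_\ZZ$ --- already gives $\Ext^i_G(V(\lambda), V(\mu)) = \Ext^i_G(\Delta(\lambda), \nabla(\mu)) = 0$ for \emph{all} $\lambda, \mu \in C^0_\ZZ$ and $i > 0$ directly from II.4.13, so the separate linkage-principle argument for $\lambda \neq \mu$ is correct but unnecessary.
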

\begin{proof}
Part~(1) results from repeatedly applying the long exact sequence in cohomology to a composition series for $V$. By \cite[Proposition II.4.13]{Jantzen03}, \[\Ext^i_G(V(\lambda),V(\mu)) = 0\] for all $\lambda, \mu \in C^0_{\ZZ}$ and $i > 0$. Part~(2)  follows from this and part~(1).
\end{proof}

If $V$ is a representation of $B$ then we let 
\[\chi(V) = \sum_{i \ge 0} (-1)^i[H^i(V)]\]
where the sum is taken in the Grothendieck group of the category of finite-dimensional representations of $G$. Formation of $\chi$ is additive in short exact sequences, and we therefore have 
\[\chi(V) = \sum_{i \ge 0} (-1)^i\sum_{\lambda \in \psupp^i(V)}[V(\lambda)]\]
for a BWB-good representation $V$ of $B$.

Finally, we will also need the fact (see \cite[Proposition I.4.8]{Jantzen03}) that, if $V$ is a $B$-representation and $W$ a $G$-representation, then $H^i(V\otimes W)\cong H^i(V)\otimes W$ as $G$-representations for all $i\ge 0$.

We let $L_1, L_2, L_3 \in X(T)$ be the characters taking $t \in T$ to its respective diagonal entries, labelled so that $L_1$ corresponds to the bottom right entry (!) and $L_3$ to the top left. Then $L_1$ and $-L_3$ generate the monoid $X(T)_+$ of dominant weights and we have $\rho = L_1 - L_3$. The positive simple roots are $\alpha = L_1 - L_2$ and $\beta = L_2 - L_3$. The lines $\{\mu : \pres{\mu + \rho, \kappa^\vee} = 0\}$ for $\kappa \in \{\alpha, \rho, \beta\}$ divide $X(T)\otimes \R$ into six regions. The weights strictly in the interior of each region are $w \cdot X(T)_+$ for some $w \in W$. Figure~\ref{fig:sl3} shows the BWB-locus when $p=5$ --- the interior and boundary of blue dashed region --- and shows the different $C^i_{\Z}$. Also shown are the weights $L_1,L_2,L_3$, $\alpha,\beta,\rho,-\rho$ and $-2\rho$; the weights of $\bfrak$ are coloured red and those of $\gf/\bfrak$ coloured green.

\begin{figure}[h!]
    \caption{The weight lattice of $SL_3$.}
    \label{fig:sl3}
    \includegraphics[width=0.85\textwidth]{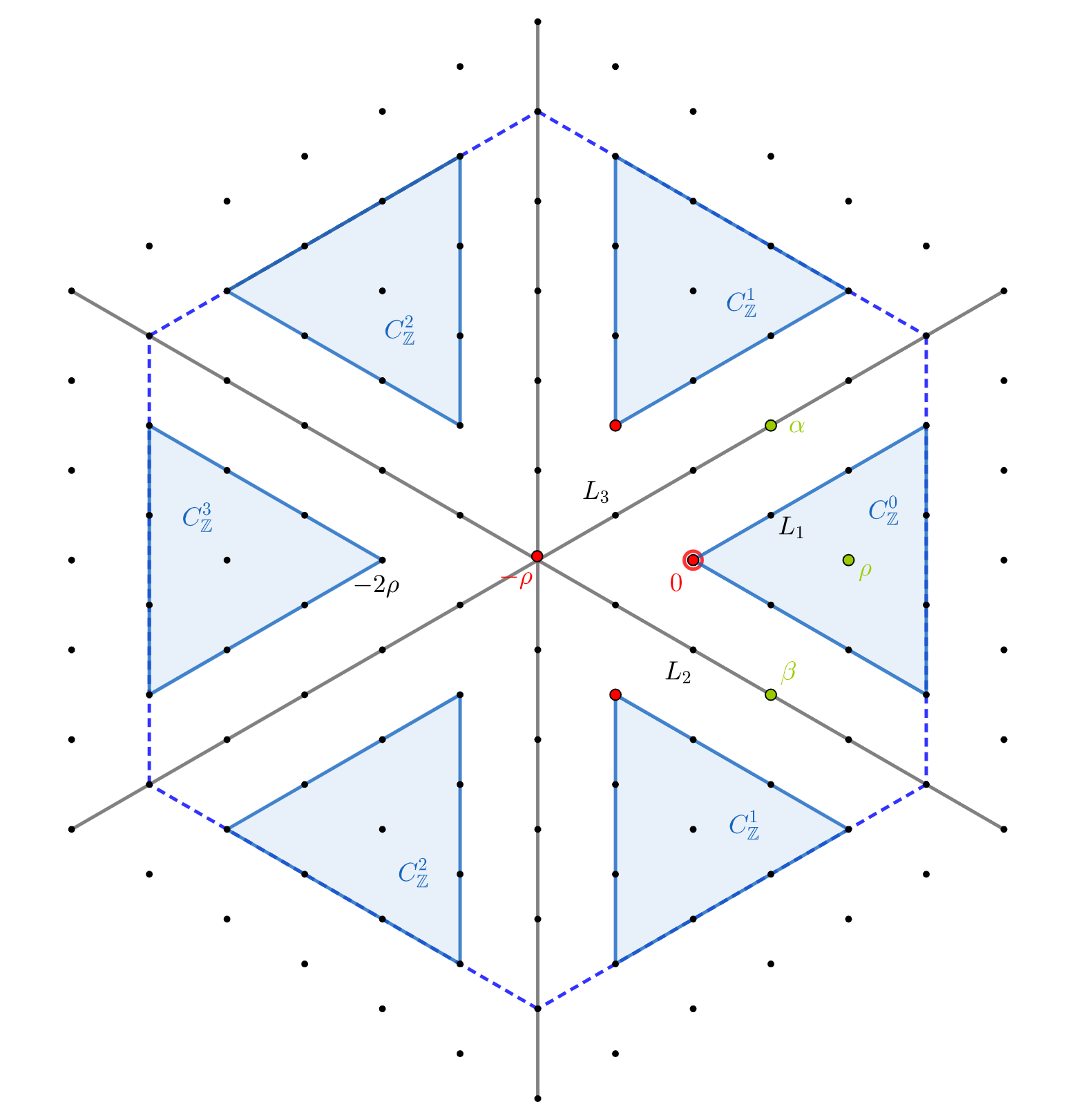}
\end{figure}

\newpage

\subsubsection{Calculating $H^i(\Lambda^j[\bfrak\oplus\bfrak])$}

\begin{calculation}{\label{calc1}}
Suppose that $l \ge 5$. Then the $G$-representations $H^i(\Lambda^j\bfrak)$ are as shown in Table~\ref{tab:calc1}.
Moreover, $H^i(\bfrak \otimes \bfrak) = H^i(\Lambda^2\bfrak)$ for all $i$.
\begin{table}[h!]
    \centering
    \begin{tabular}{c|cccc}
\diagbox{j}{i}   & $0$                    & $1$                    & $2$                    & $3$                    \\\hline
$0$   & $\F$                    & $\cdot$ & $\cdot$ & $\cdot$ \\
$1$   & $\cdot$ & $\cdot$ & $\cdot$ & $\cdot$ \\
$2$   & $\cdot$ & $\F$                     & $\cdot$ & $\cdot$ \\
$3$   & $\cdot$ & $\cdot$ & $\F$                     & $\cdot$ \\
$4$   & $\cdot$ & $\cdot$ & $\cdot$ & $\cdot$ \\
$5$   & $\cdot$ & $\cdot$ & $\cdot$ & $\F$                 
\end{tabular}
    \caption{The cohomology groups $H^i(\Lambda^j\bfrak)$.}
    \label{tab:calc1}
\end{table}
\end{calculation}
\begin{proof}
    If $l \ge 5$ then each of the representations $\bfrak$, $\gf/\bfrak$, $\bfrak \otimes \bfrak$ and $\bfrak \otimes \gf/\bfrak$ is BWB-good. We may therefore discuss the potential support of these representations and apply Lemma~4.2 to them.

    \begin{enumerate}[wide]
    \setcounter{enumi}{-1}
        
    \item When $j=0$, $\Lambda^j\bfrak=\F$. By Theorem~\ref{thm:BWB}, $H^i(\F)=0$ unless $i=0$, in which case $H^0(\F)=\F$.
    
    \item When $j=1$, we note that  
    $\psupp^2(\bfrak)=\psupp^3(\bfrak)=\emptyset$, so 
    \[H^2(\bfrak)=H^3(\bfrak)=0.\]

    We have \[\psupp^0(\bfrak) = \psupp^1(\bfrak) = \{0, 0\}\]
    and so all composition factors of $H^0(\bfrak)$ and $H^1(\bfrak)$ are trivial as $G$-representations.

    Further, there is a short exact sequence 
    \[0\rightarrow\bfrak\to \gf\to \gf/\bfrak\to 0\]
    which gives a long exact sequence in cohomology
    
    \[\begin{tikzcd}
	0 && {H^0(\bfrak)} & {} & {H^0(\gf)} && {H^0(\gf/\bfrak)} \\
	&& {H^1(\bfrak)} && {H^1(\gf)} && \hdots
	\arrow[from=1-3, to=1-5]
	\arrow[from=1-5, to=1-7]
	\arrow[from=1-1, to=1-3]
	\arrow[from=1-7, to=2-3,out=0, in=180,looseness=1]
	\arrow[from=2-3, to=2-5]
	\arrow[from=2-5, to=2-7]
\end{tikzcd}\]
    As $\gf$ is a $G$-representation, we see from part~(0) that $H^0(\gf)=\gf$ and $H^1(\gf)=0$. For $\gf/\bfrak$ we have $\psupp^0(\gf/\bfrak)=\{\rho\}$ and $\psupp^i(\gf/\bfrak) = 0$ for $i > 0$. Thus $H^0(\gf/\bfrak)\subset V(\rho) = \gf$. As all subquotients of $H^0(\bfrak)$ and $H^1(\bfrak)$ are the trivial representation, we must have $H^0(\bfrak) = H^1(\bfrak) = 0$ and $H^0(\gf/\bfrak) = \gf$.
    
    \item When $j=2$, consider the exact sequence
    \[0\rightarrow\bfrak\otimes\bfrak\to \gf\otimes\bfrak\to \gf/\bfrak\otimes\bfrak\to 0,\]
    from which we get a long exact sequence with parts 
    \[H^i(\gf\otimes\bfrak)\to H^i(\gf/\bfrak\otimes\bfrak)\to H^{i+1}(\bfrak\otimes\bfrak)\to H^{i+1}(\gf\otimes\bfrak).\]
    As $H^i(\bfrak)=0$ for all $i$, we obtain isomorphisms $H^i(\bfrak\otimes \gf/\bfrak)\cong H^{i+1}(\bfrak\otimes \bfrak)$ for all $i$. Thus, $H^0(\bfrak\otimes\bfrak)=0$ and $H^3(\bfrak\otimes \bfrak)=H^2(\gf/\bfrak\otimes\bfrak)=0$, since $\psupp^2(\gf/\bfrak\otimes\bfrak)=\emptyset$.

    To show that $H^2(\bfrak\otimes \bfrak) = 0$, consider first the exact sequence
    \[0\rightarrow\bfrak\otimes\gf/\bfrak\to \gf\otimes\gf/\bfrak\to \gf/\bfrak\otimes\gf/\bfrak\to 0\]
    giving rise to the long exact sequence
    \[0\to H^0(\bfrak\otimes\gf/\bfrak)\to \gf \otimes \gf\to H^{0}(\gf/\bfrak\otimes\gf/\bfrak)\to H^{1}(\bfrak\otimes\gf/\bfrak)\to 0.\]
    Here we have used that $H^1(\gf\otimes \gf/\bfrak)\cong \gf \otimes H^1(\gf/\bfrak)=0$, as $\psupp^1(\gf/\bfrak) = \emptyset$, and $H^0(\gf\otimes \gf/\bfrak)\cong \gf \otimes H^0(\gf/\bfrak)\cong\gf\otimes\gf$.
    Thus \[H^2(\bfrak \otimes \bfrak) \cong H^1(\bfrak\otimes \gf/\bfrak)\cong\coker(\gf\otimes\gf\to H^0(\gf/\bfrak\otimes \gf/\bfrak)).\]

    Since $\psupp^2(\bfrak \otimes \bfrak) = \{0,0\}$, every irreducible constituent of $H^2(\bfrak\otimes \bfrak)$ is trivial. So it is enough to show that $H^0(\gf/\bfrak\otimes \gf/\bfrak)$ does not have the trivial representation as a subquotient. If $l \ge 7$, this follows as $\gf/\bfrak \otimes \gf/\bfrak$ is BWB-good and $0 \not \in \psupp^0(\bfrak \otimes \gf/\bfrak)$. If $l = 5$ then every weight of $\gf/\bfrak \otimes \gf/\bfrak$ \emph{except} for $2\rho$ is in the BWB locus and nonzero, so cannot give rise to a trivial subquotient of $H^0(\gf/\bfrak \otimes \gf/\bfrak)$. Every other composition factor of $H^0(\gf/\bfrak\otimes \gf/\bfrak)$ is isomorphic to a composition factor of $H^0(\F(2\rho))$. By the strong linkage principle of \cite[Proposition II.6.13]{Jantzen03}, these are of the form $V(\rho)$ or $L(2\rho)$ (the irreducible representation with highest weight $2\rho$), and are therefore nontrivial as required.

     Finally, we compute 
     \[\chi(\bfrak\otimes\bfrak)=4[\FF]-4[\FF]-4[\FF]+2[\F]+2[\F]-[\F]=-[\F].\] Since $H^i(\bfrak\otimes \bfrak) = 0$ for $i \ne 1$, we deduce that $H^1(\bfrak\otimes \bfrak)=\F$. As $\Lambda^2\bfrak$ is a direct summand of $\bfrak\otimes \bfrak$ (as $l > 2$), it follows that $H^i(\Lambda^2\bfrak)=0$ when $i\neq 1$ and, as 
    \[\chi(\Lambda^2\bfrak)=[\F]-4[\F]+2[\F]=-[\F],\]
    $H^1(\Lambda^2\bfrak)= \F = H^1(\bfrak \otimes \bfrak)$.

    For later use, also note that we have shown $H^i(\bfrak \otimes \gf/\bfrak) = 0$ for $i > 0$; an Euler characteristic calculation then shows that $H^0(\bfrak \otimes \gf/\bfrak) = \F$.

    \item For $j = 3, 4, 5$ we have a $B$-equivariant pairing 
    \[\Lambda^j\bfrak \times \Lambda^{5-j}\bfrak \to \Lambda^5 \bfrak \cong \F(-2\rho)\]
    and so \[(\Lambda^j\bfrak)^* \otimes \F(-2\rho) \cong \Lambda^{5-j}\bfrak.\]
    Since $\omega_{G/B} \cong \F(-2\rho)$, Serre duality gives
    \[H^i(\Lambda^j\bfrak) \cong H^{3-i}(\Lambda^{5-j}\bfrak)^*\]
    and the result follows from parts 0--2.\qedhere
\end{enumerate}
\end{proof}

\begin{calculation}{\label{calc2}}
Suppose that $l \geq 5$. Then
\[H^i(\bfrak \otimes \Lambda^2\bfrak) = \begin{cases}
    \gf^2 \oplus \F & \text{for $i=2$} \\
    0 & \text{otherwise.}
\end{cases}\]
\end{calculation}

\begin{proof}
Firstly, we observe that $\bfrak\otimes \bfrak \otimes \gf$, and therefore any subquotient of it, is BWB-good for $l \ge 5$. This justifies the use of potential supports in the following calculation.

Considering the long exact sequence associated to
    \[0\to\Lambda^2\bfrak\otimes \bfrak\to \Lambda^2\bfrak\otimes\gf\to\Lambda^2\bfrak\otimes \gf/\bfrak\to 0\]
    along with the isomorphism $H^i(\gf\otimes \Lambda^2\bfrak)=\gf\otimes H^i(\Lambda^2\bfrak)$ and the results of Calculation~\ref{calc1}, we obtain an exact sequence 
    \[0\to H^0(\Lambda^2\bfrak\otimes\gf/\bfrak)\to H^1(\Lambda^2\bfrak\otimes\bfrak)\to \gf \to H^1(\Lambda^2\bfrak\otimes\gf/\bfrak)\to H^2(\Lambda^2\bfrak\otimes\bfrak)\to 0,\]
    and isomorphisms
    \[H^2(\Lambda^2\bfrak\otimes\gf/\bfrak)\cong H^3(\Lambda^2\bfrak\otimes\bfrak)\] and $H^0(\Lambda^2\bfrak\otimes\bfrak)=0$. 
    As $\psupp^2(\Lambda^2\bfrak\otimes\gf/\bfrak)=\emptyset$, we see that $H^3(\Lambda^2\bfrak\otimes\bfrak)=0$. 

    We calculate (using superscripts to denote multiplicity)
    \[\psupp^i(\Lambda^2\bfrak\otimes\bfrak) = \begin{cases} \{0^2\} & $i = 0$ \\
     \{0^{10}\} & i=1 \\
     \{0^{14}, \rho^2\} & i=2 \\
     \{0^7\} & i=3
    \end{cases}\] and so 
    \[\chi(\Lambda^2\bfrak \otimes \bfrak) = 2[\gf] - [\F].\]
    Since $\rho$ appears only in $\psupp^2$, we obtain that
    \[H^1(\Lambda^2\bfrak\otimes\bfrak) \cong \F^n\]
    and
    \[H^2(\Lambda^2\bfrak\otimes\bfrak)\cong\gf^2\oplus \F^{n+1}\] for some integer $n \ge 0$. It follows that 
    \[H^0(\Lambda^2\bfrak \otimes \gf/\bfrak)\cong  \F^n\]
    since the map $H^1(\Lambda^2\bfrak\otimes\bfrak)\to \gf$ must be zero.

    The $B$-representation  $\Lambda^2\bfrak\otimes\gf/\bfrak$ is a direct summand of  $\bfrak\otimes \bfrak\otimes\gf/\bfrak$, which fits into the short exact sequence 
    \[0\to\bfrak\otimes \bfrak\otimes \gf/\bfrak\to \gf\otimes \bfrak\otimes\gf/\bfrak\to\gf/\bfrak\otimes \bfrak\otimes \gf/\bfrak\to 0\]
    and so we have an exact sequence
    \[0 \to H^0(\bfrak\otimes \bfrak\otimes \gf/\bfrak)\into \gf\otimes H^0(\bfrak\otimes\gf/\bfrak)\cong \gf\]
    (the last isomorphism appearing in part~2 of the proof of Calculation~\ref{calc1}).
    As $H^0(\Lambda^2\bfrak\otimes \gf/\bfrak)\cong \F^n$ and $\F$ is not a subrepresentation of $\gf$, we deduce that $H^0(\Lambda^2\bfrak\otimes \gf/\bfrak)=H^1(\Lambda^2\bfrak\otimes \bfrak)=0$. \qedhere
\end{proof}
\begin{calculation}\label{calc:wedge4} \begin{enumerate}\item Suppose that $l \ge 5$. Then 
\[H^3(\Lambda^3\bfrak \otimes \bfrak) = 0.\] 
\item Suppose that $l \ge 5$. Then 
\[H^3(\Lambda^2 \bfrak \otimes \Lambda^2 \bfrak) = 0.\]
\end{enumerate}
\end{calculation}
\begin{proof}\begin{enumerate}[wide] \item We first show that $H^3(\Lambda^3\bfrak \otimes \bfrak) = 0$. By Serre duality, the pairing $\Lambda^3 \bfrak \times \Lambda^2\bfrak \to \Lambda^5\bfrak \cong \F(-2\rho)$, and the isomorphism $\omega_F \cong \Oc_F(-2\rho)$, we have 
\[H^3(\Lambda^3\bfrak \otimes \bfrak) \cong H^0(\Lambda^2\bfrak \otimes \bfrak^*)^*.\]
Since $\bfrak^* \cong \gf/\nf$, we wish to show that $H^0(\Lambda^2\bfrak \otimes \gf/\nf) = 0$. 
The last line of the proof of Calculation~\ref{calc2} shows that $H^0(\Lambda^2\bfrak \otimes \gf/\bfrak) = 0$, and by Calculation~\ref{calc1} $H^0(\Lambda^2\bfrak) = 0$. 
The result follows by tensoring the short exact sequence of $B$-representations
\[0 \to \bfrak/\nf \cong \F^2 \to \gf/\nf \to \gf/\bfrak \to 0\]
with $\Lambda^2\bfrak$ and applying $H^0(\cdot)$.
    
\item 
If $l\geq 5$ then $\Lambda^2\bfrak \otimes \Lambda^2\bfrak$ is BWB-good, which justifies the following argument in characteristic $l$. Note that the only weights in $\psupp^3(\Lambda^2\bfrak \otimes \Lambda^2\bfrak)$ are $0$ and $\rho$, so that $H^3(\Lambda^2\bfrak \otimes \Lambda^2\bfrak)$ is a direct sum of copies of $\F$ and $\gf$.

Let $e_{\pm\alpha}$, $e_{\pm\rho}$ and $e_{\pm\beta}$ be root vectors in $\gf$ (chosen to be elementary matrices) and let $P$ be the parabolic subgroup of $G$ whose Lie algebra is spanned by $\bfrak$ and $e_{\alpha}$, with unipotent radical $U_P$ and Levi quotient $M = P/U_P \cong GL_2$. We will use the following lemma of Demazure.
\begin{lemma}[``Easy Lemma'' of \cite{demazureVerySimpleProof1976}]\label{lem:dem}If $V$ is a representation of $B$ and $\lambda \in X(T)$ is such that $\pres{\lambda, \alpha^\vee} = 1$, and if $V(\lambda)$ extends to a representation of $P$,  then $H^i(V) = 0$ for all $i \ge 0$.
\end{lemma}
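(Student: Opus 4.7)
The plan is to push down along the projection $\pi \colon G/B \to G/P$, whose geometric fibres are $P/B \cong \PP^1$, and prove the stronger statement that $R^i\pi_* V = 0$ for every $i \ge 0$ (continuing to identify $B$-representations with their associated $G$-equivariant sheaves on $G/B$). The Leray spectral sequence --- equivalently, the identity $H^i(G/B, V) = H^i(G/P, R\pi_* V)$ --- then immediately yields the desired vanishing of $H^i(V)$.

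To compute $R^i\pi_* V$, we would write $V \cong W \otimes \F(-\lambda)$, where $W := V \otimes \F(\lambda)$ is the $P$-representation supplied by the hypothesis (viewed again as a $B$-representation by restriction). Because $W$ extends to $P$, the associated $G$-equivariant sheaf on $G/B$ is the $\pi$-pullback of the $G$-equivariant sheaf $W'$ on $G/P$ determined by $W$. Therefore
\[V \;\cong\; \pi^* W' \otimes \Oc_{G/B}(-\lambda),\]
and the projection formula reduces the task to showing $R^i\pi_* \Oc_{G/B}(-\lambda) = 0$ for all $i$. Since $\pi$ is a $\PP^1$-bundle, flat base change lets us check this fibrewise: the restriction of $\Oc_{G/B}(-\lambda)$ to each fibre $P/B \cong \PP^1$ is the line bundle of degree $\pres{-\lambda, \alpha^\vee} = -1$, namely $\Oc_{\PP^1}(-1)$, which has no cohomology in any degree.

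The main obstacle is purely bookkeeping. We must confirm that, under the paper's convention (weights of $\nf$ being negative), the character $-\lambda$ with $\pres{-\lambda, \alpha^\vee} = -1$ really does give $\Oc_{\PP^1}(-1)$ on the $\PP^1$-fibres of $\pi$ rather than $\Oc_{\PP^1}(+1)$, and likewise that the identification of sheaves pulled back from $G/P$ with representations that extend to $P$ is compatible with the conventions in section~\ref{sec:buns}. Once those signs are pinned down, the rest is Demazure's standard projection-formula argument.
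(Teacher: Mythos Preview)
The paper does not supply its own proof of this lemma; it merely cites it as the ``Easy Lemma'' of \cite{demazureVerySimpleProof1976}. Your argument is precisely Demazure's original one: factor $G/B \to G/P$ as a $\PP^1$-bundle, write $V \cong \pi^*W' \otimes \Oc(-\lambda)$ using the hypothesis that $V(\lambda)$ extends to $P$, apply the projection formula, and conclude from the vanishing of the cohomology of $\Oc_{\PP^1}(-1)$ on each fibre.

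Your flagged sign issue resolves correctly under the paper's conventions. Since dominant weights give line bundles with nonzero global sections (Theorem~\ref{thm:BWB}), a character $\mu$ with $\pres{\mu,\alpha^\vee} = m$ restricts to $\Oc_{\PP^1}(m)$ on the fibres of $\pi$; thus $\Oc(-\lambda)$ restricts to $\Oc_{\PP^1}(-1)$ as you claim, and the argument goes through unchanged.
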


The next lemma allows us to recognise certain representations that extend to $P$. 
\begin{lemma}\label{lem:extend} Suppose that $V$ is a representation of $B$ such that:
\begin{itemize}
    \item $\dim(V) \le l$
    \item $e_{-\beta}$ and $e_{-\rho}$ act as zero on $V$;
    \item There is a vector $v \in V$ of weight $\mu$ such that $\pres{\mu, \alpha} = \dim(V) - 1$ and such that 
    \[v, e_{-\alpha}v, \ldots, e_{-\alpha}^{\dim(V) -1}v\]
    is a basis of $V$.
\end{itemize}
Then $V$ extends to a representation of $P$.
\end{lemma}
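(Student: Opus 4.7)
The plan is to extend the $B$-action on $V$ to a $P$-action by descending to the Levi quotient $M = P/U_P \cong GL_2$. Write $d = \dim V$ and $v_i = e_{-\alpha}^i v$ for $0 \le i \le d-1$, so the $v_i$ form the given basis of $V$ with $T$-weights $\mu - i \alpha$; note that $e_{-\alpha} v_{d-1} = 0$ by weight considerations, since $\mu - d\alpha$ is not a weight of $V$.

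First I would check that $U_P$ acts trivially on $V$. Its Lie algebra is spanned by $e_{-\beta}$ and $e_{-\rho}$, which act as zero by hypothesis; since $U_P$ is a connected unipotent algebraic group, the trivial Lie algebra action integrates to the trivial group action. Hence $V$ descends along $B \twoheadrightarrow B/U_P$ to a representation of $B/U_P$, which is canonically a Borel subgroup of $M$. Extending the $B$-action on $V$ to $P$ is therefore equivalent to extending this descended $(B/U_P)$-action to an $M$-action and pulling back along $P \twoheadrightarrow M$.

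Next I would define a candidate $e_\alpha$-action by the standard $\mathfrak{sl}_2$ formula $e_\alpha v_i = i(d-i) v_{i-1}$ (and $e_\alpha v_0 = 0$). A direct check using $\pres{\mu, \alpha^\vee} = d-1$ shows that, together with the given actions of $e_{-\alpha}$ and $T$, this realizes $V$ as the $d$-dimensional irreducible module for the $\mathfrak{sl}_2$-triple $(e_\alpha, h_\alpha, e_{-\alpha})$ of highest weight $d-1$. Because $d \le l$, this $\mathfrak{sl}_2$-module is isomorphic to $\Sym^{d-1}$ of the standard representation, and hence integrates to an algebraic representation of the $SL_2$-subgroup of $M$. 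Combining with the $T$-action, which is compatible with the $\mathfrak{sl}_2$-action by the choice of weights, produces a representation of $M$ extending the Borel action; pulling back along $P \twoheadrightarrow M$ yields the desired $P$-representation.

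The main obstacle is verifying that the formula for $e_\alpha$ is compatible with all of the existing $B$-action relations and that the resulting $\mathfrak{sl}_2$-module integrates to an algebraic $SL_2$-representation rather than merely a Lie-algebra representation. This is where the hypothesis $\dim V \le l$ is essential: it guarantees that the scalars $i(d-i)$ are invertible in $\F$ for $0 < i < d$, and that the highest weight $d-1$ lies in the restricted region, so that the classical highest-weight theory produces an honest algebraic representation of $SL_2$.
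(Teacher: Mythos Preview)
Your overall strategy matches the paper's: show that $U_P$ acts trivially, descend $V$ to a representation of $B/U_P$, and identify this with the restriction of an irreducible $M\cong GL_2$-representation. The paper simply asserts that the third hypothesis, together with $d\le l$, forces $V$ to be such a restriction; your explicit construction of the $e_\alpha$-action is a more hands-on version of the same idea.

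There is, however, a genuine gap in your justification of the first step. The assertion that a trivial Lie-algebra action of a connected unipotent group integrates to a trivial group action is \emph{false} in characteristic $l$: for instance, $\mathbb{G}_a$ acts nontrivially on $\F^2$ by $t\mapsto\left(\begin{smallmatrix}1&t^l\\0&1\end{smallmatrix}\right)$, and this rational representation has zero differential. The correct argument uses the $T$-weight structure. In any rational $B$-representation the root subgroup $U_{-\gamma}$ acts through divided-power operators $e_{-\gamma}^{(k)}$, each shifting weight by $-k\gamma$. Since every weight of $V$ has the form $\mu-i\alpha$, no two weights differ by a nonzero multiple of $\beta$ or of $\rho$; hence $e_{-\beta}^{(k)}=e_{-\rho}^{(k)}=0$ for all $k\ge 1$, and $U_P$ acts trivially.

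The same subtlety recurs when you integrate your $\mathfrak{sl}_2$-action to $SL_2$: you must check that the resulting $U_{-\alpha}$-action agrees with the one $V$ already carries from $B$, not merely that the $e_{-\alpha}$-actions agree. This is precisely where $d\le l$ is used: for $0\le k<d\le l$ the relation $k!\,e_{-\alpha}^{(k)}=e_{-\alpha}^k$ determines the divided powers, while for $k\ge d$ they vanish by weight reasons. Thus $u_{-\alpha}(t)$ acts on $V$ as $\exp(t\,e_{-\alpha})$, which matches the action on $\Sym^{d-1}$ of the standard representation, and your identification goes through.
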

\begin{proof}Since $e_{-\beta}$ and $e_{-\rho}$ act as zero on $V$ and $\dim(V) \le l$, the root subgroups $U_{-\beta}$ and $U_{-\rho}$ act trivially on $V$, and therefore so does $U_P$. Thus the action of $B$ factors through the quotient $B \to B/U_P$. The third condition, together with the restriction $\dim(V) \le l$, implies that $V$ is the restriction to $B/U_P$ of an irreducible representation of $M \cong GL_2$ whose restriction to $M^{\mathrm{der}} \cong SL_2$ is the irreducible representation of highest weight $\dim(V)-1$. In particular, $V$ extends to $P$.
\end{proof}

We first show that $H^3(\Lambda^2\bfrak \otimes \Lambda^2\bfrak)$ does not contain the trivial representation.
Let $W_1 \subset \Lambda^2\bfrak \otimes \Lambda^2\bfrak$ be the $\F$-span of all weight vectors of weights in
\[\{-\rho - 2\alpha, -2\rho-2\alpha,-2\rho-\alpha,-3\rho,-2\rho-\beta,-2\rho-2\beta, -\rho-2\beta\}\]
and let $W_2$ be the $\F$-span of $W_1$ and all weight vectors of weights in $\{-2\rho, -\rho - \alpha, -\rho-\beta\}$. Then $W_1$ and $W_2$ are $B$-subrepresentations of $W_3 = \Lambda^2\bfrak \otimes \Lambda^2\bfrak$. Let $V = W_2/W_1$. Because $0$ is not contained in $\psupp^3(W_1)$ or $\psupp^3(W_3/W_2)$, to show that $H^3(W_3)$ contains no copy of the trivial representation it is enough to show that $H^3(V) = 0$.

Consider the weight space $V_{-\rho-\beta}\subseteq V$. The $B$ representation generated by $V_{\rho-\beta}$ is $V^{\beta}:=V_{-\rho-\beta}\oplus e_{-\alpha}(V_{-\rho-\beta})$, and this representation splits as a direct sum into copies of $\F(-\rho-\beta)$ and $2$-dimensional representations satisfying the hypotheses of Lemma~\ref{lem:extend}. Thus, $H^3(V^{\beta})=0$ by Lemma~\ref{lem:dem}. Analogously, consider the $B$-subrepresentation $V^{\alpha}\subseteq V/V^{\beta}$ generated by $V_{-\rho-\alpha}$. In the exact same way  (switching the roles of $\alpha$ and $\beta$ in Lemmas~\ref{lem:dem} and~\ref{lem:extend}), we see that $H^3(V^{\alpha})=0$. To show that $H^3(V)=0$, it remains simply to show that $V^{\alpha}=V/V^{\beta}$, or equivalently, that $V_{-2\rho}=e_{-\alpha}(V_{-\rho-\beta})+e_{-\beta}(V_{-\rho-\alpha})$.

This can be calculated via a direct calculation by showing that each of the $17$ simple tensors which span $V_{-2\rho}$ are in this joint image. We outline this calculation. Set $f_{\gamma}=e_{-\gamma}\in \bfrak$ (reserving $e_{-\gamma}=[f_{\gamma},\textunderscore]$ for the linear map) and set $t_{\alpha},t_{\beta}\in\tf$ a basis of weight $0$ vectors such that $e_{-\nu}(t_{\mu})=\delta_{\nu,\mu}f_{\nu}$ in $\bfrak$ for $\nu,\mu\in\{\alpha,\beta\}$. (Such a basis exists when the characteristic $l\neq 3$). Then
\begin{align*}
    (f_{\alpha}\wedge f_{\rho})\otimes(t_{\alpha}\wedge f_{\beta})&=e_{-\beta}\Big[(f_{\alpha}\wedge f_{\rho})\otimes(t_{\alpha}\wedge t_{\beta})\Big] \\
    (f_{\alpha}\wedge f_{\rho})\otimes(t_{\beta}\wedge f_{\beta})&=\frac{1}{2}e_{-\alpha}\Big[(t_{\alpha}\wedge f_{\rho})\otimes(t_{\beta}\wedge f_{\beta}) \\ &
    \textcolor{red}{+(f_{\beta}\wedge f_{\alpha})\otimes(t_{\beta}\wedge f_{\beta})}
    +(f_{\beta}\wedge t_{\alpha})\otimes(t_{\beta}\wedge f_{\rho})\Big] \\
    (t_{\alpha}\wedge f_{\rho})\otimes(f_{\alpha}\wedge f_{\beta})&=\frac{1}{2}e_{-\beta}\Big[\textcolor{red}{(t_{\alpha}\wedge f_{\alpha})\otimes(t_{\beta}\wedge f_{\rho})} \\ &
    +(t_{\alpha}\wedge f_{\alpha})\otimes(f_{\alpha}\wedge f_{\beta})
    \textcolor{red}{+(t_{\alpha}\wedge f_{\rho})\otimes(f_{\alpha}\wedge t_{\beta})}\Big].
\end{align*}
These three equations give us $12$ linearly independent basis elements in the image when we consider the two independent symmetries of swapping the left and right sides of the tensor, and the symmetry interchanging $e_{-\alpha}$ and $-e_{-\beta}$, the effect of which is to flip the signs of the terms in red. We have further:
\begin{align*}
    (t_{\alpha}\wedge f_{\rho})\otimes(t_{\alpha}\wedge f_{\rho})&=e_{-\beta} \Big[\textcolor{red}{(t_{\alpha}\wedge f_{\alpha})\otimes(t_{\alpha}\wedge f_{\rho})}\Big] \\
    (t_{\alpha}\wedge f_{\rho})\otimes(t_{\beta}\wedge f_{\rho})&=\frac{1}{2}e_{-\beta}\Big[\textcolor{red}{(t_{\alpha}\wedge f_{\alpha})\otimes(t_{\beta}\wedge f_{\rho})} \\ &
    +(t_{\alpha}\wedge f_{\alpha})\otimes(f_{\alpha}\wedge f_{\beta})
    \textcolor{red}{-(t_{\alpha}\wedge f_{\rho})\otimes(f_{\alpha}\wedge t_{\beta})}\Big]
\end{align*}
again with the two conjugates (swapping $\alpha\leftrightarrow \beta$ and flipping the signs of the terms in red) giving us in total $16$ of the simple tensor spanning $V_{-2\rho}$.

The last basis element can be expressed as 
\begin{align*}
(f_{\alpha}\wedge f_{\beta})\otimes (f_{\alpha}\wedge f_{\beta})&=e_{-\beta}\Big[(f_{\alpha}\wedge f_{\beta})\otimes(f_{\alpha}\wedge t_{\beta})\Big] \\ &-(f_{\alpha}\wedge f_{\beta})\otimes (f_{\rho}\wedge t_{\beta})-(f_{\rho}\wedge f_{\beta})\otimes(f_{\alpha}\wedge t_{\beta}) 
\end{align*}
with the last two terms inside $e_{-\alpha}V_{-\rho-\beta}+e_{-\beta}(V_{\rho-\alpha})$ as previously considered. Since $V_{-2\rho}$ is a $17$ dimensional vector space, this shows that $V_{-2\rho}=e_{-\alpha}(V_{-\rho-\beta})+e_{-\beta}(V_{-\rho-\alpha})$ and, consequently, $H^3(V)=0$.

We have now shown that $H^3(\Lambda^2\bfrak \otimes \Lambda^2\bfrak)\cong \gf^s$ for some $s \ge 0$. The only weight of $\Lambda^2\bfrak \otimes \Lambda^2\bfrak$ remaining that can contribute to $H^3$ is $-3\rho$, which occurs with multiplicity two; thus $s \le 2$ and we wish to show that $s = 0$.

Let $\mu = -2\beta - \rho$. In $\Lambda^2 \bfrak \otimes \Lambda^2\bfrak$ let 
\[v = (e_{-\rho}\wedge e_{-\beta}) \tensor (e_{-\beta}\wedge t_{\alpha})\]
and 
\[v' = (e_{-\beta}\wedge t_{\alpha})\tensor (e_{-\rho}\wedge e_{-\beta}).\]
Let $\tilde{V}$ be the $B$-representation generated by $v$, and let $V = \tilde{V}_{U_P}$ be its $U_P$-coinvariants. The weights of $V$ are a subset of 
\[\{\mu = -2\beta - \rho, \mu - \alpha, \mu - 2\alpha = -3\rho\}. \] Since 
\[e_{-\alpha}^2(v) = 2(e_{-\rho} \wedge e_{-\beta}) \otimes (e_{-\rho} \wedge e_{-\alpha})\ne 0\]
and $\pres{\mu + L_1, \alpha} = 2$, Lemma~\ref{lem:extend} applies to show that $V(L_1)$ extends to a representation of $P$ and hence that $H^i(V) = 0$ for all $i$. 

Now, $\psupp^3(\ker(\tilde{V}\rightarrow V)) = \emptyset$ and so, as $H^i(V) = 0$ for all $i$,
\[H^3(\tilde{V}) = 0.\]
Constructing $\tilde{V}'$ similarly from $v'$ we see that $H^3(\tilde{V}') = 0$. Let 
\[Q = (\Lambda^2\bfrak \otimes \Lambda^2\bfrak)/(\tilde{V} + \tilde{V}').\]
Then \[H^3(Q) \cong H^3(\Lambda^2\bfrak \otimes \Lambda^2\bfrak) \cong \gf^s.\]
However, $\tilde{V} + \tilde{V}'$ contains the entire two-dimensional weight space of $\Lambda^2\bfrak \otimes \Lambda^2\bfrak$ of weight $-3\rho$, and so $\rho \not \in \psupp^3(Q)$. Thus $s = 0$, as required. \qedhere
  
\end{enumerate}
\end{proof}
\begin{calculation}{\label{calc3}}
    Suppose that $l \ge 5$, $0 \le i \le 3$, and $0 \le j \le 4$. 
    Then the cohomology group $H^i(\Lambda^j(\bfrak \oplus \bfrak))$ is as given in Table~\ref{tab:calc3}. 
    \begin{table}[h!]
        \centering
\begin{tabular}{l|cccc}
 \diagbox{j}{i} & 0       & 1       & 2               & 3       \\ \hline
 0 & $\F$     & $\cdot$ & $\cdot$         & $\cdot$ \\
 1 & $\cdot$ & $\cdot$ & $\cdot$         & $\cdot$ \\
 2 & $\cdot$ & $\F^3$   & $\cdot$         & $\cdot$ \\
 3 & $\cdot$ & $\cdot$ & $\gf^4\oplus \F^4$ & $\cdot$ \\
 4 & ? & ? & ? & $\cdot$ \\
\end{tabular}
\caption{The group $H^i(\Lambda^j[\bfrak \oplus \bfrak])$. Cohomology groups which are zero are denoted by a dot, while those which are unknown are denoted by a question mark.}
              \label{tab:calc3}
    \end{table}
    
\end{calculation}

\begin{proof}
This is immediate from Calculations~\ref{calc1}, \ref{calc2} and~\ref{calc:wedge4} and the decompositions
\begin{align*}
    \Lambda^2(\bfrak \oplus \bfrak) &= \left(\Lambda^2\bfrak\right)^2 \oplus \bfrak \otimes \bfrak,\\
\Lambda^3(\bfrak \oplus \bfrak) &= \left(\Lambda^3 \bfrak\right)^2 \oplus \left(\Lambda^2\bfrak \otimes \bfrak\right)^2\\ \intertext{and}
\Lambda^4(\bfrak \oplus \bfrak) &= \left(\Lambda^4 \bfrak\right)^2 \oplus \left(\Lambda^3\bfrak \otimes \bfrak\right)^2 \oplus \left(\Lambda^2\bfrak \otimes \Lambda^2\bfrak\right)\\
\end{align*}
which hold for $l \ge 5$.\qedhere
\end{proof}

\begin{remark}\label{rmk:hemelsoet-voorhaar} It appears that Calculation~\ref{calc:wedge4} is near the limit of what our \emph{ad hoc} methods can handle.
    Using the programs of Hemelsoet and Voorhaar \cite{hemelsoetComputerAlgorithmBGG2021}, available at \url{https://github.com/RikVoorhaar/bgg-cohomology}, it is easy to verify all of the previous calculations over a field of characteristic zero. This implies that these calculations are correct in sufficiently large positive characteristic; however, it is not clear to us how to make this effective (their algorithm relies on the BGG resolution).
\end{remark}

\subsubsection{Twists}

\begin{calculation}\label{calc:twists} Let $\lambda \in X(T)_+$ be a nonzero dominant weight. Suppose that $l \ge 5$. Then:
\begin{enumerate}\setcounter{enumi}{-1}
    \item $H^i(\F(\lambda)) = 0$ for all $i > 0$;
    \item $H^i(\bfrak(\lambda)) = 0$ if $i > 1$, and $H^1(\bfrak(\lambda)) = 0$ if $\lambda \ge \rho$ (in the usual partial order on weights) or if $l \ge 2 + \max(\pres{\lambda, \alpha^\vee}, \pres{\lambda, \beta^\vee})$;
    \item $H^i(\Lambda^2[\bfrak \oplus \bfrak](\lambda)) = 0$ if $i \ge 2$;
    \item $H^3(\Lambda^3[\bfrak \oplus \bfrak](\lambda)) = 0$.
\end{enumerate}

\end{calculation}
\begin{proof}
    Recall that $H^i(\F(\lambda)) = 0$ for all $i > 0$ by Kempf's vanishing theorem \cite[Proposition II.4.5]{Jantzen03}. This deals immediately with part~(0). Recall also from \cite[Proposition II.5.4]{Jantzen03} that, if $\mu \in X(T)$ with $\pres{\mu, \kappa^\vee} = -1$ for some simple root $\kappa$, then $H^i(\F(\mu)) = 0$ for all $i \ge 0$.

    For part~(1), if $\lambda \geq \rho$ then every weight $\mu$ of $\bfrak(\lambda)$ lies in $\bar{C}_\Z$, and we conclude that $H^i(\bfrak(\lambda)) = 0$ for all $i > 0$. Otherwise, we have (without loss of generality) that $\lambda = aL_1$; assume this now.

     For every weight $\mu$ of $\bfrak(\lambda)$, we have $\mu \in \bar{C}_\Z$ (in which case $H^i(\F(\mu)) = 0$ for all $i > 0$), or $\mu \in C^1_\Z$ and $\pres{\mu + \rho, \beta^\vee} = -1$. By \cite[Proposition II.5.4~(d)]{Jantzen03}, we have \[H^i(\F(\mu)) \cong H^{i-1}(\F(s_\alpha\cdot \mu))= 0\] for all $i > 1$. It follows that $H^i(\bfrak(\lambda)) = 0$ for all $i\ge 2$.

    Suppose now that $l \ge 2 + a$. Then $\bfrak(\lambda)$ is BWB-good and $\psupp^1(\bfrak(\lambda)) = \{\lambda\}$. On the other hand, from the short exact sequence 
\[0 \to \bfrak(\lambda) \to \gf(\lambda) \to (\gf/\bfrak)(\lambda) \to 0\]
we obtain a surjection $H^0((\gf/\bfrak)(\lambda)) \onto H^1(\bfrak(\lambda))$. But, using that $\lambda = aL_1$, $\psupp^0((\gf/\bfrak)(\lambda)) = \{\rho + \lambda, \beta + \lambda\}$ which is disjoint from $\{\lambda\}$, and so
\[H^1(\bfrak(\lambda)) = 0\]
as required.

    For part~(2), as in the paragraph before last, it is enough to consider weights $\mu$ of $\Lambda^2[\bfrak \oplus \bfrak](\lambda)$ with $\pres{\mu + \rho, \alpha^\vee} < 0$. Any such weight $\mu$ either lies in $s_\alpha \cdot \bar{C}_\Z$, or is equal to $-2\rho - L_3$ and so lies on the line $\pres{\mu + \rho, \beta^\vee} = 0$. In the latter case, $H^i(\F(\mu)) = 0$ for all $i$; in the former, as $\pres{\mu + \rho, \alpha^\vee} \ge -3$ and $l \ge 3$, \cite[Proposition II.5.4~(d)]{Jantzen03} implies that 
    \[H^i(\F(\mu)) \cong H^{i-1}(\F(s_\alpha \cdot \mu)) = 0\]
    for all $i \ge 2$. Thus $H^i(\Lambda^2[\bfrak \oplus \bfrak](\lambda)) = 0$ for all $i \ge 2$.

Finally, for part~(3), every weight $\mu$ of $\Lambda^3[\bfrak\oplus \bfrak](\lambda)$ lies in some $w\cdot\bar{C}_\Z$ for $l(w) \le 2$ and satisfies $\pres{\mu + \rho, \kappa^\vee} \le 4 < l$ for each negative root $\kappa$. We can therefore apply \cite[Proposition II.5.4~(d)]{Jantzen03} $l(w)$ times and deduce that $H^3(\F(\mu)) = 0$. Thus $H^3(\Lambda^3[\bfrak \oplus \bfrak](\lambda)) = 0$.
\end{proof}

\begin{calculation}\label{calc:alpha-twists} Let $\lambda \in X(T)_+$. If $l \ge 3$ then 
\[H^i(\Lambda^j[\bfrak\oplus \bfrak](\lambda + \alpha)) =H^i(\Lambda^j[\bfrak\oplus \bfrak](\lambda + \beta))=  0\]
whenever $i > j$.

If $l \ge 7$ and $i \ne 1$, then
\[H^i(\Lambda^i[\bfrak \oplus\bfrak](\alpha)) = H^i(\Lambda^i[\bfrak \oplus\bfrak](\beta))= 0,\]
while
\[H^1(\bfrak(\alpha)) \cong H^1(\bfrak(\beta)) \cong \gf.\]
\end{calculation}
\begin{proof}
    The statement in the case $i > j$ can be proved by applying \cite[Proposition II.5.4]{Jantzen03} exactly as in Calculation~\ref{calc:twists}, and we omit the details. We therefore focus on the case $i = j$; by symmetry, it is enough to deal with $\alpha$. The reader may check that, in the calculations below, every representation to which we apply $\psupp$ is BWB-good for $l\ge 7$.

    If $i = 0$ then $H^0(\F(\alpha)) = 0$ as $\alpha$ is on the boundary of $\bar{C}_\Z$.

    We have already shown that $H^i(\bfrak(\alpha)) = 0$ for $i > 1$.  From the inclusion $\bfrak \into \gf$ we obtain an inclusion
    \[H^0(\bfrak(\alpha)) \into \gf\otimes H^0(\F(\alpha)) = 0\]
    and so $H^0(\bfrak(\alpha)) = 0$. Computing the Euler characteristic shows that $H^1(\bfrak(\alpha)) \cong \gf$.

    We next show that $H^1((\gf/\bfrak)(\alpha)) = 0$. Indeed, this sits in a short exact sequence
    \[\gf\otimes H^1(\F(\alpha)) \to H^1((\gf/\bfrak)(\alpha)) \to H^2(\bfrak(\alpha))\]
    and the outer terms are both zero.

    Now, from the exact sequence \[0 \to \bfrak \otimes \bfrak(\alpha) \to \bfrak \otimes \gf(\alpha)\to (\bfrak \otimes \gf/\bfrak)(\alpha) \to 0\] and the fact that $H^2(\bfrak(\alpha)) = 0$, we obtain a surjection
    \[H^1((\bfrak \otimes \gf/\bfrak)(\alpha)) \onto H^2(\bfrak \otimes \bfrak(\alpha))\]
    As $\psupp^2(\bfrak\otimes \bfrak(\alpha)) = \{0\}$ we have that $H^2(\bfrak\otimes \bfrak(\alpha)) \subset \F$. 

    From the short exact sequence $0 \to (\bfrak \otimes \gf/\bfrak)(\alpha) \to (\gf \otimes \gf/\bfrak)(\alpha)\to (\gf/\bfrak \otimes \gf/\bfrak)(\alpha) \to 0$ and the vanishing of $H^1((\gf/\bfrak)(\alpha))$ we get a surjection
    \[H^0((\gf/\bfrak \otimes \gf/\bfrak)(\alpha)) \onto H^1(\gf/\bfrak \otimes \bfrak(\alpha))\]
    and hence a surjection
    \[H^0((\gf/\bfrak \otimes \gf/\bfrak)(\alpha)) \onto H^2(\bfrak \otimes \bfrak(\alpha)).\]
    But $\psupp^0(\gf/\bfrak \otimes \gf/\bfrak(\alpha))$ does not contain $\{0\}$, so we must have
    \[H^2(\bfrak \otimes \bfrak(\alpha))= 0\]
    from which we can obtain as in Calculation \ref{calc3}
    \[H^2(\Lambda^2[\bfrak\oplus \bfrak](\alpha))=0.\]

    Finally we do the case $i = j = 3$. Here $\psupp^3(\Lambda^3\bfrak(\alpha)) = \emptyset$ so it is enough to show that
    \[H^3(\Lambda^2\bfrak \otimes \bfrak(\alpha)) = 0.\]
    From the usual exact sequence and the vanishing of $H^3(\Lambda^2\bfrak(\alpha))$ we obtain a surjection
    \[H^2((\Lambda^2\bfrak \otimes \gf/\bfrak)(\alpha)) \onto H^3(\Lambda^2\bfrak \otimes \bfrak(\alpha)).\]
    But $\psupp^2(\Lambda^2\bfrak \otimes \gf/\bfrak(\alpha)) = \emptyset$ and so
    \[H^2((\Lambda^2\bfrak \otimes \gf/\bfrak)(\alpha)) = H^3(\Lambda^2\bfrak \otimes \bfrak(\alpha)) = 0.\qedhere\]
    \end{proof}

\subsection{Results for the cohomology of sheaves on $Z$ and $Y$.}

We now use the above results to calculate cohomology groups of particular coherent sheaves on $Z$ and $Y$.

\begin{lemma}\label{lem:Zcohom}
    Let $\lambda \in X(T)_+$ and suppose that $l \ge 5$. 
    Then \[H^i(Z,\Oc_Z(\lambda)) = H^i(Z, \Oc_Z(\lambda + \alpha)) = H^i(Z, \Oc_Z(\lambda + \beta))=0\] for all $i>0$.
\end{lemma}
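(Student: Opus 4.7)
The plan is to reduce to cohomology on the flag variety $F$, and then, via a Koszul-type spectral sequence, to the cohomology of $\Lambda^j \bfrak^2(\mu)$, which has been controlled by the calculations earlier in the section.

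Since $\pi_Z : Z \to F$ is affine, Proposition~\ref{prop:sheafcalcs}(1) and the projection formula give, for any weight $\mu$,
\[H^i(Z, \Oc_Z(\mu)) = \bigoplus_{r \ge 0} H^i(F, \Sym^r[(\gf/\bfrak)^2](\mu)),\]
so it suffices to check the vanishing in each symmetric degree $r \ge 0$. The short exact sequence of $B$-representations $0 \to \bfrak^2 \to \gf^2 \to (\gf/\bfrak)^2 \to 0$ splits as vector spaces, so $\bfrak^2$ forms a regular sequence in $\Sym^\bullet \gf^2$, and the associated Koszul complex
\[K^\bullet: \quad \cdots \to \Lambda^j \bfrak^2 \otimes \Sym^{r-j} \gf^2 \to \cdots \to \bfrak^2 \otimes \Sym^{r-1} \gf^2 \to \Sym^r \gf^2,\]
with $\Sym^r \gf^2$ placed in degree $0$ and $\Lambda^j \bfrak^2 \otimes \Sym^{r-j} \gf^2$ in degree $-j$, is a $B$-equivariant resolution of $\Sym^r[(\gf/\bfrak)^2]$.

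Twisting $K^\bullet$ by $\Oc(\mu)$ and taking hypercohomology yields a spectral sequence with
\[E_1^{-j, q} = \Sym^{r-j} \gf^2 \otimes H^q(F, \Lambda^j \bfrak^2(\mu))\]
— the factor $\Sym^{r-j}\gf^2$ comes outside the cohomology because it is a $G$-representation — converging to $H^{q-j}(F, \Sym^r[(\gf/\bfrak)^2](\mu))$. Hence vanishing of the target in positive degree reduces to
\[H^q(F, \Lambda^j \bfrak^2(\mu)) = 0 \quad \text{whenever } q > j \ge 0.\]
For $j \ge 3$ this is automatic since $\dim F = 3$. For $j \in \{0, 1, 2\}$ with $\mu = \lambda$ I would combine Kempf vanishing (when $j = 0$) with Calculations~\ref{calc1} and~\ref{calc3} in the case $\lambda = 0$, and with Calculation~\ref{calc:twists} for $\lambda \ne 0$. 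For the twists $\mu = \lambda + \alpha$ and $\mu = \lambda + \beta$, the statement is precisely Calculation~\ref{calc:alpha-twists}.

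The main obstacle is the borderline case $j = 2$, $q = 3$: the vanishing $H^3(\Lambda^2 \bfrak^2(\mu)) = 0$ sits at the very top of the cohomological range of $F$, so one cannot get it from a dimension count, and establishing it for all three choices of $\mu$ is the real substance of the more delicate Calculations~\ref{calc:twists}~(2) and~\ref{calc:alpha-twists}. Once those are in hand, the Koszul spectral sequence mechanically packages everything into a uniform argument handling $\mu = \lambda,\ \lambda + \alpha,\ \lambda + \beta$ simultaneously.
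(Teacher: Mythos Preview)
Your argument is correct and follows the same route as the paper: reduce via affineness of $\pi_Z$ to $H^i(F,\Sym^r[(\gf/\bfrak)^2](\mu))$, resolve by the Koszul complex of $0\to\bfrak^2\to\gf^2\to(\gf/\bfrak)^2\to 0$, and use the resulting spectral sequence to reduce to $H^q(\Lambda^j\bfrak^2(\mu))=0$ for $q>j$, which is supplied by Calculations~\ref{calc3}, \ref{calc:twists}, and~\ref{calc:alpha-twists}. Your observation that the case $j\ge 3$ is automatic from $\dim F=3$ is a clean simplification that the paper leaves implicit (indeed, part~(3) of Calculation~\ref{calc:twists} is not actually needed for this lemma, since it addresses $q=j=3$ rather than $q>j$).
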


\begin{proof}
    Because the map $\pi_Z:Z\rightarrow F$ is affine, from Proposition~\ref{prop:sheafcalcs} we have the following isomorphisms of cohomology groups:
    \[H^i(Z,\Oc_Z(\lambda))=H^i(F,\pi_{Z,*}[\Oc_Z(\lambda)])=H^i(F, \Sym([\gf/\bfrak]^2)(\lambda))\]
    So the question reduces to the calculation of the groups $H^i(F,\Sym^n[\gf/\bfrak^2])$.

    As in section 4 of \cite{VilonenXue}, one can consider the Koszul complex of $0\to \bfrak^2 \to \gf^2\to \gf/\bfrak^2\to 0$ and twist by $\lambda$, giving us a resolution
\[...\to \Lambda^i[\bfrak^2]\otimes \Sym^{n-i}[\gf^2](\lambda) \to ...\to\Sym^n[\gf^2](\lambda)\to \Sym^n[\gf/\bfrak^2](\lambda)\to 0\]

 We therefore have a spectral sequence
 \[E_1^{-i,i+k} = H^{i+k}(\Lambda^i[\bfrak^2](\lambda))\otimes \Sym^{n-i}[\gf^2]\Longrightarrow H^k(\Sym^n[\gf/\bfrak^2](\lambda)).\]
 
By Calculation~\ref{calc3} in the case of $\lambda=0$, and Calculations~\ref{calc:twists} and~\ref{calc:alpha-twists} in the other cases, we see that $H^{k+i}(\Lambda^i[\bfrak^2](\lambda))=0$ for any $k>0$ and any $i \ge 0$, and therefore $H^k(\Sym[(\gf/\bfrak)^2](\lambda))=0$ for all $k>0$.
\end{proof}

\begin{remark}
    Lemma~\ref{lem:Zcohom} appears as Corollary~4.3 in~\cite{Ngo18} (and, for $\lambda = 0$ in characteristic 0, in~\cite{VilonenXue}, with a claim (Remark~6.2) that the result holds for $l\ge 5$). We have given an alternative proof, as there appears to be a small issue in~\cite{Ngo18} (and we also wish to obtain more refined information, such as about the $H^0$). The proof of Theorem~4.2 in~\cite{Ngo18} requires the case $s = 0$ (in the notation of that paper) as a inductive hypothesis, whereas it is deduced afterwards as a corollary. To repair that issue, we must show that $H^i(\Sym^n(\nf^{*,r})(\lambda))$ for all $i \ge 1$ and $\lambda \in X(T)_+$. Now, in the notation of that paper, $\beta + \lambda \in \mathcal{A}_\alpha$. Then we have the exact sequence 
    \[0 \to \Sym^{n-1}[\nf^{*,r}](\beta + \lambda) \to \Sym^n(\nf^{*,r})(\lambda) \to \Sym^{n}[\nf^{*, r-1} \oplus \nf_\beta^*](\lambda) \to 0\]
    and we can apply an appropriate inductive hypothesis to deduce the vanishing of the $i$th cohomology of the middle term from the vanishing of the $i$th cohomology of the outer terms.

    Along with brevity, the advantage of this approach is that it applies without restriction on the characteristic, as it relies on the ``Easy Lemma'' of~\cite{demazureVerySimpleProof1976} and a vanishing result of \cite{kumarFrobeniusSplittingCotangent1999} proved using Frobenius splitting.
\end{remark}

\begin{lemma}\label{lem:omegaZIY}
Suppose that $l \ge 5$. Then we have
\[H^i(Z, \omega_Z) = 0\]
for all $i > 0$ and, for $\lambda \in X(T)_+$,  
\[H^i(Z, \Ic_Y(\lambda)) = 0\]
for all $i > 0$.
\end{lemma}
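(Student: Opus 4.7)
The statement follows almost immediately by combining Proposition~\ref{prop:sheafcalcs} with Lemma~\ref{lem:Zcohom}. The plan is as follows.

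First, I would invoke Proposition~\ref{prop:sheafcalcs} to rewrite the two cohomology groups as
\[H^i(Z, \omega_Z) = H^i(Z, \Oc_Z(2\rho)) \quad \text{and} \quad H^i(Z, \Ic_Y(\lambda)) = H^i(Z, \Oc_Z(\rho + \lambda)).\]
Thus the lemma reduces to showing $H^i(Z, \Oc_Z(\mu)) = 0$ for $i > 0$ when $\mu = 2\rho$ or $\mu = \rho + \lambda$ with $\lambda \in X(T)_+$.

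Next, I would observe that both $2\rho$ and $\rho + \lambda$ lie in $X(T)_+$. Indeed, by definition of $\rho$ as half the sum of positive roots, $\pres{\rho, \kappa^\vee} = 1$ for every simple root $\kappa$, so $\rho \in X(T)_+$; then $2\rho$ and $\rho + \lambda$ are dominant since $X(T)_+$ is a submonoid of $X(T)$.

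Finally, with these weights identified as dominant, both desired vanishing statements are a direct application of Lemma~\ref{lem:Zcohom}, which establishes $H^i(Z, \Oc_Z(\mu)) = 0$ for $i > 0$ and every dominant weight $\mu$, under the assumption $l \ge 5$. There is no obstacle: the substantive work has already been carried out in Lemma~\ref{lem:Zcohom} via the Koszul spectral sequence degeneration coming from Calculations~\ref{calc3}, \ref{calc:twists}, and~\ref{calc:alpha-twists}.
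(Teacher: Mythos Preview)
Your proposal is correct and matches the paper's own proof essentially verbatim: the paper simply invokes Proposition~\ref{prop:sheafcalcs} to identify $\omega_Z\cong\Oc_Z(2\rho)$ and $\Ic_Y\cong\Oc_Z(\rho)$, then applies Lemma~\ref{lem:Zcohom}. Your added remark that $2\rho$ and $\rho+\lambda$ are dominant is a helpful clarification the paper leaves implicit.
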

\begin{proof}
From Proposition \ref{prop:sheafcalcs} that $\Ic_Y\cong\Oc_Z(\rho)$ and $\omega_Z\cong \Oc_Z(2\rho)$. The lemma now follows from Lemma~\ref{lem:Zcohom}.
\end{proof}

\begin{corollary}{\label{propYcohom}}
    Let $l \ge 5$ and $\lambda \in X(T)_+$. Then 
    \[H^i(Y, \Oc_Y(\lambda)) = 0\]
    for all $i > 0$. Furthermore, $H^i(Y, \omega_Y) = 0$ for all $i > 0$.
\end{corollary}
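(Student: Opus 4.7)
The plan is to deduce both vanishing statements from the corresponding vanishing on $Z$, using the closed embedding $i : Y \hookrightarrow Z$ and the short exact sequence of coherent sheaves on $Z$
\[0 \to \Ic_Y \to \Oc_Z \to i_*\Oc_Y \to 0.\]
Twisting by $\pi_Z^*\Oc(\lambda)$ is exact (being tensoring with a line bundle), so for each $\lambda \in X(T)_+$ we obtain
\[0 \to \Ic_Y(\lambda) \to \Oc_Z(\lambda) \to i_*\Oc_Y(\lambda) \to 0\]
and hence a long exact sequence in cohomology on $Z$. Since $i$ is a closed embedding we have the identification $H^i(Z, i_* \Oc_Y(\lambda)) = H^i(Y, \Oc_Y(\lambda))$.

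First I would handle $\Oc_Y(\lambda)$ for $\lambda \in X(T)_+$. From the long exact sequence one reads, for $i \ge 1$, the fragment
\[H^i(Z, \Oc_Z(\lambda)) \longrightarrow H^i(Y, \Oc_Y(\lambda)) \longrightarrow H^{i+1}(Z, \Ic_Y(\lambda)).\]
The left term vanishes by Lemma~\ref{lem:Zcohom}, and the right term vanishes by Lemma~\ref{lem:omegaZIY} (both for $\lambda \in X(T)_+$ and $l \ge 5$). Hence $H^i(Y, \Oc_Y(\lambda)) = 0$ for all $i > 0$, which is the first claim.

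For the second statement, recall from Proposition~\ref{prop:sheafcalcs}(4) that $\omega_Y \cong \Oc_Y(\rho)$. Since $\rho = L_1 - L_3 = \alpha + \beta$ is a dominant weight (indeed $\langle \rho, \alpha^\vee\rangle = \langle \rho, \beta^\vee\rangle = 1$), the first part applied to $\lambda = \rho$ yields $H^i(Y, \omega_Y) = H^i(Y, \Oc_Y(\rho)) = 0$ for all $i > 0$. No step is really the ``main obstacle'' here; the work has already been done in Lemmas~\ref{lem:Zcohom} and~\ref{lem:omegaZIY}, and this corollary simply packages those results using the defining ideal sequence of $Y \subset Z$.
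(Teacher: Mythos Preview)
Your proof is correct and follows essentially the same approach as the paper: both use the long exact sequence associated to $0 \to \Ic_Y(\lambda) \to \Oc_Z(\lambda) \to \Oc_Y(\lambda) \to 0$ together with Lemmas~\ref{lem:Zcohom} and~\ref{lem:omegaZIY}, and then invoke $\omega_Y \cong \Oc_Y(\rho)$ from Proposition~\ref{prop:sheafcalcs} for the second claim. Your version is, if anything, slightly more explicit about why $\rho \in X(T)_+$.
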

\begin{proof}
The first claim follows from Lemmas~\ref{lem:Zcohom} and~\ref{lem:omegaZIY} and the long exact sequence in cohomology arising from
\[0\to \Ic_Y(\lambda)\to\Oc_Z(\lambda)\to \Oc_Y(\lambda)\to 0.\]
The second claim follows from the isomorphism $\omega_Y \cong \Oc_Y(\rho)$ proved in Proposition~\ref{prop:sheafcalcs}.
\end{proof}

\section{Equations for $\Xc_{\St}$}
\label{sec:equations}

In this section, we seek explicit equations for $\Xc_{\St}$ (or equivalently, for $\Xc$). For the generic fibre, $\Xc \times_\Oc E$, this was done by the first author.

\begin{proposition}\label{prop:eqns-generic} For $n \ge 1$, $\Xc_E \subset GL_{n,E} \times GL_{n,E}$ is the closed subscheme of pairs $(\Sigma, \Phi)$ such that 
\begin{align*}\Phi \Sigma \Phi^{-1} &= \Sigma^q \\
\chi_\Phi(x) &= (x - 1)(x - q)\ldots(x - q^{n-1}) \\
\chi_{\Sigma}(x) &= (x - 1)^n.
\end{align*}
\end{proposition}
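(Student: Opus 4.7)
The plan is to pass to the nilpotent picture via the logarithm and exhibit the resulting scheme as a vector bundle over a conjugacy class. Let $Z_E$ denote the closed subscheme of $GL_{n,E}\times GL_{n,E}$ cut out by the three displayed equations; from the description of $\Xc$ in the introduction we have $\Xc_E \subset Z_E$ as closed subschemes, so it suffices to prove equality.

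First I would transform to a nilpotent picture. Since $\chi_\Sigma(x) = (x-1)^n$ on $Z_E$ and $E$ has characteristic zero, the logarithm $N := \log \Sigma$ is a well-defined polynomial expression in $\Sigma - I$, under which the Steinberg relation becomes $\Phi N \Phi^{-1} = qN$. Let $W \subset GL_{n,E}\times \gf_E$ be the subscheme cut out by this relation together with $\chi_\Phi(x) = \prod_{i=0}^{n-1}(x-q^i)$; then $(\Phi,\Sigma)\mapsto (\Phi, \log\Sigma)$ sends $Z_E$ isomorphically onto the locus in $W$ on which $N$ is nilpotent. Nilpotence of $N$ is in fact automatic on $W$: iterating gives $\Phi N^k \Phi^{-1} = q^k N^k$, hence $\tr(N^k) = q^k \tr(N^k) = 0$ for all $k \ge 1$ (as $q\ge 2$ has infinite order in $\QQ \subset E$), and Newton's identities then force $\chi_N(x) = x^n$. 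Thus $Z_E \cong W$.

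Next I would study $W$ via the forgetful projection $p : W \to C_\Phi$, where $C_\Phi \subset GL_n$ is the locus of matrices with characteristic polynomial $\prod_{i=0}^{n-1}(x-q^i)$. Since the roots $1, q, \ldots, q^{n-1}$ are distinct in $E$, every element of $C_\Phi$ is conjugate to $\Phi_0 := \diag(1, q,\ldots, q^{n-1})$, so $C_\Phi \cong GL_n/T$ is smooth and irreducible of dimension $n^2 - n$. The $GL_n$-action by simultaneous conjugation lifts to $W$ and makes $p$ equivariant; the fibre over $\Phi_0$ is the $q$-eigenspace of $\mathrm{Ad}(\Phi_0)$ on $\gf$, which an explicit calculation identifies with an $(n-1)$-dimensional subspace $F$ of subdiagonal matrices. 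Equivariance then realises $W$ as the associated bundle $GL_n \times^T F$, a rank-$(n-1)$ vector bundle over $C_\Phi$; in particular $W$ is smooth and irreducible of dimension $n^2 - 1$.

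To conclude, $\Xc_E$ is itself irreducible (as the closure of the irreducible regular-unipotent locus) and of dimension $n^2 - 1$ by the standard count: the regular-unipotent conjugacy class is $(n^2-n)$-dimensional, the $\Phi$ satisfying the Steinberg relation for a fixed such $\Sigma$ form an $n$-dimensional coset of the centralizer, and fixing the characteristic polynomial cuts out a codimension-$1$ subscheme of $\Xc_{\St, E}$. Therefore $\Xc_E \hookrightarrow Z_E \cong W$ is a closed embedding of irreducible schemes of the same dimension, so it is bijective on points; since $W$ is smooth and hence reduced, this forces $\Xc_E = Z_E$ scheme-theoretically. The one step requiring real care is identifying $W$ with $GL_n \times^T F$, which rests on the transitivity of the conjugation action on $C_\Phi$ and the explicit calculation of the fibre at $\Phi_0$.
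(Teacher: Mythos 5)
Your proof is correct and takes essentially the same route the paper attributes to \cite{funck2023geometry}: pass to the nilpotent picture via the logarithm and realise the scheme cut out by the three equations as a vector bundle over the smooth irreducible conjugacy class of $\diag(1,q,\ldots,q^{n-1})$, concluding by a dimension and irreducibility comparison. You fill in details the paper delegates to the reference, in particular the trace argument showing that nilpotence of $\log\Sigma$ is already forced by the other two equations, which is a clean self-contained substitute for the cited Proposition~2.5 of \cite{funck2023geometry}.
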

\begin{proof}
After a change of variables from the unipotent cone to the nilpotent cone via the logarithm map, this is a special case of Corollary~3.3 in \cite{funck2023geometry}. (Note that in the nilpotent version, the equations making $\log(\Sigma)$ nilpotent are contained in the ideal generated by the coefficients of $\Phi \log(\Sigma)\Phi^{-1}-q\log(\Sigma)$ --- see Proposition~2.5 of \cite{funck2023geometry}). The main idea is that, if $\Cc\subseteq \GL_{n,E}$ is the conjugacy class  of $\diag(1, q, \ldots, q^{n-1})$, which is a smooth variety defined by the second equation above, then $\Xc_E \to \Cc$ can be shown to be a vector bundle with fibre over $\Phi\in\Cc$ given by 
\[\{\Sigma \text{ unipotent} : \Phi \log(\Sigma) \Phi^{-1} = q\log(\Sigma)\}.\qedhere\]
\end{proof}

However, even for $n = 2$, the subscheme of $GL_{n, \Oc} \times GL_{n, \Oc}$ cut out by these equations is not $\Oc$-flat, and so does not coincide with $\Xc$. To find the correct set of equations, we use the method of \cite{snowdenSingularitiesOrdinaryDeformation2018}.
We begin with a slight generalisation of Proposition~2.1.5 of~\cite{snowdenSingularitiesOrdinaryDeformation2018}.

\begin{proposition}\label{prop:snowden2.1.5}
Let $F$ be a scheme of finite type over $\F$ and 
\[0\rightarrow \xi\rightarrow \epsilon \rightarrow \nu \rightarrow 0\] be a short exact sequence of vector bundles on
$F$, with $\epsilon = V \otimes_{\F} \Oc_F$ for some $\F$-vector space $V$. Let $S=\Sym(V^*)$, let $Z$ be the total space
of $\nu^*$ with $\pi : Z \to F$ the natural map, and let \[R = H^0(Z, \Oc_Z) = H^0(F,\Sym(\nu)).\] There is a natural
map of graded rings $S \to R$.

Let $\mathcal{L}$ be a line bundle on $F$ and let
\[M=H^0(Z,\pi^*\mathcal{L})=H^0(F,\Sym(\nu)\otimes \Lc),\] a graded $R$-module. Suppose that $H^i(F,\Sym(\nu)\otimes
\Lc)=0$ whenever $i>0$. Then 
    \[\Tor^S_n(M,\F)=\bigoplus_{i \ge n}H^{i-n}(F,\Lambda^i(\xi) \otimes \Lc)[i]\]
as graded vector spaces, where $W[i]$ is the vector space $W$ living in degree $i$.
\end{proposition}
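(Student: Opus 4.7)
The plan is to follow Snowden's original proof, adapted to include the twist by $\Lc$. The main tool is the Koszul resolution of $\Sym(\nu)$ arising from the short exact sequence $0 \to \xi \to \epsilon \to \nu \to 0$: since $\xi \hookrightarrow \epsilon = V \otimes_\F \Oc_F$ is a regular inclusion of locally free sheaves, the Koszul complex
\[K_\bullet \colon \cdots \to \Sym(\epsilon) \otimes \Lambda^i \xi \to \cdots \to \Sym(\epsilon) \otimes \xi \to \Sym(\epsilon) \to 0\]
is exact when augmented by the surjection $\Sym(\epsilon) \twoheadrightarrow \Sym(\nu)$, yielding a graded resolution with $\Lambda^i \xi$ contributing internal degree $i$. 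Tensoring with $\Lc$ preserves exactness, giving a resolution $K_\bullet \otimes \Lc$ of $\Sym(\nu) \otimes \Lc$ by locally free sheaves.

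Next I would form the double complex obtained by applying a \v{C}ech resolution to each term of $K_\bullet \otimes \Lc$ with respect to an affine cover of $F$. Under the identification $\Sym(\epsilon) = S \otimes_\F \Oc_F$ (compatible with the natural map $S \to R$), each term is a free module over $S$, so the total complex $T^\bullet$ is a complex of free $S$-modules. One spectral sequence on this double complex takes Koszul cohomology first, producing the \v{C}ech complex of $\Sym(\nu) \otimes \Lc$, which by the vanishing hypothesis $H^j(F, \Sym(\nu) \otimes \Lc) = 0$ for $j > 0$ collapses to $M$ in total degree $0$. Hence $T^\bullet$ is an $S$-free resolution of $M$, and $\Tor_n^S(M, \F)$ is computed as the degree $-n$ cohomology of $T^\bullet \otimes_S \F$.

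The key observation is that the Koszul differential $K_i \to K_{i-1}$ multiplies by sections of $\epsilon$, i.e., by elements of $V \subset S_+$, so it vanishes after base change to $\F$. Consequently $T^\bullet \otimes_S \F$ becomes the direct sum of the \v{C}ech complexes of $\Lambda^i \xi \otimes \Lc$ placed in Koszul degree $-i$, with only the \v{C}ech differentials surviving. Reading off cohomology in total degree $-n$ then gives
\[\Tor_n^S(M, \F) = \bigoplus_{i \ge n} H^{i-n}(F, \Lambda^i \xi \otimes \Lc)[i],\]
with the internal grading $[i]$ inherited from $\Lambda^i \xi$ matching the stated decomposition.

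The main obstacle is the careful bookkeeping of the three gradings (internal weight, Koszul/homological, and \v{C}ech) together with the sign conventions for the double complex, and aligning the convention for $S$ with the natural map $S \to R$ of the statement so that identifying $\Sym(\epsilon)$ with $S \otimes \Oc_F$ is genuinely compatible with the $S$-module structure on $R$ and $M$.
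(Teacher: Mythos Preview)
Your proof is correct and follows essentially the same strategy as the paper. The paper packages the argument in derived-functor language---using the base-change isomorphism $Li^*Rp_* \cong Rp'_*L(i')^*$ for the Cartesian square with zero sections $i : \Spec\F \hookrightarrow V$ and $i' : F \hookrightarrow F \times V$, then computing $L(i')^*\pi^*\Lc$ via the Koszul resolution on $F \times V$---whereas you unwind this into an explicit \v{C}ech/Koszul double complex and a spectral sequence; but the underlying mechanism (Koszul resolution, vanishing hypothesis to obtain a free $S$-resolution of $M$, vanishing of Koszul differentials modulo $S_+$) is identical.
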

\begin{proof}
    Recall from the proof of Proposition 2.1.5 of \cite{snowdenSingularitiesOrdinaryDeformation2018} that we have a
    diagram 
\[\begin{tikzcd}
	F && {F\times V} \\
	\\
	\Spec \F && V
	\arrow["{i'}", from=1-1, to=1-3]
	\arrow["{p'}"', from=1-1, to=3-1]
	\arrow["p", from=1-3, to=3-3]
	\arrow["i"', from=3-1, to=3-3]
\end{tikzcd}\] with the horizontal maps being the zero sections. Since $Z \subset F \times V$ is a closed subscheme, we
may also regard quasicoherent $\Oc_Z$-modules as quasicoherent $\Oc_{F \times V}$-modules supported on $Z$. In
particular, this applies to $\pi^*\Lc$, and if $q : F\times V \to F$ is the projection, then $\pi^*\Lc = \Oc_Z \otimes
q^*\Lc$.

There are isomorphisms
\[\Tor_{\bullet}^S(p_*\pi^*\Lc,\F)=Li^*Rp_*\pi^*\Lc=Rp'_*L(i')^*\pi^*\Lc.\] The first isomorphism results from the
hypothesis that \[R^{i}p_*\pi^*\Lc=H^i(F,\Sym(\nu)\otimes\Lc)=0\] for all $i>0$. The second is the base-change isomorphism of
\cite[\href{https://stacks.math.columbia.edu/tag/08IB}{Lemma 08IB}]{stacks-project}. 

As in the proof of Proposition 2.1.5 of \cite{snowdenSingularitiesOrdinaryDeformation2018},  the Koszul complex gives a
quasi-isomorphism of complexes of coherent sheaves on $F \times V$, graded as $\Oc_V$-modules,
\[\left[\Lambda^{i}(q^*\xi)(-i)\otimes q^*\Lc\right]\rightarrow \Oc_Z \otimes q^*\Lc = \pi^*\Lc.\] Here $(-i)$ is the shift operator on graded $\Oc_V$-modules.  The
differentials in the left-hand complex vanish on applying $(i')^*$, while the terms are flat $\Oc_{F \times V}$-modules,
and so we get a quasi-isomorphism
\[\left[\Lambda^{i}(\eta)(-i)\otimes \Lc\right]\rightarrow L(i')^*\pi^*\Lc\]
in which the differentials in the left-hand complex are zero.
Applying $Rp'_*$ we therefore find that
\[\Tor^S_n(p_*\pi^*\Lc,\F) \cong \bigoplus_{i \ge n} H^{i-n}(F, \Lambda^{i}(\eta)\otimes \Lc)[i]\]
as required.
\end{proof}

We will always apply the lemma with $V = (\gf^*)^2 = \gf^2$ and $\xi = (\nf^\perp)^2 =  \bfrak^2$, so that $\nu =
(\gf/\bfrak)^2 = (\nf^*)^2$ and $Z$ is the total space of the vector bundle $\nf^2$. Let $R = H^0(Z, \Oc_Z)$. We have
shown already that, if $l> n$, then $S \to R$ is surjective, and so $R$ is the coordinate ring of the
image $p(Z)$ of $Z$ in $\gf^2$ (a variety over $\FF$). Let $I = \ker(S \to R)$, which is also the ideal defining $p(Z)$.
Our strategy will be to write down an ideal $\tilde{I} \subset I$ and deduce that $\tilde{I} = I$ by computing 
\[I/S_+I = \Tor_1^S(R,\FF)\] using Proposition~\ref{prop:snowden2.1.5}. In the case $n = 3$, having determined $p(Z)$,
we will then determine equations for $X = p(Y)$.

Note that $S = \Sym((\gf^*)^2)$ is the coordinate ring of the space of pairs of elements of $\gf = \spl_3$, and we write
$(M, N)$ for the universal pair of matrices over $S$.

\subsection{The case $n=2$.}
In this case, $Z = Y$ is the variety whose $\F$-points are pairs of commuting elements of $\mathfrak{sl}_2$. By
Proposition~\ref{prop:snowden2.1.5} we obtain
\[I/S_+I = \Tor^S_1(R,\F)\cong H^0(\bfrak^2)[1]\oplus H^1(\Lambda^2\left(\bfrak^2\right))[2].\] Since $\bfrak \cong
 \Oc(-1)^2$, we see that 
 \[I/S_+ I\cong H^1(\Oc(-2)^6) \cong \FF^6\] is $6$-dimensional. 
 
Let $\tilde{I}$ be the ideal of $S$ generated by the entries of $MN - NM$, by $\tr(MN)$, and by $\det(M)$ and $\det(N)$.
Then $\tilde{I} \subset I$, and $\tilde{I}/S_+\tilde{I}$ is six-dimensional (as may easily be checked, noting that
$\tr(MN-NM) = 0$), sitting in degree $2$. It follows that $\tilde{I} = I$ and we have proved
\begin{theorem}\label{thm:gl2-eqns}
Let $n = 2$. Then $X \subset \gf^2$ is the closed subscheme cut out by the equations $\det(M) = \det(N) = 0$, the
entries of $MN - NM$, and $\tr(MN)$.
\end{theorem}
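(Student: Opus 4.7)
The plan is to apply Proposition~\ref{prop:snowden2.1.5} to compute $I/S_+I = \Tor_1^S(R, \F)$ explicitly, and then to exhibit an ideal $\tilde{I} \subset I$ whose degree-$2$ piece has the same dimension as $I_2$. Since $I/S_+I$ will turn out to be concentrated in degree $2$, the ideal $I$ is generated in degree $2$, so the dimension match will force $\tilde{I} = I$.

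To set this up, note that for $n = 2$ the commutativity condition in $C(\nf)$ is vacuous (as $\nf$ is one-dimensional), so $Y = Z$ and $R = H^0(Z, \Oc_Z)$ is genuinely the coordinate ring of the image of $Z$ in $\gf^2$. The vanishing hypothesis of Proposition~\ref{prop:snowden2.1.5}, that $H^i(F, \Sym[(\gf/\bfrak)^2]) = 0$ for $i > 0$, is exactly what was established in the $n = 2$ case of Theorem~\ref{thm:resolution}. Applying the proposition with $V = \gf^2$, $\xi = \bfrak^2$ and $\Lc = \Oc_F$ yields
\[I/S_+ I \cong H^0(\bfrak^2)[1] \oplus H^1(\Lambda^2[\bfrak^2])[2].\]
For $\SL_2$ on $\Fc = \PP^1$, the $B$-representation $\bfrak$ is a nonsplit extension $0 \to \nf \to \bfrak \to \tf \to 0$ (since $U$ does not preserve a lift of $\tf$), so Grothendieck's splitting theorem forces $\bfrak \cong \Oc(-1) \oplus \Oc(-1)$ as a vector bundle on $\PP^1$. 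Therefore $H^0(\bfrak^2) = 0$ and $\Lambda^2[\bfrak^2] \cong \Oc(-2)^6$, so $I/S_+I \cong \F^6$ concentrated in degree $2$.

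I would then take $\tilde{I}$ to be the ideal generated by $\det(M)$, $\det(N)$, the four entries of $MN - NM$, and $\tr(MN)$. Each is a degree-$2$ polynomial lying in $I$: the determinants because $M, N$ are nilpotent on $X$; the commutator entries because $[M,N] = 0$ on $Y$; and $\tr(MN)$ because on the dense open locus where $N$ is regular nilpotent in $\spl_2$, $M$ must be a scalar multiple of $N$ and $N^2 = 0$, so $MN = 0$. Since $\tr(MN - NM) = 0$ identically, $\tilde{I}_2$ is spanned by at most six elements, and a direct monomial-level inspection confirms these six polynomials are $\F$-linearly independent in $\gf^2 = \spl_2 \oplus \spl_2$. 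Thus $\dim_\F \tilde{I}_2 = 6 = \dim_\F I_2$, and since $I$ is generated in degree $2$, we conclude $\tilde{I} = I$.

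The main technical content is all upstream, in Proposition~\ref{prop:snowden2.1.5} and the identification of $\bfrak$ with $\Oc(-1)^2$ on $\PP^1$. The only real subtlety is the bookkeeping in degree $2$: the four entries of $MN - NM$ are constrained by $\tr(MN - NM) = 0$ to span only a three-dimensional space, which is precisely why $\tr(MN)$ must appear as a separate generator in order to account for all six degrees of freedom in $I_2$.
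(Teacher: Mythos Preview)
Your proof is correct and follows essentially the same approach as the paper: compute $\Tor_1^S(R,\F)$ via Proposition~\ref{prop:snowden2.1.5}, identify $\bfrak \cong \Oc(-1)^2$ so that $I/S_+I \cong \F^6$ in degree~$2$, then exhibit six linearly independent degree-$2$ elements of $I$ and conclude by Nakayama. The only minor point is your justification of $\bfrak \cong \Oc(-1)^2$: the extension $0 \to \nf \to \bfrak \to \tf \to 0$ being nonsplit as a $B$-representation does not \emph{a priori} force it to be nonsplit as a (non-equivariant) vector bundle; one quick way to finish is to note that $H^0(\bfrak)$ would be a one-dimensional $G$-subrepresentation of $H^0(\gf)=\gf$, which is impossible as $\spl_2$ is irreducible for $l>2$, hence $H^0(\bfrak)=0$ and Grothendieck splitting gives $\Oc(-1)^2$.
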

We can apply this to $\Xc$, recalling that we have shown that $\Xc \otimes_\Oc \F = X$.
\begin{corollary}\label{cor:Xc-eqns-sl2} Let $n = 2$. Then $\Xc \subset GL_2 \times GL_2$ is the closed subscheme of pairs $(\Sigma, \Phi)$ such that
    \begin{align*} \Phi \Sigma \Phi^{-1} &= \Sigma^q \\
        \chi_\Phi(x) &= (x-1)(x-q) \\
        \chi_\Sigma(x) &= (x-1)^2 \\
        \tr(\Phi(\Sigma - I)) &= 0.
    \end{align*}
Moreover, $\Xc_{\St}$ is defined by the same equations but with the second equation replaced by $q\tr(\Phi)^2 = 4\det(\Phi)$.
\end{corollary}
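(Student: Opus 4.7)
The plan is to define $\Xc'$ as the closed subscheme of $GL_2 \times GL_2$ cut out by the four listed equations, verify $\Xc \subset \Xc'$, check that $\Xc$ and $\Xc'$ have the same generic and special fibres, and conclude $\Xc = \Xc'$ by a flatness-plus-Nakayama argument. For the inclusion $\Xc \subset \Xc'$, equations 1--3 are immediate from the definition. For equation 4, on $\Xc$ the unipotent matrix $\Sigma$ satisfies $(\Sigma - I)^2 = 0$ by Cayley--Hamilton, so $N := \Sigma - I$ equals $\log(\Sigma)$, and the defining relation $\Phi \Sigma \Phi^{-1} = \Sigma^q$ unwinds to $\Phi N = q N \Phi$. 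Taking traces yields $(1 - q)\tr(\Phi N) = 0$, and since $\Xc$ is $\Oc$-flat (as shown in the proof of Theorem~\ref{thm:main}) while $1 - q$ is a nonzero element of $\Oc$, this forces $\tr(\Phi(\Sigma - I)) = 0$.

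Next I would identify the generic and special fibres of $\Xc'$. Over $E$, where $1 - q$ is a unit, the previous calculation shows that equation 4 is automatic given the other three, so Proposition~\ref{prop:eqns-generic} gives $\Xc'_E = \Xc_E$. Over $\F$, where $q \equiv 1 \pmod{l}$, the change of variables $(\Phi, \Sigma) = (I + M, I + N)$ identifies $\Xc'_\F$ scheme-theoretically with the subscheme of $\spl_2^2$ cut out by $\det(M) = \det(N) = 0$, $[M, N] = 0$, and $\tr(MN) = 0$: the tracelessness of $M$ and $N$ comes from the characteristic polynomial conditions, the vanishing of the determinants from Cayley--Hamilton, commutativity from equation~1, and the trace condition from equation~4 upon expansion. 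By Theorem~\ref{thm:gl2-eqns}, this subscheme is precisely $X$, and since $\Xc_\F$ is reduced and equal to $X$ (by Theorem~\ref{thm:main}), the closed immersion $\Xc_\F \hookrightarrow \Xc'_\F$ is an isomorphism.

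Finally, let $K$ be the ideal of $\Xc$ in $\Oc_{\Xc'}$, finitely generated since $\Xc'$ is Noetherian. We have $K[1/\varpi] = 0$ (from the generic fibres) and $K \otimes_\Oc \F = 0$ (from the special fibres). For a prime $\mathfrak{p}$ of $\Oc_{\Xc'}$ with $\varpi \notin \mathfrak{p}$, $K_\mathfrak{p}$ vanishes after inverting $\varpi$; for $\varpi \in \mathfrak{p}$, the relation $K_\mathfrak{p} = \varpi K_\mathfrak{p} \subset \mathfrak{m}_\mathfrak{p} K_\mathfrak{p}$ combined with Nakayama's lemma in the local ring $(\Oc_{\Xc'})_\mathfrak{p}$ gives $K_\mathfrak{p} = 0$. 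Hence $K = 0$ and $\Xc = \Xc'$. The Steinberg case is analogous: using the isomorphism $\Xc_\St \cong \GG_m \times \Xc$ (scaling $\Phi$), one observes that the other three equations are invariant under scaling $\Phi$, while the replacement equation is the scaling-invariant form of the condition that the eigenvalues of $\Phi$ be in the ratio $1:q$. The main obstacle is ensuring the identification $\Xc'_\F = X$ scheme-theoretically rather than merely set-theoretically, but this is built into the content of Theorem~\ref{thm:gl2-eqns}.
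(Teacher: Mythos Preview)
Your proof is correct and follows essentially the same approach as the paper's: reduce to checking the equality $\Xc = \Xc'$ on generic and special fibres, using Proposition~\ref{prop:eqns-generic} over $E$ and Theorem~\ref{thm:gl2-eqns} over $\F$. Two points where you are more explicit than the paper: you spell out the Nakayama argument that turns equality on both fibres into equality over $\Oc$ (the paper compresses this to ``we need only check this after $\otimes_\Oc\F$ and $\otimes_\Oc E$''), and you verify equation~4 on $\Xc$ via the trace identity $(1-q)\tr(\Phi N)=0$ plus $\Oc$-flatness, whereas the paper checks it at $\bar{E}$-points where $\Phi\sim\diag(q,1)$. Both verifications are equivalent, and your added detail is welcome. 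One small remark: when you assert $K\otimes_\Oc\F=0$, you are implicitly using that $\Oc_\Xc$ is $\Oc$-flat (so that $\Tor_1^\Oc(\Oc_\Xc,\F)=0$ and the sequence $0\to K\otimes\F\to\Oc_{\Xc'_\F}\to\Oc_{\Xc_\F}\to 0$ is exact); this is true since $\Xc$ is defined as a Zariski closure inside the generic fibre, but it would not hurt to say so.
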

\begin{proof}
    The statement for $\Xc_{\St}$ follows from that for $\Xc$ and the discussion following Assumption~\ref{ass:nlq}. 

    To show that $\Xc$ is cut out by the given equations in $GL_2 \times GL_2$, we need only check this after
    $\otimes_{\Oc} \FF$ and $\otimes_\Oc E$. After $\otimes \FF$, and writing $\Phi = I + M$, $\Sigma = I + N$, we
    obtain exactly the equations generating the ideal $\tilde{I}$ above; thus we are done by Theorem~\ref{thm:gl2-eqns}.

    By Proposition~\ref{prop:eqns-generic}, the first three
    equations alone already define $\Xc\otimes_\Oc E$, and that this is a smooth variety. Since every point of
    $\Xc(\bar{E})$ has $\Phi \sim \diag(q,1)$, and the fourth equation is easily seen to hold at such points, we have
    that this equation is also contained in the defining ideal of $\Xc \otimes_\Oc E$.  
\end{proof}

\subsection{The case $n = 3$.}

Recall that we have $Y \subset Z$ where $Z$ is the total space of the vector bundle $\nf^2$ on $F$ and $Y$ is the
commuting subvariety, and we have $X = \Spec \Gamma(Y, \Oc_Y)$. If $R = \Gamma(Z, \Oc_Z)$ then by the discussion above we have a surjective morphism $S \to R$ with kernel $I$. 

\begin{theorem}\label{thm:Z-ideal-sl3} Suppose that $n = 3$ and that $l \ge 5$. The homogeneous ideal
    $I\trianglelefteq S$ is generated by:
\begin{itemize}
    \item $\tr(M^2) , \tr(N^2) , \tr(MN)$ (in degree 2);
    \item $\tr(M^3), \tr(N^3)$, and all the entries of
    \[M^2N, N^2M, NM^2, MN^2\]
    (in degree 3).
\end{itemize}
\end{theorem}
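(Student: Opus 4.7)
My plan is to form the ideal $\tilde I \subset S$ generated by the proposed elements, verify $\tilde I \subset I$, and deduce $\tilde I = I$ by graded Nakayama after computing $I/S_+I$ via Proposition~\ref{prop:snowden2.1.5}. The containment $\tilde I \subset I$ should be straightforward: recall that $p(Z)$ is the closure of the $G$-saturation of $\nf^2$, so it suffices to check that the proposed generators vanish on pairs $(M,N)\in \nf^2$. All positive-degree traces vanish because the argument is strictly upper triangular, and a direct calculation shows that $M^2N$, $N^2M$, $NM^2$, and $MN^2$ vanish identically on $\nf^2$ (each of $M^2, N^2$ has at most a single nonzero entry, in position $(1,3)$, which any further strictly upper triangular multiplication on either side annihilates). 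Since these conditions are $G$-invariant or $G$-equivariant, they descend to the Zariski closure $p(Z)$.

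Next I would apply Proposition~\ref{prop:snowden2.1.5} with $\Lc = \Oc_F$, whose hypothesis is supplied by Lemma~\ref{lem:Zcohom}, to obtain
\[I/S_+I = \Tor_1^S(R, \F) = \bigoplus_{i \ge 1} H^{i-1}(F, \Lambda^i \bfrak^2)[i].\]
By Calculation~\ref{calc3} together with the vanishing of $H^j(F, \cdot)$ for $j > \dim F = 3$, the only surviving pieces are in degrees 2 and 3:
\[(I/S_+I)_2 \cong \F^3, \qquad (I/S_+I)_3 \cong \gf^{\oplus 4} \oplus \F^{\oplus 4}\]
as $G$-representations.

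By graded Nakayama it suffices to check that the generators of $\tilde I$ span $I/S_+I$ degree-by-degree. Degree 2 is immediate: $\tr(M^2), \tr(N^2), \tr(MN)$ have distinct bidegrees and span $\F^3$. In degree 3, I would separate by $G$-isotypic content and by bidegree. The trivial constituents of the generators are $\tr(M^3), \tr(N^3), \tr(M^2N) = \tr(NM^2)$, and $\tr(MN^2) = \tr(N^2M)$; these four polynomials have distinct bidegrees and so are linearly independent, and they exhaust the $\F^{\oplus 4}$ component of $(I/S_+I)_3$ because $(S_+I)_3 = S_1 \cdot I_2$ has $G$-content $\gf^{\oplus 6}$ (containing no trivial summand), so the trivial part of $I_3$ injects into $(I/S_+I)_3$.

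The main obstacle is to show that the traceless parts of $M^2N$, $NM^2$, $MN^2$, $N^2M$ span the $\gf^{\oplus 4}$ part of $(I/S_+I)_3$. Splitting by bidegree, only $(2,1)$ and $(1,2)$ carry $\gf$, each with multiplicity two in $(I/S_+I)$, and by $M \leftrightarrow N$ symmetry it suffices to treat bidegree $(2,1)$. Passing to highest weight vectors of weight $L_1 - L_3$, the problem reduces to showing that the four elements
\[(M^2N)_{13}, \quad (NM^2)_{13}, \quad m_{13}\tr(MN), \quad n_{13}\tr(M^2)\]
are linearly independent in $S_{(2,1)}$, the last two being highest weight vectors spanning the $\gf$-content of $(S_+I)_{(2,1)}$. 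This is a finite monomial check modulo tracelessness: for instance, $m_{11}m_{12}n_{23}$ occurs only in $(M^2N)_{13}$, and analogous distinguishing monomials isolate each remaining term. Assembling these steps, $\tilde I = I$ follows.
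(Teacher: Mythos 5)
Your proposal is correct and follows the same overall scaffolding as the paper: verify $\tilde I\subset I$ by checking vanishing on $\nf^2$ and using $G$-saturation, compute $I/S_+I=\F^3[2]\oplus(\gf^4\oplus\F^4)[3]$ via Proposition~\ref{prop:snowden2.1.5} and Calculations~\ref{calc3},~\ref{calc:wedge4}, and close with graded Nakayama. Where you diverge is in how the degree-$3$ surjectivity is checked. The paper works bigraded-piece-by-piece, asserting by a direct computer calculation that the entries of $M^2N$ and $NM^2$ span a $17$-dimensional space modulo $(S_+I)_{(2,1)}$ (and, as a byproduct, that the entries of $MNM$ lie in $\tilde I$), then multiplies out $1+1+17+17=36$. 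You instead exploit the $G$-module structure: since $I_2$ is a sum of trivials and $\gf$ is irreducible with no trivial constituent for $l\ge 5$, the trivial isotypic piece of $I_3$ injects into $(I/S_+I)_3$, which handles the $\F^4$ summand via the four trace polynomials; and for the $\gf^4$ part you reduce to highest-weight vectors of weight $\rho$ in bidegree $(2,1)$, where a small distinguishing-monomial check replaces the $17$-dimensional computation. This is a cleaner and more conceptual route for the verification step, and it makes the role of the isotypic decomposition of $(I/S_+I)_3$ transparent. What it forgoes is the extra output of the paper's computation, namely the explicit observation that $MNM$ and $NMN$ are already in the ideal generated by the stated generators. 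Two small things worth spelling out if you write this up fully: (i) in degree $2$ you should note that the bidegree-wise symmetry plus $\tr(M^2)\ne 0$ forces the split $(1,1,1)$ of $\F^3$ over bidegrees $(2,0),(1,1),(0,2)$; and (ii) you should record that $\gf=\spl_3$ is irreducible as a $G$-module precisely because $l\nmid 3$, which the hypothesis $l\ge 5$ guarantees.
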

\begin{remark}
    In fact, the entries of $MNM$ and of $NMN$ are also in the ideal generated by these equations, which may be easily
    checked by computer. One can take any two of $M^2N, NM^2,$ and $MNM$ in the generating set.
\end{remark}

\begin{proof} By Proposition~\ref{prop:snowden2.1.5}, along with Calculation~\ref{calc3} and~\ref{calc:wedge4}, we see that:
\begin{align*} 
    I/S^+I &\cong \bigoplus_i H^{i-1}(F,\Lambda^i[\bfrak\oplus \bfrak])[i] \\
    & =\F^3[2]\oplus (\gf^4\oplus \F^{4})[3].
\end{align*}
    Hence we see that,  
        \[\dim(I/S_+I)_k = \begin{cases*}
            3 & if $k = 2$ \\
            36 & if $k = 3$ \\
            0 & otherwise. \\
        \end{cases*}\] 
        
        Let $\tilde{I}$ be the ideal of $S$ generated by the elements listed in the statement of the theorem. Since all
    the elements of $\tilde{I}$ vanish for $M, N \in \nf$, $\tilde{I}\subset I$. We show that they are equal.
    
    As $\tr(M^2), \tr(MN)$ and $\tr(N^2)$ are all linearly independent, we see that they span the degree 2 subspace of
    $\tilde{I}/S_+\tilde{I}$. By a computer calculation we see that the entries of $M^2N$ and $NM^2$ span a
    17-dimensional space modulo the subspace spanned by entries of $\tr(M)^2N$, $M\tr(MN)$, and $N \tr(M^2)$ so long as
    $l > 2$ (this is just a matter of computing the invariant factors of an explicit finitely-generated $\Z$-module).
    This same calculation also shows that the entries of $MNM$ are in $\tilde{I}$. Therefore, using that there is a
    natural bigrading on $S$ with respect to which $\tilde{I}$ is bihomogeneous, we have
    \[\dim(\tilde{I}/S^+\tilde{I})_3 = 1 + 1 + 17 + 17 = 36 = \dim(I/S^+I)_3.\] Thus $\dim(\tilde{I}/S_+\tilde{I})_k =
    \dim(I/S_+I)_k$ for all $k \ge 0$, and so $I = \tilde{I}$.
\end{proof}

\begin{theorem}\label{thm:X-ideal-sl3}
    Let $J$ be the ideal of $S$ determining the closed subscheme $X$. Then $J$ is generated by 
    \begin{itemize}
        \item $\tr(M),\tr(N)$ (in degree 1);
        \item $\tr(M^2),\tr(MN), \tr(N^2)$, and the entries of $MN-NM$ (in degree 2);
        \item and $\tr(M^3),\tr(N^3)$ and the entries of $M^2N$ and $MN^2$ (in degree 3).
    \end{itemize}
\end{theorem}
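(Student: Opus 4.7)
The plan is to mirror the proof of Theorem~\ref{thm:Z-ideal-sl3}: produce a candidate ideal $\tilde J \subset S$ generated by the listed elements, verify the containment $\tilde J \subset J$, and deduce equality by a Tor-based dimension count for minimal generators.

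First, the characteristic polynomial conditions force $\tr(\Phi - I) = \tr(\Sigma - I) = 0$ on $X$ (using $q \equiv 1 \bmod l$), so $J$ contains $(\tr M, \tr N)$, and it suffices to work modulo these in $S' := S/(\tr M, \tr N) = \F[\sla_3^2]$. Both reductions $\bar J, \bar{\tilde J} \subset S'$ contain the ideal $I$ of Theorem~\ref{thm:Z-ideal-sl3}: the only non-obvious step is that the entries of $NM^2$ and $N^2M$ lie in $\bar{\tilde J}$, which follows from
\[NM^2 - M^2N = -[M,N]M - M[M,N]\]
and the analogous identity with $M, N$ exchanged. Setting $R = S'/I$, the problem reduces to showing that the ideal of $R$ generated by the entries of $[M,N]$ equals $K := \ker(R \twoheadrightarrow A)$, where $A = \Gamma(Y, \Oc_Y)$.

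From the short exact sequence $0 \to \Ic_Y \to \Oc_Z \to \Oc_Y \to 0$ together with the vanishing $H^1(Z, \Ic_Y) = 0$ of Lemma~\ref{lem:omegaZIY}, one identifies $K = H^0(Z, \Ic_Y)$. The isomorphism $\Ic_Y \cong \Oc_Z(\rho)$ of Proposition~\ref{prop:sheafcalcs} is $\Oc_Z$-linear but shifts the polynomial grading by two, since the local generator $af - cd$ of $\Ic_Y$ lies in polynomial degree $2$; concretely,
\[H^0(Z, \Ic_Y)_d = H^0(F, \Sym^{d-2}((\gf/\bfrak)^2) \otimes \Oc_F(\rho)).\]
Applying Proposition~\ref{prop:snowden2.1.5} with $\Lc = \Oc_F(\rho)$---the requisite cohomology vanishing being Lemma~\ref{lem:omegaZIY}---produces
\[\Tor_0^{S'}(H^0(Z, \Oc_Z(\rho)), \F) = \bigoplus_{i \ge 0} H^i(F, \Lambda^i[\bfrak \oplus \bfrak](\rho))[i].\]
By Calculation~\ref{calc:twists} (using $\rho \ge \rho$ for the vanishing $H^1(\bfrak(\rho)) = 0$ in part (1)), every $i \ge 1$ term vanishes, leaving $V(\rho) \cong \sla_3$ in degree $0$. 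Accounting for the grading shift, $K$ is minimally generated as an $S'$-module by a copy of $V(\rho)$ sitting in polynomial degree $2$.

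Finally, the nine entries of $MN - NM$ are bilinear in $(M,N)$ and satisfy only the trace relation $\tr([M,N]) = 0$, so they span an $8$-dimensional sub-$G$-representation of $S'_2$ isomorphic to the adjoint $\sla_3$. Since $I_2 = \langle \tr M^2, \tr MN, \tr N^2\rangle$ consists of trivial representations, this copy of $\sla_3$ injects into $R_2$; it sits inside the $8$-dimensional subspace $K_2 \cong \sla_3$, so the two coincide. By minimal generation in degree $2$, the entries of $[M,N]$ generate $K$ over $R$, whence $\bar{\tilde J} = \bar J$ and $\tilde J = J$. The main technical point is the grading shift accompanying $\Ic_Y \cong \Oc_Z(\rho)$; once that is correctly bookkept, the cohomological input is already provided by Calculation~\ref{calc:twists}.
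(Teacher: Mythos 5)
Your proof is correct and follows the same route as the paper's: identify $K = \ker(R \to \Gamma(Y,\Oc_Y))$ with $H^0(Z, \Ic_Y)$ via the vanishing of $H^1(Z, \Ic_Y)$, use the isomorphism $\Ic_Y \cong \Oc_Z(\rho)$ with its degree-$2$ shift, apply Proposition~\ref{prop:snowden2.1.5} with $\Lc = \Oc_F(\rho)$ to see that $K \otimes_S \F$ is a copy of $V(\rho) \cong \gf$ concentrated in degree $2$, and then match this against the $8$ entries of $[M,N]$. You do spell out two points that the paper leaves implicit or handles only by reference to a remark: (i) the preliminary reduction modulo $\tr M, \tr N$ so that the Tor computation over $\F[\sla_3^2]$ applies; and (ii) the identity $NM^2 - M^2N = -[M,N]M - M[M,N]$ (and its mirror), which is precisely what lets one drop the entries of $NM^2$ and $N^2M$ from the generating set of $I$ once the entries of $[M,N]$ are available — without this the conclusion of the theorem does not quite follow from Theorem~\ref{thm:Z-ideal-sl3} alone. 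Your representation-theoretic argument that the adjoint copy of $\sla_3$ spanned by the entries of $[M,N]$ injects into $R_2$ (because $I_2$ is a sum of trivials) is a clean way to justify the linear independence that the paper asserts without comment.
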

\begin{proof}
    Let $i : Y \to Z$ be the inclusion. Then we have that $H^0(X, \Oc_X) = H^0(Z, i_*\Oc_Y)$ and a short exact sequence
    \[0 \to H^0(Z,\Ic_Y) \to H^0(Z, \Oc_Z) \to H^0(Z, i_*\Oc_Y) \to 0\]
    since, by Lemmas~\ref{lem:ZYproperties} and~\ref{lem:Zcohom}, $H^1(Z, \Ic_Y)$ vanishes.

    We claim that $I_X = H^0(Z, \Ic_Y)$ is generated, as a (homogeneous) ideal of $R = H^0(Z, \Oc_Z)$, by the entries of $MN
    - NM$. These entries give eight linearly independent elements of $I_X$ and it suffices to show that 
    \[\dim(I_X\otimes_S \F)_i = \begin{cases*}
        8 & if $i = 2$ \\
        0 & otherwise.
    \end{cases*} \]

    By Lemma~\ref{lem:ZYproperties}, $\Ic_Y \cong \Oc_Z(\rho)$, locally (on $F$) generated in degree 2; hence $I_X \cong
    H^0(Z, \Oc_Z(\rho))(-2)$ as a graded $S$-module, where $(-2)$ is the shift operator. By
    Proposition~\ref{prop:snowden2.1.5}, 
    \[(I_X \otimes_S \F)_i = \dim H^{i}(F, \Oc(\rho) \otimes \Lambda^i[\bfrak \oplus \bfrak])[i+2]\] for $i \ge 0$. The claim
    then follows from Calculation~\ref{calc:twists}.

    As $J$ is the preimage of $I_X$ under $S \to R$, Theorem~\ref{thm:X-ideal-sl3} then follows from Theorem~\ref{thm:Z-ideal-sl3}.
\end{proof}

We can also write down equations for $\Xc$ as follows:

\begin{corollary}
\label{cor:Xc-equations-sl3}
    A complete set of equations defining the closed subscheme $\Xc\subseteq \GL_3\times GL_3$ over $\Oc$ is as follows:
    \begin{align*}
        \Phi \Sigma &=\Sigma^q\Phi \\
        \chi_{\Phi}(x)&=(x-q^2)(x-q)(x-1) \\
        \chi_\Sigma(x)&=(x-1)^3\\
        \tr(\Phi (\Sigma - I))&=0\\
        (\Phi-q^2)(\Sigma - I)^2&=0\\
        (\Phi-q^2)(\Phi-q)(\Sigma - I)&=0,    
    \end{align*}
    where $\chi_A(x)$ denotes the characteristic polynomial of $A$. 
    
    Similarly for $\Xc_{\St}$, where the second equation is replaced by \[q^3(\tr \Phi(x))^3 = (1 + q + q^2)^3 \det
    \Phi(x)\] and 
    \[(1 + q + q^2)^2\tr(\Phi(x)^2) = (1 + q^2 + q^4)\tr(\Phi(x))^2.\]
\end{corollary}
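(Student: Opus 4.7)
The strategy is to prove the statement for $\Xc$ first, and then deduce the $\Xc_\St$ version via the isomorphism $\Xc_\St \cong \GG_m \times \Xc$. Let $\Xc' \subseteq GL_{3,\Oc} \times GL_{3,\Oc}$ denote the closed subscheme cut out by the six listed equations. I would first verify the six equations hold on $\Xc$, giving an inclusion $\Xc \hookrightarrow \Xc'$; by $\Oc$-flatness of $\Xc$ this may be checked on the generic fibre, where a closed point has $\Phi$ diagonalisable with eigenvalues $1, q, q^2$ and the commutation relation forces $\log\Sigma$ to have support only on the two super-diagonal entries immediately above the diagonal, after which (4)--(6) follow from a direct diagonal computation.

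It remains to show $\Xc' = \Xc$. Let $J, J' \trianglelefteq \Oc[GL_3 \times GL_3]$ be the ideals of $\Xc, \Xc'$ respectively; then $J' \subseteq J$ and the quotient $J/J'$ is a finitely generated module over the Noetherian ring $\Oc[GL_3 \times GL_3]$. By Proposition~\ref{prop:eqns-generic}, $J' \otimes_\Oc E = J \otimes_\Oc E$, so $J/J'$ is $\Oc$-torsion. If in addition $(J/J') \otimes_\Oc \F = 0$, then Nakayama applied to each localisation at a prime containing $\varpi$ forces $J/J' = 0$, completing the proof.

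The crux is therefore the special fibre calculation. Setting $M = \Phi - I$ and $N' = \Sigma - I$, and using that $q \equiv 1 \bmod l$ with $l$ odd (so $\binom{q}{k} \equiv 0 \bmod l$ for $0 < k < q$, and hence $(I+N')^q = I + N'$ once $(N')^3 = 0$ from equation~(3)), the six equations reduce modulo $\varpi$ to: $[M, N'] = 0$; $\tr(M^k) = \tr((N')^k) = 0$ for $k=1,2,3$; $\tr(MN') = 0$; and $M(N')^2 = M^2 N' = 0$. Under the polynomial change of variable $N = N' - (N')^2/2$ (valid since $(N')^3 = 0$ and $l$ is odd), these equations generate the same ideal as those of Theorem~\ref{thm:X-ideal-sl3} applied to $(M, N = \log\Sigma)$: the commutator relation lets one swap factors freely when converting, $N$-traces match $N'$-traces using the nilpotency relations, and the triple products $M^2N, MN^2$ become $M^2 N', M(N')^2$ up to terms already in the ideal. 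Hence on the special fibre the equations cut out $X = \Xc_\F^\red$, giving $(J/J')\otimes_\Oc \F = 0$ as required.

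The $\Xc_\St$ version follows from $\Xc_\St \cong \GG_m \times \Xc$, where $\GG_m$ acts by scaling $\Phi$: the characteristic polynomial condition on $\Xc$ is equivalent to the scale-invariant eigenvalue-ratio condition encoded by the two replacement identities (since the eigenvalues of $\Phi$ on $\Xc_\St$ are $\lambda, q\lambda, q^2\lambda$ for some $\lambda \in \GG_m$). The main obstacle throughout is the bookkeeping in the special fibre check: verifying carefully that under the change of variable $N \leftrightarrow N'$, and modulo the imposed commutator and nilpotency relations, the two sets of generators really produce the same ideal in $\F[GL_3 \times GL_3]/(\varpi)$.
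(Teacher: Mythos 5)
Your overall strategy — establish $\Xc \hookrightarrow \Xc'$ by checking on the generic fibre using $\Oc$-flatness of $\Xc$, then show $J/J'$ vanishes by comparing generic and special fibres and applying Nakayama, and finally read off $\Xc_{\St}$ via the $\GG_m$-splitting — is the same as the paper's. You are more explicit than the paper about the Nakayama step, which is good; you should also note that the implication $\bar{J'} = \bar{J} \Rightarrow (J/J')\otimes_{\Oc}\FF = 0$ uses the flatness of $\Oc(\Xc)$ (so that $J\cap(\varpi) = \varpi J$).

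There is one concrete error: the assertion that $\binom{q}{k}\equiv 0 \bmod l$ for $0 < k < q$ is false. For instance $\binom{q}{1} = q \equiv 1 \bmod l$, not $0$; more generally this property characterises powers of $l$, whereas $q$ is a power of $p \neq l$. What you actually need, and what is true, is that once $(N')^3 = 0$ (from equation~(3) via Cayley--Hamilton) we have $\Sigma^q = (I+N')^q = I + qN' + \binom{q}{2}(N')^2$, and then $q \equiv 1$ and $\binom{q}{2} = q(q-1)/2 \equiv 0$ (since $q-1 \equiv 0$ and $l$ is odd) give $\Sigma^q \equiv \Sigma$. So your conclusion is correct but the justification as written is wrong and should be repaired. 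Note also that this reduction is only valid modulo the ideal generated by the other equations, not term-by-term — you handle this, whereas the paper's ``we obtain exactly the equations of Theorem~\ref{thm:X-ideal-sl3}'' is a bit terse on the point.

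Your extra step of converting between $N' = \Sigma - I$ and $N = \log\Sigma$ is a reasonable thing to worry about (the nilpotent parameter in Section~2.1 is $\log\Sigma$), but the paper simply takes $N = \Sigma - I$ in the proof of this corollary, and since the generators of Theorem~\ref{thm:X-ideal-sl3} are (after reduction mod the nilpotency relations) manifestly preserved under the triangular polynomial change of variable $N' \mapsto N' - (N')^2/2$, both conventions lead to the same ideal. Your verification of this is gestured at rather than carried out, but the idea is sound. Modulo the binomial-coefficient slip, the proof works.
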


\begin{proof}
    The statement for $\Xc_{\St}$ follows from that for $\Xc$ and the discussion following Assumption~\ref{ass:nlq}. 

    To show that $\Xc$ is cut out by the given equations in $GL_3 \times GL_3$, we need only check this after
    $\otimes_{\Oc} \FF$ and $\otimes_\Oc E$. After $\otimes \FF$, and writing $\Phi = I + M$, $\Sigma = I + N$, we
    obtain exactly the equations in Theorem~\ref{thm:X-ideal-sl3}, as required.

    By Proposition~\ref{prop:eqns-generic}, the first three
    equations alone already generate $\Xc\otimes_\Oc E$, and that this is a smooth variety. Since every point of
    $\Xc(\bar{E})$ has $\Phi \sim \diag(q^2,q,1)$, and the fourth, fifth, and sixth equations are easily seen to hold at
    such points, we have that these equations are also contained in the defining ideal of $\Xc \otimes_\Oc E$.
\end{proof}

\section{The Weil divisor class group}
\label{sec:class-group}

We compute the Weil divisor class group of $X$ and the class of the canonical divisor. For $n = 2$ this gives
another perspective on the calculations of \cite{manning}. One might hope for similar automorphic applications for $n=3$,
but there appear to be issues caused by the failure of the natural pairing on spaces of automorphic forms to be Hecke-equivariant (we thank Jeff
Manning for explaining this point to us).

\subsection{Generalities on the class group}

Let $X$ be a Noetherian, integral, normal scheme. Recall from \cite[\href{https://stacks.math.columbia.edu/tag/0AVT}{Section 0AVT}]{stacks-project} that a coherent sheaf $\Fc$ on $X$ is \emph{reflexive} if the canonical map $\Fc \to (\Fc^\vee)^\vee$ is an isomorphism, where $\Fc^\vee = \Homf_X(\Fc,\Oc_X)$.
 A reflexive sheaf on $X$ of generic rank one is called \emph{divisorial}. The set of isomorphism classes of divisorial sheaves on $X$ forms a group that, by \cite[\href{https://stacks.math.columbia.edu/tag/0EBK}{Section 0EBK}]{stacks-project}, is isomorphic to the Weil divisor class group 
$\Cl(X)$. If $D$ is a Weil divisor on $X$ then we write $\Oc(D)$ for the associated divisorial sheaf.

If $j : U \into X$ is the inclusion of an open subscheme such that $X \setminus U$ has codimension 2, then $j^*$ and $j_*$ define quasi-inverse equivalences
of categories between the set of reflexive sheaves on $U$ and on $X$, so \[\Cl(U) \cong \Cl(X).\]
If $X$ is, in addition, Cohen--Macaulay, then the canonical sheaf $\omega_X$ is divisorial by \cite[\href{https://stacks.math.columbia.edu/tag/0AY6}{Lemma 0AY6}, \href{https://stacks.math.columbia.edu/tag/0AWN}{Lemma 0AWN}]{stacks-project} and we have $\omega_X = j_*\omega_U$. 


\begin{lemma}\label{lem:weil-class} Let $f : E\rightarrow F$ be a morphism of normal integral schemes of finite
    type over a field $k$, and suppose that $f$ is faithfully flat with integral fibres. Let $K$ be the function field
    of $F$ and let $C = E_K$ be the generic fibre of $f$. Then there is an exact sequence
    \[\Gamma(C,\Oc_C^{\times})/K^\times\xrightarrow{\delta} \Cl(F)\rightarrow \Cl(E)\rightarrow \Cl(C)\rightarrow 0.\]
\end{lemma}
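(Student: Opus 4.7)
The plan is to construct the three maps in the sequence and then verify exactness at each position. The map $\Cl(F) \to \Cl(E)$ is pullback of divisor classes (well-defined because $f$ is flat and both schemes are normal, so one may pull back Weil divisors as closures of pullbacks from a codimension-one open subset, or equivalently pull back divisorial sheaves and reflexivize); the map $\Cl(E) \to \Cl(C)$ is restriction along the open immersion $C \hookrightarrow E$; and $\delta$ sends a unit $u \in \Gamma(C,\Oc_C^\times)$ to a class on $F$ defined below.

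To construct $\delta$, I would view $u$ as an element of $K(E)^\times$ via the identification $K(E) = K(C)$ (since $C \subset E$ is open and dense). The divisor $\mathrm{div}_E(u)$ is supported on $E \setminus C$. Because $f$ is flat of finite type between Noetherian integral schemes, every fibre over a codimension-one point of $F$ is pure of codimension one in $E$; conversely, every codimension-one prime of $E$ lying in $E \setminus C$ maps to a codimension-one prime of $F$ and is contained in that fibre. By the hypothesis that fibres are integral, each fibre $E_p$ over $p \in F^{(1)}$ is a prime Weil divisor on $E$, so $\mathrm{div}_E(u) = f^*D$ for a unique Weil divisor $D$ on $F$, and I set $\delta(u) = [D]$. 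If $u \in K^\times$, then $u$ is pulled back from $F$ and $\mathrm{div}_E(u) = f^*\mathrm{div}_F(u)$, so $\delta$ descends to $\Gamma(C,\Oc_C^\times)/K^\times$.

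Surjectivity of $\Cl(E) \to \Cl(C)$ is clear: any prime divisor on $C$ has closure in $E$ a prime divisor (as the inclusion $C \hookrightarrow E$ is open and dense), which restricts back. For exactness at $\Cl(E)$, a class vanishes on $C$ iff it has a representative supported on $E \setminus C$; by the codimension analysis above such a representative has the form $f^*D$ for $D$ a Weil divisor on $F$, giving a lift to $\Cl(F)$. For exactness at $\Cl(F)$, if $f^*[D] = 0$ in $\Cl(E)$ then $f^*D = \mathrm{div}_E(g)$ for some $g \in K(E)^\times$; the support of this divisor misses $C$, so $g|_C \in \Gamma(C,\Oc_C^\times)$, and $\delta(g|_C) = [D]$ by construction. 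Conversely, for $u \in \Gamma(C,\Oc_C^\times)$ one has $f^*\delta(u) = [\mathrm{div}_E(u)] = 0$ in $\Cl(E)$.

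The main technical obstacle is the identification of the codimension-one components of $E \setminus C$ with the fibres $E_p$ over $p \in F^{(1)}$. This uses the flatness of $f$ (to ensure the fibres have the expected dimension and that no codimension-one prime of $E$ other than these contributes to $E \setminus C$) together with the integrality of fibres (to guarantee each such fibre is a single prime divisor with multiplicity one, so the coefficients of $\mathrm{div}_E(u)$ along $E_p$ unambiguously define the coefficient of $D$ along $p$). Once this geometric input is in place, the remainder of the argument is a formal manipulation of Weil divisors modelled on the classical case of a product $X \times Y \to Y$.
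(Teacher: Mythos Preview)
Your argument is correct and matches the paper's approach: the paper simply cites Fossum, \emph{Picard groups of algebraic fibre spaces}, Proposition~1.1 (noting that normal integral schemes of finite type over a field are Krull) and gives exactly your description of $\delta$. One minor point of language: $C = E_K \hookrightarrow E$ is not literally an open immersion (the generic point of $F$ is not open unless $\dim F = 0$), but since $C$ is obtained from $E$ by localizing at $\eta_F$, height-one primes of $C$ correspond bijectively to height-one primes of $E$ dominating $F$, and all of your divisor-theoretic arguments go through unchanged.
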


\begin{proof} This is \cite{fossumPicardGroupsAlgebraic1973} Proposition~1.1, noting that a normal integral scheme of
finite type over a field is Krull.  

The maps $\Cl(F) \to \Cl(E)$ and $\Cl(E) \to \Cl(C)$ are the functorial maps (see
\cite{fossumPicardGroupsAlgebraic1973}). The map $\delta$ in the exact sequence is constructed as follows. Let $u \in
\Gamma(C, \Oc_C^\times)$. Then $u$ determines an element of the function field of $E$ and hence a divisor $\Div(u)$ on
$E$, which one can show is the pullback of a divisor $D$ on $F$. We define $\delta(u) = \Oc(D) \in \Cl(X)$.
\end{proof}

\begin{lemma}\label{lem:linebunpic} Suppose that $F$ is as in Lemma~\ref{lem:weil-class}, that $f : \Lc \to
F$ is a line bundle, and that $E$ is the complement of the zero-section of $f$. 

Choose an isomorphism $\Lc_K \to \Spec K[t]$, so that $t$ is a generator of \[\Gamma(E_K, \Oc_{E_K}^\times)/K^\times.\] Then \[\delta(t) = \Lc^{-1}\] where $\delta$
is the map from Lemma~\ref{lem:weil-class}. 
\end{lemma}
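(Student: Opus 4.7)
The strategy is to compute $\Div_E(t)$ locally by trivializing $\Lc$, and then read off the class directly from the resulting Čech cocycle.

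First I would choose an open cover $\{U_i\}$ of $F$ over which $\Lc$ admits trivializations, giving fiber coordinates $t_i$ (equivalently, nowhere-vanishing local sections of $\Lc^{\vee}$) with transition relations $t_i = h_{ij} t_j$ on $U_i \cap U_j$, so that $\{h_{ij}\}$ is the cocycle of $\Lc^{-1} = \Lc^{\vee}$. Over the generic point, both $t$ and $t_i$ are non-vanishing linear coordinates on $\Lc_K$, so there exist $g_i \in K^\times$ (here $K = K(F)$) with $t = g_i t_i$; the relation $g_i t_i = g_j t_j$ gives $g_i/g_j = h_{ij}^{-1}$ on $U_i \cap U_j$.

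Next I would compute $\Div_E(t)$ on the open piece $E|_{U_i} = f^{-1}(U_i) \setminus (\text{zero section})$. On $f^{-1}(U_i) \cong U_i \times \AA^1$, the function $t_i$ is the fiber coordinate, which is invertible once we remove the zero section. Hence on $E|_{U_i}$, the equation $t = g_i t_i$ shows that $\Div_E(t)|_{E|_{U_i}} = f^*\Div_{U_i}(g_i)$. By the construction of $\delta$ in Lemma~\ref{lem:weil-class}, this means $\delta(t) = [\Oc(D)]$ where $D$ is the Weil (in fact Cartier) divisor on $F$ locally given by $\Div(g_i)$ on $U_i$.

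Finally I would identify the line bundle $\Oc(D)$. The sheaf $\Oc(D)|_{U_i}$ is generated by $g_i^{-1}$, and the transition from the $U_j$-generator to the $U_i$-generator is $g_i^{-1}/g_j^{-1} = g_j/g_i = h_{ij}$. Thus $\Oc(D)$ has cocycle $\{h_{ij}\}$, which is exactly the cocycle of $\Lc^{-1}$; therefore $\delta(t) = \Lc^{-1}$ in $\Cl(F)$.

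The computation is essentially bookkeeping, so the only real pitfall is convention: whether the sheaf $\Lc$ is the sheaf of sections of the total space (so that fiber coordinates are sections of $\Lc^{\vee}$) or its dual. Using the standard convention $\mathbb{V}(\Lc^{\vee}) = \underline{\Spec}\,\Sym(\Lc)$ — so the fiber coordinates lie in $\Lc^{\vee}$ — gives the answer $\Lc^{-1}$ as stated. This is the only step I would double-check carefully; the rest is a short Čech calculation.
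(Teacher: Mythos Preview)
Your argument is correct and follows essentially the same idea as the paper's proof: trivialize $\Lc$ locally, write $t$ as a unit times the local fibre coordinate, and read off $\delta(t)$ from the resulting change-of-coordinate functions. The only difference is cosmetic: the paper localizes at the generic point of each prime divisor and records valuations (Weil-divisor bookkeeping), whereas you fix an open cover and track the \v{C}ech cocycle (Cartier/Picard bookkeeping); your explicit flagging of the sign/convention issue is appropriate, as that is indeed the only place one can slip.
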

\begin{proof}
    If $D\subset F$ is a closed integral subscheme of codimension 1 with generic point $d$, and if we choose a trivialisation $\Lc_d \cong \Spec \Oc_{F,d}[s]$ such that $s = 0$ is the zero section
    then we have 
    \[t = \kappa_D s\]
    for some $\kappa_D \in K$. We have that $v_d(\kappa_D) = 0$ for all but finitely many $D$ and, by definition, 
    \[\delta(t) = \Oc\left(\sum_{D}v_d(\kappa_D)D\right).\]

    On the other hand, regard $\Lc$ as a sheaf on $X$ that is locally free of rank 1, and let $t^\vee \in \Gamma(\Spec(K), \Lc_K)$ correspond to the section $t^\vee : \Spec(K) \to \Spec K[t]$ such that $(t^\vee)^*(t) = 1$. Similarly, we have $s^\vee \in \Gamma(X_{d}, \Lc_d)$, and the relation $t^\vee = \kappa_D^{-1} s^\vee$ for each $D$ as above. Then we have 
    \[\Lc = \Oc\left(\sum_D v_d(\kappa_D^{-1})D\right),\]
    from which the result follows.
\end{proof}

\subsection{Calculation of $\Cl(X)$}

Recall that we have the diagram 
\[\begin{tikzcd}
 Y \arrow[d,"\pi"] \arrow[r,"f"] & X  \\
 F \end{tikzcd}\] where $Y$ is an irreducible variety, the map $f$ is projective and $\pi$ is a fibre bundle with fibres
 isomorphic to 
 \[C(\nf) = \{M, N \in \nf : [M, N] = 0\},\] by (the proof of) Lemma~\ref{lem:ZYproperties} part~(2). Let $U\subset X$
be the open subscheme of points $(M,N)\in X$ such that either $M$ or $N$ is regular nilpotent. If $V = f^{-1}(U)$ then
$f|_V : V \to U$ is an isomorphism, as in Lemma~\ref{lem:birational}, while $\pi|_V : V \to F$ is a fibre bundle with
fibres \[C(\nf)^{\reg} = \{M, N \in C(\nf) : \text{$M$ or $N$ regular}\}.\]
\begin{lemma}\label{lem:YtoF} \begin{enumerate}
    \item When $n = 2$ 
    \[C(\nf)^\reg\cong\AA^2\setminus\{(0,0)\},\] $Y \setminus V$ has codimension 2 in $Y$, and $X \setminus U$ has
    codimension 3 in $X$.
    \item When $n = 3$ 
    \[C(\nf)^\reg\cong\AA^2\setminus\{(0,0)\}\times \GG_m\times \AA^2,\]
    $Y \setminus V$ has codimension 1 in $Y$, and $X \setminus U$ has codimension 2 in $X$.
\end{enumerate}
\end{lemma}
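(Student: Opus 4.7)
My plan is to handle part~(1) by direct inspection and to reduce part~(2) to a concrete computation in $\AA^6$. For $n = 2$, $\nf$ is one-dimensional, so $C(\nf) = \nf \oplus \nf \cong \AA^2$ and the unique non-regular element of $\nf$ is $0$. Thus $C(\nf)^{\reg} = \AA^2 \setminus \{(0,0)\}$, and since $\pi : Y \to F \cong \PP^1$ is a fibre bundle with fibre $\AA^2$, the complement $Y \setminus V$ is the zero section of this bundle, of codimension $2$. The morphism $f$ contracts this entire zero section to $(0, 0) \in X$, so $X \setminus U = \{(0,0)\}$ has codimension $3 = \dim X$.

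For part~(2), I will work in the coordinates $(a, b, c, d, e, f)$ from the proof of Lemma~\ref{lem:ZYproperties}(2), in which $C(\nf) \subset \AA^6$ is the hypersurface $\{af - cd = 0\}$. An element $M \in \nf$ is regular nilpotent iff $M^2 \ne 0$, equivalently $ac \ne 0$, and similarly for $N$ with $df \ne 0$. The crucial observation will be that the equation $af - cd = 0$ is linear in the pair $(c, f)$ with coefficient vector $(a, d)$: if $(a, d) \ne (0, 0)$ then its solution set is the line $\{(c, f) = t(a, d) : t \in \AA^1\}$, while if $(a, d) = (0, 0)$ then automatically $ac = df = 0$, so $(M, N) \notin C(\nf)^{\reg}$. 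Under the resulting parametrization the regularity condition $ac \ne 0$ or $df \ne 0$ becomes $ta^2 \ne 0$ or $td^2 \ne 0$, which since $(a, d) \ne (0, 0)$ is simply $t \ne 0$. Hence $(M, N) \mapsto \bigl((a, d), t, (b, e)\bigr)$ defines an isomorphism $C(\nf)^{\reg} \isomto (\AA^2 \setminus \{0\}) \times \GG_m \times \AA^2$.

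The codimension of $Y \setminus V$ in $Y$ then equals that of $C(\nf) \setminus C(\nf)^{\reg}$ in $C(\nf)$. The complement is $\{af = cd,\, ac = 0,\, df = 0\} \subset \AA^6$, whose top-dimensional component is $\{a = d = 0\}$ of dimension $4$, giving codimension $1$ in $C(\nf)$ and hence in $Y$. To compute $\dim(X \setminus U)$, I will invoke Theorem~\ref{thm:X-ideal-sl3}: imposing $M^2 = 0$ and $N^2 = 0$ makes the trace equations and the conditions $M^2 N = MN^2 = 0$ automatic, so
\[X \setminus U = \{(M, N) \in \Nil_2 \times \Nil_2 : MN = NM\},\]
where $\Nil_2 = \{M \in \sla_3 : M^2 = 0\}$ is the closure of the subregular nilpotent orbit, of dimension $4$. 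For a standard subregular representative $M_0 \in \Nil_2$, a direct calculation of the centralizer in $\sla_3$ and the additional cut $N^2 = 0$ produces a two-dimensional subvariety $Z_{\sla_3}(M_0) \cap \Nil_2$; since all nonzero elements of $\Nil_2$ are $\SL_3$-conjugate, this centralizer dimension is constant on $\Nil_2 \setminus \{0\}$, yielding $\dim(X \setminus U) = 4 + 2 = 6$, i.e.\ codimension $2$ in $X$.

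The main obstacle is spotting the structural parametrization of $C(\nf)^{\reg}$ as an $\AA^2 \times \GG_m$-bundle over $\AA^2 \setminus \{0\}$; once this is in hand the remaining claims reduce to bookkeeping with the explicit equations supplied by Theorem~\ref{thm:X-ideal-sl3} and a short centralizer calculation at a single subregular representative.
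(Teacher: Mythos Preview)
Your argument is correct and follows essentially the same route as the paper: the parametrization $(a,d,t,b,e)\mapsto (a,b,ta,d,e,td)$ is exactly the paper's isomorphism (with $t=\lambda$), and the fibre-dimension count over a subregular $M_0$ is the same. The one difference is that you invoke Theorem~\ref{thm:X-ideal-sl3} to identify $X\setminus U$ with commuting pairs in $\Nil_2\times\Nil_2$; the paper instead avoids the explicit equations by observing that any candidate $N$ (nilpotent, commuting with $M_0$, $N^2=0$) already lies in the standard $\nf$ together with $M_0$, so $(M_0,N)$ lifts to $Y$ and hence lies in $X$ --- this gives the same fibre without appealing to the full generating set for $J$. (A minor point: the complement $C(\nf)\setminus C(\nf)^{\reg}$ has \emph{two} top-dimensional components, $\{a=d=0\}$ and $\{c=f=0\}$, not one; your dimension conclusion is unaffected.)
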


\begin{proof}
If $n = 2$, then $C(\nf) = \nf \times \nf \cong \AA^2$ and $C(\nf)^{\reg} = \AA^2 \setminus \{(0,0)\}$. Therefore
\[\codim(Y \setminus V \subset Y) = \codim(\{0,0\} \subset \AA^2) = 2.\] Now \[U = \pi(V) = \{(0,0)\} \in X \subset \gf
\times \gf\] and $X$ has dimension $3$, so $\codim(X\setminus U \subset X) = 3$. This proves part~(1).

Now suppose that $n = 3$. We write
\[C(\nf)=\left\{\left(\begin{pmatrix}
0 & a & b \\
0 & 0 & c \\
0 & 0 & 0
\end{pmatrix},\begin{pmatrix}
0 & d & e \\
0 & 0 & f \\
0 & 0 & 0
\end{pmatrix}
\right):af=cd\right\},\]
which has dimension 5, and see that 
\begin{align*}C(\nf) \setminus C(\nf)^{\reg} &= \VV(af-cd, ac,df) \\
&= \VV(a,d)\cup \VV(c,f).\end{align*}
Therefore \[\codim(Y \setminus V \subset Y) = \codim(C(\nf)\setminus C(\nf)^{\reg} \subset C(\nf)) = 1.\]
We have an isomorphism 
\begin{align*}
    \AA^2\setminus \{(0,0)\}\times \GG_m\times \AA^2&\rightarrow C(\nf)^{\reg} \\
    (a,d,\lambda,b, e)&\mapsto (a,b,c = \lambda a, d,e,f = \lambda d)
\end{align*}
with inverse defined by $\frac{c}{a}\mapsto \lambda$ when $a\neq 0$, and $\frac{f}{d}\mapsto \lambda$ when $d\neq 0$.

Finally, we compute the dimension of $X \setminus U$. Since the locus $M = N  = 0$
has codimension 8 in $X$, it is enough to compute the dimension of $\{(M, N) \in X \setminus U : M \ne 0\}$. The
projection map from this subset to $\{M \in \gf : M^2 = 0, M \ne 0\}$ is a $GL_3$-equivariant surjection. Its image is
the orbit in $\gf$ of the matrix $M_0 = \begin{pmatrix} 0 & 0 & 1 \\ 0 & 0 & 0 \\ 0 & 0 & 0 \end{pmatrix}$ whose centraliser
consists of all matrices of the form
\[\begin{pmatrix}
     a & b & c \\
     0 & d & e \\
     0 & 0 & a
\end{pmatrix}\]
and whose orbit in $\gf$ has dimension $4$ (by the orbit-stabiliser theorem). The fibre in $X\setminus U$ of $M_0$ is then 
\[\left\lbrace\begin{pmatrix} 0 & b & c \\
     0 & 0 & e \\
     0 & 0 & 0 \end{pmatrix}: be=0\right\rbrace\]
which has dimension $2$, and so \[\dim(X \setminus U) = 4 + 2 = 6 = \dim(X) - 2\]
as required.
\end{proof}

Part~(1) of the next theorem is \cite{manning} Proposition~3.14 part~(1), proved there using toric geometry.

\begin{theorem}
\begin{enumerate}
\item Suppose that $n = 2$. There is an isomorphism \[\nu : X^*(T) \to \Cl(X).\]
Hence $\Cl(X) \cong \Z$.

\item Suppose that $n = 3$. There is a surjective homomorphism $\nu : X^*(T) \to \Cl(X)$ with kernel generated by $3(L_1 + L_3)$.
Therefore
\[\Cl(X)\cong X^*(T)/\pres{3(L_1 + L_3)} \cong \Z \times \Z/3\Z.\]
\end{enumerate}
\end{theorem}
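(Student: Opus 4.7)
The plan is to use the fibration $\pi : Y \to F$ and the resolution $f : Y \to X$, applying Lemma~\ref{lem:weil-class} to $\pi$.

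For the generic fibre $Y_K \cong C(\nf) \otimes_\FF K$: when $n = 2$, $C(\nf) = \AA^2$ has trivial class group; when $n = 3$, $C(\nf)$ is (up to a trivial $\AA^2$ factor) the affine cone over the split Segre quadric $\PP^1 \times \PP^1$, whose class group is $\Z$, generated by the class of a ruling. Global units on $Y_K$ are only constants in both cases, so Lemma~\ref{lem:weil-class} gives $\Cl(Y) \cong \Cl(F)$ when $n = 2$, and a split exact sequence $0 \to \Cl(F) \to \Cl(Y) \to \Z \to 0$ when $n = 3$, with the $\Z$ factor lifted by the class $g$ of the divisor $D_{ac} := G \times^B (\VV(a,c) \cap C(\nf))$. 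Recall $\Cl(F) = X^*(T)$ via $\mu \mapsto [\Oc_F(\mu)]$, since $G = SL_n$ is semisimple and simply-connected.

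By Lemma~\ref{lem:YtoF} and the normality of $X$ (Theorem~\ref{thm:main}), we have $\Cl(X) = \Cl(U) = \Cl(V)$. For $n = 2$, $Y \setminus V$ is the zero section of $Y \to F$, of codimension $2$, so $\Cl(X) = \Cl(Y) \cong X^*(T) \cong \Z$, proving part~(1) via $\nu(\mu) = [\pi^*\Oc(\mu)]$. For $n = 3$, $Y \setminus V = D_{ad} \cup D_{cf}$ with $D_{ad} := G \times^B(\VV(a,d) \cap C(\nf))$ and $D_{cf}$ defined analogously; the localisation exact sequence then gives $\Cl(X) \cong \Cl(Y)/\pres{[D_{ad}], [D_{cf}]}$.

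To compute these classes, observe that each of the coordinates $a, c, d, f$ spans a one-dimensional $B$-subrepresentation of $(\nf^2)^*$ of weight $\beta, \alpha, \beta, \alpha$ respectively. (A direct verification: $[\nf, \nf] = \F e_{-\rho}$, so the action of $U$ on $a$ introduces only a contribution in the $b$-direction, which vanishes on $a$.) Each thus determines a $G$-equivariant morphism from $Z$ to an appropriate line bundle on $F$, and the associated zero divisor has class equal to the pullback of $\pm[\beta]$ or $\pm[\alpha]$ accordingly. A local-multiplicity computation at the generic points (using the equation $af = cd$ cutting out $Y$ in $Z$) gives $\Div(a)_Y = D_{ac} + D_{ad}$ and $\Div(c)_Y = D_{ac} + D_{cf}$, both reduced. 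Subtracting, $[D_{cf}] - [D_{ad}] = \pm\pi^*[\alpha - \beta]$ in $\Cl(Y)$.

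Modding out $[D_{ad}]$ and $[D_{cf}]$ in $\Cl(Y)$ therefore forces $\pi^*[\alpha - \beta] = 0$ in $\Cl(X)$, and the first relation determines $g$ in terms of $\pi^*[\beta]$. The map $\nu : X^*(T) \to \Cl(X)$, $\nu(\mu) = [\pi^*\Oc(\mu)]$, is consequently surjective with kernel generated by $\alpha - \beta$. Using $L_1 + L_2 + L_3 = 0$ in $X^*(T_{SL_3})$, one computes $\alpha - \beta = (L_1 - L_2) - (L_2 - L_3) = L_1 - 2L_2 + L_3 = 3(L_1 + L_3)$, yielding $\Cl(X) \cong X^*(T)/\pres{3(L_1 + L_3)} \cong \Z \oplus \Z/3\Z$. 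The most delicate step is the identification of the line bundle class of $\Div(a)_Z$ (and similarly $c$) via the $B$-equivariant structure; the rest is routine bookkeeping.
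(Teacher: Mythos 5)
Your proof is correct, but takes a genuinely different route for the case $n=3$. The paper applies Lemma~\ref{lem:weil-class} to the restriction $\pi|_V : V \to F$, whose generic fibre $C(\nf)^\reg_K$ is an open subscheme of $\AA^5_K$ with trivial class group; the exact sequence then collapses to $\Gamma(C(\nf)^\reg_K, \Oc^\times)/K^\times \xrightarrow{\delta} \Cl(F) \to \Cl(V) \to 0$, and the only computation is the image of $\delta$, which uses Lemma~\ref{lem:linebunpic} to identify $\delta(\lambda)$ with $\Oc(3(L_1+L_3))$ via the weight of the regular function $\lambda = c/a = f/d$. You instead apply Lemma~\ref{lem:weil-class} to the full bundle $\pi : Y \to F$, obtain $\Cl(Y)$ as the extension of $\Z = \Cl(\{af=cd\}_K)$ by $\Cl(F)$ split by $[D_{ac}]$, and then pass to $\Cl(V)$ via the localization sequence by killing $[D_{ad}]$ and $[D_{cf}]$, computed through the $B$-equivariant structure of the linear forms $a,c$ and the relations $\Div(a)_Y = D_{ac}+D_{ad}$, $\Div(c)_Y = D_{ac}+D_{cf}$. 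Both use the same geometric input (Lemma~\ref{lem:YtoF} and the quadric cone structure of $C(\nf)$), and the bookkeeping is equivalent. Your route has the advantage of avoiding Lemma~\ref{lem:linebunpic}, which is the slightly delicate step in the paper.

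There is one small gap: applying Lemma~\ref{lem:weil-class} to $\pi : Y\to F$ requires $Y$ to be a normal integral scheme, and you do not verify normality of $Y$ (the paper sidesteps this by working on $V$, which is smooth). This is routine --- $Y$ is Cohen--Macaulay by Lemma~\ref{lem:ZYproperties}, and the singular locus of the fibre $C(\nf)$ (the locus $a=c=d=f=0$) has codimension $3$, so $Y$ is regular in codimension one and hence normal by Serre's criterion --- but it should be stated. You should also note that the unresolved sign in $\pm[\beta]$, $\pm[\alpha]$ is harmless precisely because $\pres{\alpha-\beta} = \pres{\beta-\alpha}$, which you implicitly use.
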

\begin{proof}
The case $n = 2$ follows quickly from Lemma~\ref{lem:YtoF} part~(1): we have 
\[\Cl(X) = \Cl(U) = \Cl(V) = \Cl(Y) = \Cl(F)\]
since $X \setminus U$ has codimension 2 in $X$, $Y \setminus V$ has codimension 3 in $Y$, and $Y$ is a vector bundle over $F$ and so $\Cl(Y) \cong \Cl(F)$ (for example, by Lemma~\ref{lem:weil-class}). As is well-known, \[\Cl(F) = \Pic(F) = X^*(T) \cong \Z.\]

Suppose now that $n = 3$. By Lemma~\ref{lem:YtoF} part~(2), as the codimension of $X\setminus U$ in $X$ is 2, we have \[\Cl(X) = \Cl(U) = \Cl(V).\] Let $K$ be the function field of $F$. By Lemma~\ref{lem:weil-class} and Lemma~\ref{lem:YtoF} part~(2), we have an exact sequence
\[\Gamma(C(\nf)^\reg_K, \Oc_{C(\nf)^{\reg}_K}) \xrightarrow{\delta} \Cl(F) \to \Cl(V) \to \Cl(C(\nf)^\reg_K) \to 0.\]
Since $C(\nf)^\reg_K$ is an open subscheme of $\AA^5_K$, $\Cl(C(\nf)^\reg_K) = 0$. We have \[\Cl(F) = \Pic(F) = X^*(T) \cong \Z^2.\]
It remains to compute the image of $\delta$.

By Lemma~\ref{lem:YtoF}, we have a morphism $h : C(\nf)_K \to \Spec K[\lambda, \lambda^{-1}]$ that induces an isomorphism
\[\Gamma(C(\nf)^\reg_K, \Oc_{C(\nf)^{\reg}_K}) \isomto  \{\lambda^n : n \in \Z\}.\]
This morphism is $B$-equivariant if $\lambda = \frac{c}{a} = \frac{f}{d}$ is given weight $L_1 - L_2 - (L_2 - L_3) = 3(L_1 + L_3)$, and so we obtain a morphism 
\[V \to \Oc(-3(L_1 + L_3)) \setminus \{\text{zero section}\}\]
whose restriction to the generic fibres is $h$. We may therefore apply Lemma~\ref{lem:linebunpic} and deduce that 
\[\delta(\lambda) = \Oc(3(L_1 + L_3)).\]
The result follows.
\end{proof}
Finally, we compute the canonical sheaf of $X$ in terms of these isomorphisms. When $n = 2$, this recovers \cite{manning} Proposition~3.14 part~(2).
\begin{corollary}
\begin{enumerate}
    \item Suppose that $n = 2$. Then $\omega_X = \nu(2\rho)$. 
    \item Suppose that $n = 3$ and $l \ge 11$. Then $\omega_X=\nu(\rho)\in Cl(X)$.
\end{enumerate}
\end{corollary}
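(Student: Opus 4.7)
The plan is to combine the isomorphism $\omega_X \cong f_*\omega_Y$ supplied by Theorem~\ref{thm:resolution}(2) with the identification $\omega_Y \cong \Oc_Y(\rho) = \pi^*\Oc(\rho)$ coming from Proposition~\ref{prop:sheafcalcs}(4), and then to track the resulting class through the chain of identifications that defines $\nu$. Because $X$ is Cohen--Macaulay and reduced by Theorem~\ref{thm:main}, $\omega_X$ is divisorial and so genuinely represents a class in $\Cl(X)$ that can be compared against the image of $\nu$.

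First I would restrict to the open set $U \subset X$ of pairs $(M,N)$ with $M$ or $N$ regular nilpotent. By Lemma~\ref{lem:YtoF}, $X \setminus U$ has codimension $2$, so $\Cl(X) \isomto \Cl(U)$ and $[\omega_X]$ is carried to $[\omega_U]$. Since $f|_V : V \to U$ is an isomorphism (by Lemma~\ref{lem:birational} together with the definition of $V$), the class $[\omega_U]$ is carried under the induced isomorphism $\Cl(U) \cong \Cl(V)$ to $[\omega_Y|_V]$, which by Proposition~\ref{prop:sheafcalcs}(4) equals $[(\pi|_V)^*\Oc(\rho)]$. By construction of $\nu$ in the proof of the previous theorem, namely the pullback $\Pic(F) = X^*(T) \to \Cl(V) \cong \Cl(X)$ along $\pi|_V$ (followed by the quotient by $\delta(t) = 3(L_1 + L_3)$), this class is exactly $\nu(\rho)$.

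Part~(1), for $n = 2$, admits an identical treatment: there $Y = Z$, and Proposition~\ref{prop:sheafcalcs}(2) yields $\omega_Y = \omega_Z \cong \pi^*\Oc(2\rho)$, so the same bookkeeping gives $\omega_X = \nu(2\rho)$.

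I expect the only real subtlety to be precisely this bookkeeping: one must verify that restriction to a codimension-$2$ open, transport along the birational isomorphism $f|_V$, and pullback along the (flat, integral-fibred) projection $\pi|_V$ all respect the divisorial-sheaf interpretation of $\Cl$ that was set up in the generalities. Since every scheme in sight is normal and each map is either an open immersion with codimension-$2$ complement, an isomorphism, or a fibre bundle of the sort used in Lemma~\ref{lem:weil-class}, these compatibilities are formal; the whole argument is really just an assembly of ingredients already in place, once one notes that $\omega_X$, being reflexive, is determined by its restriction to any codimension-$2$-complement open.
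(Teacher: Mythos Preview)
Your proposal is correct and follows essentially the same route as the paper: both use $\omega_X \cong f_*\omega_Y$ from Theorem~\ref{thm:resolution}, the identification of $\omega_Y$ with $\pi^*\Oc(2\rho)$ (resp.\ $\pi^*\Oc(\rho)$) from Proposition~\ref{prop:sheafcalcs}, and then compare on the codimension-$2$-complement open $U \cong V$ where $\nu$ is defined via pullback along $\pi|_V$. The paper phrases the final step as an equality of $j^*$-restrictions of reflexive sheaves, while you phrase it as equality of classes under $\Cl(X) \cong \Cl(U) \cong \Cl(V)$; these are the same argument.
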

\begin{proof}
    In either case, by Theorem~\ref{thm:resolution}, we have that $\omega_X=f_*\omega_Y$. 

    If $n = 2$ then we have $\omega_Y = \pi^*\Oc(2\rho)$ by Proposition~\ref{prop:sheafcalcs}.
    We therefore have 
    \[\omega_X = f_*\pi^*\Oc(2\rho),\]
    but also 
    \[j^*\nu(2\rho) = f_*(\pi^*\Oc(2\rho)|_V) = j^*(f_*\pi^*\Oc(2\rho))\]
    as $f : V \to U$ is an isomorphism. Thus $\omega_X=  \nu(2\rho)$.
    
    If $n = 3$ we again apply Proposition~\ref{prop:sheafcalcs} to obtain 
    \[\omega_X = f_* \pi^*\Oc(\rho)\]
    but also 
    \[j^*\nu(\rho) = f_*(\pi^*\Oc(\rho)|_V) = j^*(f_*\pi^*\Oc(\rho))\]
    as in the case $n = 2$.
    Thus $\omega_X = \nu(\rho)$.
\end{proof}

\subsection{Speculation on patched modules and their multiplicity}
\label{sec:multiplicity-new}

Let $\iota : GL_3 \to GL_3$ be the involution $A \mapsto A^{-T}$, which induces involutions on $X$ and on $F$ that we also denote by $\iota$. On $X^*(T)$ this induces the involution interchanging $L_1$ with $-L_3$.

Let $D$ be a division algebra of degree $3$ over a local field $F$ with residue field of order $q$. One expects (as in \cite[Conjecture 4.5.1]{Zhu20}) a functor from the category of finitely generated
smooth $\FF$-representations of $D^\times$ to the derived category of coherent sheaves on $X$. In a global context, the completion of this functor at the origin of $X$ should be realised by a ``patching functor'' constructed using an appropriate unitary group. Applying this functor to $\ind_{F^\times\Oc_D^\times}^{D^\times}(\FF)$ we expect to obtain a coherent sheaf $\Mc$ on $X^\wedge_0$ with the following properties:
\begin{enumerate}
    \item $\Mc$ is maximal Cohen--Macaulay on $X^\wedge_0$ of generic rank one (and hence divisorial);
    \item $\iota$-self-duality: $\Homf_{X^\wedge_0}(\Mc, \omega_{X^\wedge_0}) \cong \iota^*\Mc$.
\end{enumerate}

In \cite{manning}, it was shown (in the patching context) that these properties characterise $\Mc$ in the case $n = 2$, and this was used to prove a new global multiplicity 2 result for mod $l$ automorphic forms. For that it is necessary to pass to the completion of $X$, which is subtle in the context of the class group. As there are other, more serious, issues arising from the natural pairing on the patched module not being Hecke-equivariant, we don't pursue this and simply ask (for $n = 3$) to what extent the above properties characterise $\Mc$ on $X$.

\begin{proposition}\label{prop:self-dual-possiblities}
    Suppose that $n = 3$ and that $\Mc$ is a divisorial sheaf on $X$ with $\Mc = \nu(w)$ for $w\in X(T)$. Then $\Homf_X(\Mc, \omega_X) \cong \iota^*\Mc$ if and only if 
    \[w = L_1, -L_3, \text{or } 2L_1 + L_3.\]
\end{proposition}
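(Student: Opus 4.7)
The plan is to translate the geometric self-duality condition into a linear equation in $X^*(T)/\pres{3(L_1+L_3)}$ via the isomorphism $\nu$, and then enumerate its solutions. Since $\Mc = \nu(w)$ is divisorial (hence reflexive of rank one), standard identifications of divisorial sheaves with Weil divisor classes give
\[\Homf_X(\Mc,\omega_X) \cong (\Mc^\vee \otimes \omega_X)^{\vee\vee} \cong \nu(\rho-w),\]
using the earlier computation $\omega_X = \nu(\rho)$ and the fact that tensor product (followed by reflexive hull) of divisorial sheaves corresponds to addition in $\Cl(X)$.

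Next I would verify that $\iota^*\nu(w) = \nu(\iota^* w)$, where $\iota^*$ acts on $X^*(T)$ by $L_1 \leftrightarrow -L_3$ (equivalently $-w_0$, fixing $L_2 \mapsto -L_2$). The point is that $\iota$ extends canonically to $Y$ (and to $F$) in a way compatible with $\pi : V \to F$ and the construction of $\nu$, so pullback of divisorial sheaves along $\iota$ matches the induced action on $\Cl(F) \cong X^*(T)$ modulo the relation $3(L_1+L_3)$. Granting this, the self-duality condition becomes
\[w + \iota^* w \equiv \rho \pmod{\pres{3(L_1+L_3)}}\]
in $X^*(T)$.

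Finally, writing $w = aL_1 + bL_3$ in the basis $\{L_1, L_3\}$ of $X^*(T)$, we have $\iota^*w = -bL_1 - aL_3$, so $w + \iota^*w = (a-b)(L_1 - L_3) = (a-b)\rho$. The condition reduces to $(a-b-1)\rho \in \Z\cdot 3(L_1+L_3)$. Since $\rho = L_1 - L_3$ and $L_1 + L_3$ are linearly independent in $X^*(T)\otimes\Q$, this forces $a-b = 1$ (and the multiple of $3(L_1+L_3)$ to be zero). The solution set is the coset $L_1 + \Z(L_1+L_3)$, which reduces modulo $3(L_1+L_3)$ to exactly three classes, represented by $L_1$, $L_1 - (L_1+L_3) = -L_3$, and $L_1 + (L_1+L_3) = 2L_1 + L_3$. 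A direct check confirms all three satisfy the self-duality condition, completing both directions.

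The main obstacle is the compatibility $\iota^*\nu(w) = \nu(\iota^* w)$: one must lift the involution $\iota$ on $X$ to $Y$ (or at least to $V$) so that it commutes with $\pi$ up to the indicated involution of $F$, and then trace through the construction of $\nu$ from Lemma~\ref{lem:linebunpic} and the connecting map $\delta$. The rest is linear algebra.
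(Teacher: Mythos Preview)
Your proof is correct and follows essentially the same route as the paper: translate the self-duality condition via $\nu$ into the equation $w + \iota^* w \equiv \rho$ in $X^*(T)/\pres{3(L_1+L_3)}$, using $\omega_X = \nu(\rho)$ and the $\iota$-equivariance of $\nu$, and then solve linearly. The paper parametrises as $w = aL_1 - bL_3$ (yielding $a+b=1$) while you use $w = aL_1 + bL_3$ (yielding $a-b=1$), but this is only a sign convention; you are also slightly more explicit than the paper in justifying $\Homf_X(\Mc,\omega_X)\cong\nu(\rho-w)$ and in flagging that the $\iota$-equivariance of $\nu$ requires lifting $\iota$ compatibly to $V$ and $F$, which the paper asserts without argument.
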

\begin{proof}
    The map $\nu$ is easily seen to be $\iota$-equivariant. If $\Mc = \nu(aL_1 - bL_3)$ we have
    \[\Homf_X(\Mc, \omega_X) = \nu(L_1 - L_3 - (aL_1 - bL_3)\]
    and
    \[\iota^*\Mc = \nu(bL_1 - aL_3).\]
    These are equal if and only if 
    \[L_1 - L_3 = (a + b)(L_1 - L_3) + 3k(L_1 + L_3)\]
    for some $k \in \Z$. This holds if and only if $a + b = 1$ and $k = 0$. The different possibilities for $b \bmod 3$ give the result.
\end{proof}

We can explicitly describe $\nu(w)$ in these cases.

\begin{lemma}\label{lem:CM-criterion} Suppose that $w \in X^*(T)$ is such that $Rf_*\Oc(w)$ and $Rf_*(\rho - w)$ are each concentrated in degree zero. Then 
$f_*\Oc(w)$ is Cohen--Macaulay and
\[\nu(w) = f_*\Oc(w).\]
\end{lemma}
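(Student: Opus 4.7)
The plan is to apply Grothendieck duality to $f : Y \to X$ in order to convert the hypothesis on $Rf_*\Oc_Y(\rho - w)$ into the maximal Cohen--Macaulay property of $f_*\Oc_Y(w)$ on $X$, and then to identify this sheaf with $\nu(w)$ by restricting to the open locus $U \subset X$ over which $f$ is an isomorphism.

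First I would set up the duality. Both $X$ and $Y$ are Cohen--Macaulay of the same dimension (by Theorem~\ref{thm:resolution} and Lemma~\ref{lem:ZYproperties}) and $f$ is projective and birational, so $f^!\omega_X \cong \omega_Y$; combining this with Grothendieck duality and the identification $\omega_Y \cong \Oc_Y(\rho)$ from Proposition~\ref{prop:sheafcalcs} yields
\[R\Homf_X(Rf_*\Oc_Y(w), \omega_X) \cong Rf_*R\Homf_Y(\Oc_Y(w), \omega_Y) \cong Rf_*\Oc_Y(\rho - w).\]
By hypothesis, both $Rf_*\Oc_Y(w)$ and $Rf_*\Oc_Y(\rho - w)$ are concentrated in degree zero. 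Thus $\Extf^i_X(f_*\Oc_Y(w), \omega_X) = 0$ for every $i > 0$, which on the Cohen--Macaulay scheme $X$ (via local duality) is the standard criterion for $f_*\Oc_Y(w)$ to be maximal Cohen--Macaulay. Combined with the observation that $f_*\Oc_Y(w)$ has generic rank one (since $f$ is birational and $\Oc_Y(w)$ is a line bundle), and that an MCM sheaf on a normal scheme satisfies $S_2$ and is therefore reflexive, this shows that $f_*\Oc_Y(w)$ is divisorial.

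It then remains to identify $f_*\Oc_Y(w)$ with $\nu(w)$ in $\Cl(X)$. Over $U$, the map $f$ restricts to an isomorphism $V \isomto U$, and $\pi|_V : V \to F$ is a fibre bundle, so $f_*\Oc_Y(w)|_U$ is identified (via $(f|_V)^{-1}$) with the line bundle $\pi|_V^*\Oc_F(w)$ on $V$. Recalling the definition of $\nu$ through the chain of isomorphisms $\Pic(F) \to \Cl(V) \isomto \Cl(U) \isomto \Cl(X)$, this line bundle represents $\nu(w)|_U$. Since $X \setminus U$ has codimension at least two by Lemma~\ref{lem:YtoF}, and both $f_*\Oc_Y(w)$ and $\nu(w)$ are divisorial (hence equal to the $j_*$-extension of their restriction to $U$), the two sheaves agree globally.

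The hard part will be the Grothendieck duality step, specifically justifying the identification $f^!\omega_X \cong \omega_Y$. For CM varieties of the same dimension this should be formal from both sides being dualizing sheaves with the same cohomological shift, together with $Rf_*\omega_Y \cong \omega_X$ from Theorem~\ref{thm:resolution}, but one needs to be careful about whether to cite this via the formalism of dualizing complexes or to argue directly from the existence of a nonzero trace map followed by a reflexivity argument on the smooth locus.
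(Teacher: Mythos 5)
Your proof is correct and, after restriction to $U$, is verbatim the paper's identification of $f_*\Oc_Y(w)$ with $\nu(w)$; for the Cohen--Macaulay step the paper simply cites \cite{haconRationalityKawamataLog2019} Proposition~2.2, whose proof is exactly the Grothendieck-duality argument you sketch, so the two proofs are essentially the same with the black box opened. On the point you flag as the ``hard part'': the isomorphism $f^!\omega_X^\bullet \cong \omega_Y^\bullet$ of dualizing complexes is automatic from $(p\circ f)^! = f^! \circ p^!$ for the structure maps to $\Spec \F$, and since both $X$ and $Y$ are Cohen--Macaulay of the same dimension this untwists to $f^!\omega_X \cong \omega_Y[0]$ directly --- you do not need to invoke $Rf_*\omega_Y \cong \omega_X$ here (that isomorphism is an output of the duality calculation, not a needed input).
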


\begin{proof} That $f_*\Oc(w)$ is Cohen--Macaulay is \cite{haconRationalityKawamataLog2019} Proposition~2.2. It is therefore reflexive. The final claim follows as, if $j : U \into X$ is the natural inclusion, then $X \setminus U$ has codimension 2 and $j^*f_*\Oc(w) = j^*\nu(w)$ by construction. 
\end{proof}

\begin{proposition} Let $l \ge 5$ and $\lambda = L_1, -L_3$, or $2L_1 + L_3$ (as in Proposition~\ref{prop:self-dual-possiblities}). Then 
\[\nu(\lambda) = f_*\Oc(\lambda)\]
is Cohen--Macaulay.
\end{proposition}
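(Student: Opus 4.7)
The plan is to apply Lemma~\ref{lem:CM-criterion} in each of the three cases, so that the Cohen--Macaulayness of $f_*\Oc_Y(\lambda)$ and its equality with $\nu(\lambda)$ are delivered simultaneously. Since $X$ is affine (Theorem~\ref{thm:main} together with Proposition~\ref{prop:AeqB}), the concentration of $Rf_*\Oc_Y(w)$ in degree zero is equivalent to the vanishing $H^i(Y, \Oc_Y(w)) = 0$ for all $i > 0$. It therefore suffices to establish this vanishing for the six weights $w = \lambda$ and $w = \rho - \lambda$ arising from our three choices of $\lambda$.

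For $\lambda = L_1$ we have $\rho - \lambda = -L_3$, and for $\lambda = -L_3$ we have $\rho - \lambda = L_1$. All four of these weights lie in $X(T)_+$, so the required vanishing is immediate from Corollary~\ref{propYcohom}.

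For the remaining case $\lambda = 2L_1 + L_3$, we use the identity $L_1 + L_2 + L_3 = 0$ (since $G = \SL_3$) to rewrite
\[2L_1 + L_3 = L_1 - L_2 = \alpha, \qquad \rho - \lambda = 2L_1 + L_2 - 2L_1 - L_3 = L_2 - L_3 = \beta.\]
These weights are not dominant, so Corollary~\ref{propYcohom} does not apply directly; however, for each $\kappa \in \{\alpha, \beta\}$ we have the short exact sequence
\[0 \to \Ic_Y(\kappa) \to \Oc_Z(\kappa) \to \Oc_Y(\kappa) \to 0,\]
and by Proposition~\ref{prop:sheafcalcs} the ideal sheaf twists as $\Ic_Y(\kappa) \cong \Oc_Z(\rho + \kappa)$. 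Applying Lemma~\ref{lem:Zcohom} with the dominant weight $\lambda = 0$ gives $H^i(Z, \Oc_Z(\kappa)) = 0$ for $i > 0$, and applying it with the dominant weight $\lambda = \rho$ gives $H^i(Z, \Oc_Z(\rho + \kappa)) = 0$ for $i > 0$. The long exact sequence in cohomology then yields $H^i(Y, \Oc_Y(\kappa)) = 0$ for $i > 0$, completing the verification.

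There is no real obstacle here: the argument is entirely bookkeeping, using the two already-established inputs (Corollary~\ref{propYcohom} and Lemma~\ref{lem:Zcohom}) together with the identification $\Ic_Y \cong \Oc_Z(\rho)$ from Proposition~\ref{prop:sheafcalcs}. The one point worth emphasising is the arithmetic identification $2L_1 + L_3 = \alpha$ (and likewise $\rho - \lambda = \beta$) in the weight lattice of $\SL_3$, which is precisely what places the extra twist within the scope of the $+\alpha$ and $+\beta$ vanishings proved in Lemma~\ref{lem:Zcohom}.
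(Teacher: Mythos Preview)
Your proof is correct and follows the same strategy as the paper: verify the hypotheses of Lemma~\ref{lem:CM-criterion} by establishing the vanishing of $H^i(Y,\Oc_Y(\lambda))$ and $H^i(Y,\Oc_Y(\rho-\lambda))$ for $i>0$. The paper's own proof simply cites Corollary~\ref{propYcohom} for all three values of $\lambda$, which is slightly imprecise since that corollary is stated only for dominant weights; you have in effect supplied the missing line for $\lambda = 2L_1 + L_3 = \alpha$ and $\rho - \lambda = \beta$, by unwinding the proof of Corollary~\ref{propYcohom} (the short exact sequence $0\to\Ic_Y(\kappa)\to\Oc_Z(\kappa)\to\Oc_Y(\kappa)\to 0$ together with Lemma~\ref{lem:Zcohom}, which does cover the twists by $\alpha$ and $\beta$).
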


\begin{proof}
    By Proposition~\ref{propYcohom}, $H^i(Y, \Oc_Y(\lambda))$ and $H^i(Y, \Oc_Y(\rho - \lambda))$ vanish 
    for each of these values of $\lambda$ and for all $i > 0$. Since $X$ is affine, we obtain exactly the vanishing of $R^if_*$ required to apply Lemma~\ref{lem:CM-criterion} to these $\lambda$,
    and the Proposition follows. 
\end{proof}

Let $x \in X$ be the point $(0,0)$. If $\Mc$ is a coherent $\Oc_X$-module, then we call $\dim_\F(\Mc\otimes_{\Oc_x}\FF)$ the \emph{multiplicity} of $\Mc$ (at the origin). When $\Mc^\wedge_x$ arises as a patched module, this multiplicity will be the multiplicity of a system of Hecke eigenvalues in a certain space of mod $l$ automorphic forms, as in \cite{manning} (see also \cite[Section~4.1]{calegari-geraghty} when $l =p$). When $\Mc = \omega_X$, this number is known as the \emph{type} of the Cohen--Macaulay local ring $\Oc_{X,x}$.

\begin{proposition} Let $\lambda \in \{L_1, -L_3, 2L_1 + L_3, L_1 - L_3\}$ and let $l\ge 5$ with $l \ge 7$ if $\lambda = 2L_1 + L_3$. Let $m(\lambda)$ be the multiplicity of $\nu(\lambda) = f_*\Oc_Y(\lambda)$.
Then $m(\lambda)$ is as in Table~\ref{tab:multiplicities}.
\begin{table}[h]
\[
    \begin{array}{c|cccc}
         \lambda & L_1 & -L_3 & 2L_1 + L_3 & L_1 - L_3 \\
         \hline
         m(\lambda) & 3 & 3 & 16 & 8 
    \end{array}
\]
    \caption{Multiplicities of $\nu(\lambda)$.}
    \label{tab:multiplicities}
\end{table}
\end{proposition}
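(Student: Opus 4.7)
The plan is to realise each multiplicity as the dimension of a Tor group over $S = \Sym[\gf^2]$ and then read it off from the flag-variety cohomology calculations of Section~\ref{sec:cohom-calc}. Since the surjection $S \onto A$ sends $S_+$ onto the maximal ideal $\mathfrak{m}_x$ at the origin, and $M_\lambda := H^0(Y, \Oc_Y(\lambda))$ is a graded $A$- and hence $S$-module, one has
\[
m(\lambda) = \dim_\F M_\lambda / \mathfrak{m}_x M_\lambda = \dim_\F M_\lambda \otimes_S \F = \dim_\F \Tor_0^S(M_\lambda, \F).
\]
Proposition~\ref{prop:sheafcalcs} identifies $\Ic_Y \cong \Oc_Z(\rho)$ with the inclusion $\Ic_Y \into \Oc_Z$ given by a section of internal $S$-degree $2$; combining this with the vanishing $H^1(Z, \Oc_Z(\rho + \lambda)) = 0$ supplied by Lemma~\ref{lem:Zcohom} (valid in all four cases, after the shift to $\lambda + \alpha$ treated in Calculation~\ref{calc:alpha-twists}), one obtains a short exact sequence of graded $S$-modules
\[
0 \to \Mc_Z(\rho + \lambda)(-2) \to \Mc_Z(\lambda) \to M_\lambda \to 0,
\]
where $\Mc_Z(\mu) := H^0(Z, \Oc_Z(\mu))$ and $(-2)$ shifts internal grading by $2$.

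Apply Proposition~\ref{prop:snowden2.1.5}, which gives $\Tor_0^S(\Mc_Z(\mu), \F) \cong \bigoplus_i H^i(F, \Lambda^i[\bfrak \oplus \bfrak](\mu))[i]$ as graded vector spaces, so that internal $S$-degree coincides with cohomological degree. The tail of the induced long exact sequence in $\Tor^S(-,\F)$ reads
\[
\Tor_0^S(\Mc_Z(\rho+\lambda)(-2), \F) \xrightarrow{\phi_\lambda} \Tor_0^S(\Mc_Z(\lambda), \F) \to \Tor_0^S(M_\lambda, \F) \to 0,
\]
so $m(\lambda) = \dim_\F \mathrm{coker}(\phi_\lambda)$. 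The heart of the argument is the observation that in each of the four cases $\phi_\lambda$ vanishes for purely internal-degree reasons: being a degree-preserving map, it is forced to be zero whenever source and target live in disjoint internal degrees.

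For $\lambda \in \{L_1, -L_3, \rho\}$, both $\lambda$ and $\lambda + \rho$ are dominant, and Calculation~\ref{calc:twists} (applicable since $l \ge 5$ suffices for $H^1(\bfrak(\mu)) = 0$ at each of the six weights involved) forces only $i = 0$ to contribute to $\Tor_0^S$ of either $\Mc_Z(\lambda)$ or $\Mc_Z(\lambda + \rho)$. Hence the target sits in internal degree $0$, the source in degree $2$, and $\phi_\lambda = 0$; one reads off $m(\lambda) = \dim H^0(F, \Oc(\lambda)) = 3, 3, 8$ from Weyl's dimension formula. For $\lambda = 2L_1 + L_3 = \alpha$ (not dominant, but lying on a wall), Calculation~\ref{calc:alpha-twists} (whose hypotheses require $l \ge 7$) gives $H^i(\Lambda^i[\bfrak \oplus \bfrak](\alpha)) = 0$ for $i \ne 1$ and $H^1(\bfrak^2(\alpha)) \cong \gf^2$, so $\Tor_0^S(\Mc_Z(\alpha), \F)$ is $16$-dimensional in degree $1$; meanwhile $\alpha + \rho = 3L_1$ is dominant and produces a source concentrated in degree $2$, so $\phi_\alpha = 0$ and $m(2L_1 + L_3) = 16$.

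The main obstacle this approach sidesteps is the computation of $\Tor_1^S(M_\lambda, \F)$, which would be needed to control $\phi_\lambda$ if it were nonzero; the non-overlap of internal degrees in all four cases is the key simplification. The only non-routine input is the hypothesis $l \ge 7$ for $\lambda = 2L_1 + L_3$, which is precisely the characteristic bound required by Calculation~\ref{calc:alpha-twists}.
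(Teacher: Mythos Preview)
Your proof is correct and follows essentially the same strategy as the paper's: compute $m(\lambda)$ as $\dim \Tor_0^S(H^0(Y,\Oc_Y(\lambda)),\F)$, use the short exact sequence $0\to H^0(Z,\Ic_Y(\lambda))\to H^0(Z,\Oc_Z(\lambda))\to H^0(Y,\Oc_Y(\lambda))\to 0$, identify the middle term's $\Tor_0^S$ via Proposition~\ref{prop:snowden2.1.5} and the calculations of Section~\ref{sec:cohom-calc}, and then argue that the map $\phi_\lambda$ from the left term vanishes for internal-degree reasons.

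The only (minor) difference is that you apply Proposition~\ref{prop:snowden2.1.5} a second time to $\Mc_Z(\rho+\lambda)$ in order to pin its $\Tor_0^S$ in a single degree, whereas the paper observes directly that $\pi_*\Ic_Y(\lambda)$ is locally generated in degree $2$, so $H^0(Z,\Ic_Y(\lambda))$ is automatically generated as an $S$-module in degrees $\ge 2$; since $\Tor_0^S(\Mc_Z(\lambda),\F)$ is concentrated in degrees $\le 1$, the map $\phi_\lambda$ is forced to vanish without computing anything further about the source. Your extra computation is correct but unnecessary: the $(-2)$ shift alone, together with non-negativity of the grading on $\Mc_Z(\rho+\lambda)$, already places the source in degrees $\ge 2$.
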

\begin{proof} 
Let $\lambda$ be one of the weights occurring in the proposition and let $M = H^0(Z, \Oc_Z(\lambda))$ and $N = H^0(Y, \Oc_Y(\lambda))$. As in section~\ref{sec:equations}, by Lemma~\ref{lem:Zcohom} we are in the situation of Proposition~\ref{prop:snowden2.1.5} with $F$ the flag variety of $\GL_3$, the short exact sequence 
\[0 \to \xi \to \epsilon \to \eta \to 0\]
being 
\[0\to\bfrak^2\to \gf^2\to (\gf/\bfrak)^2\to 0,\]
and $\Lc = \Oc_F(\lambda)$. We define $S=\Sym[(\gf^{*})^2]$ as in Proposition~\ref{prop:snowden2.1.5}, and define 
\[R=H^0(Z,\Oc_Z)=H^0(F,\Sym[(\gf/\bfrak)^2])\] Thus (and recalling that, as in the proof of Theorem~\ref{thm:X-ideal-sl3}, $H^1(Z, \Ic_Y)$ vanishes) we have morphisms of graded algebras 
\[S\twoheadrightarrow R\twoheadrightarrow H^0(Y,\Oc_Y)\isomto H^0(X,\Oc_X)\]
which fit into the following diagram:
\[\begin{tikzcd}
	& {\Fc\times \Spec(S)} & {\Spec(S)} \\
	\Fc & Z & {\Spec(R)} \\
	& Y & X
	\arrow["f", from=3-2, to=3-3]
	\arrow[hook, from=3-2, to=2-2]
	\arrow[hook, from=2-3, to=1-3]
	\arrow[hook, from=2-2, to=1-2]
	\arrow[from=1-2, to=1-3]
	\arrow[from=2-2, to=2-3]
	\arrow[hook, from=3-3, to=2-3]
	\arrow[from=1-2, to=2-1]
	\arrow[from=2-2, to=2-1]
	\arrow["\pi", from=3-2, to=2-1]
\end{tikzcd}\]
Here, the schemes on the right are all affine, the schemes in the middle are all fibre bundles over $F$, and the vertical arrows are closed immersions.

Let $M = H^0(Z, \Oc(\lambda))$, a graded $S$-module. Then Proposition~\ref{prop:snowden2.1.5} applies and we have 
\[M\otimes_S \FF = \bigoplus_{i \ge 0} H^i(F, \Lambda^i(\bfrak^2)(\lambda))[i] = \begin{cases}V(\lambda)[0] & \text{if $\lambda \ne 2L_1 + L_3$} \\ (\gf \oplus \gf)[1] & \text{if $\lambda = 2L_1 + L_3$}\end{cases}\]
by Calculations~\ref{calc:twists} and~\ref{calc:alpha-twists}. In either case we see that $\dim(M\otimes_S\FF)$ is as given in Table~\ref{tab:multiplicities}.

By Proposition~\ref{propYcohom} we have 
\[H^i(Z, \Ic_Y(\lambda)) = H^i(Z, \Oc(\lambda + \rho)) = 0\] for $i > 0$. We therefore have a short exact sequence of graded $S$-modules
\[0 \to H^0(Z, \Ic_Y(\lambda)) \to H^0(Z, \Oc_Z(\lambda)) = M \to H^0(Y, \Oc_Y(\lambda))\to 0. \]
Since $\pi_*\Ic_Y(\lambda)$ is (locally on $F$) generated in degree 2, $H^0(Z, \Ic_Y(\lambda))$ is generated in degrees $\ge 2$. We have shown that $M$ is generated in degrees $\le 1$, and therefore
\[M \otimes_S \FF \to H^0(Y, \Oc_Y(\lambda))\otimes_S\FF\]
is an isomorphism. As 
\[\dim H^0(Y, \Oc_Y(\lambda))\otimes_S \FF = \dim H^0(X, \nu(\lambda)) \otimes_S\FF = m(\lambda),\]
we obtain the result.
\end{proof}

\bibliographystyle{amsalpha}
\bibliography{references.bib}
\end{document}